\documentclass[11pt]{article}
\usepackage{color,xcolor}
\usepackage{graphicx,mathrsfs}
\usepackage{amssymb,bbm}
\usepackage{amsthm}
\usepackage{amsmath}
\usepackage[margin=1in]{geometry}
\usepackage{caption,numbysec}
\usepackage{subcaption}
\usepackage{float}
\usepackage{mdframed,pdfsync,mathrsfs} 
\usepackage{esint}
\usepackage{dsfont}
\usepackage[normalem]{ulem}
\usepackage{verbatim}
\allowdisplaybreaks

\newtheorem{assumption}{Assumption}
\usepackage[colorlinks=true,pdfpagemode=UseNone,urlcolor=blue,linkcolor=blue,citecolor=blue,breaklinks=true]{hyperref}

\usepackage{cleveref}

\usepackage{enumitem}

\newtheorem{thm}{Theorem}[section]
\newtheorem{lemma}[theorem]{Lemma}
\newtheorem{proposition}[theorem]{Proposition}

\newtheorem{remark}[theorem]{Remark}

\newcommand{\bea}{\begin{eqnarray*}}
\newcommand{\eea}{\end{eqnarray*}}
\newcommand{\ben}{\begin{eqnarray}}
\newcommand{\een}{\end{eqnarray}}
\newcommand{\beq}{\begin{equation}}
\newcommand{\eeq}{\end{equation}}

\newcommand{\R}{\ensuremath{\mathbb{R}}}

\newcommand{\1}{\mathds{1}}
\newcommand{\bangle}[1]{\langle #1\rangle}
\newcommand{\muinf}{\mu_\infty}
\newcommand{\nuinf}{\nu_\infty}
\newcommand{\dotH}{\dot{H}}

\newcommand{\cX}{\mathcal{X}}
\newcommand{\cY}{\mathcal{Y}}

\newcommand{\cwp}{c_{\mathrm{WP}}}

\newcommand{\bal}{\begin{aligned}}
\newcommand{\enbal}{\end{aligned}}
\newcommand{\be}{\begin{equation}}
\newcommand{\ee}{\end{equation}}

\newcommand{\eps}{\varepsilon}

\renewcommand{\d}{\partial}

\newcommand{\dd}{\,\mathrm{d}}

\newcommand{\calR}{\mathcal{R}}

\begin{document}
\title{Local Algebraic Stability of Wasserstein Gradient Descent-Ascent and Mean Field Langevin Dynamics without Spectral Gap}
\author{Jing An\thanks{School of Mathematical Sciences, MOE-LSC, Shanghai Jiao Tong University, Shanghai, 200240, China (jingan1009@sjtu.edu.cn)}  \and Lihan Wang\thanks{ Department of Mathematics, National University of Singapore, 119076, Singapore (lihanw@nus.edu.sg)}}
\date{}
\maketitle
\numberbysection

\begin{abstract}
    We study the local stability of Wasserstein gradient descent-ascent (GDA) for entropy regularized two-player zero-sum games on noncompact domains. When one or both equilibrium measures fail to satisfy a Poincar\'e inequality, the spectral gap arguments underlying local exponential convergence are unavailable. we show that Wasserstein GDA converges algebraically to the mixed Nash equilibrium in chi-squared divergence, provided the initial relative density is a small perturbation in $L^2$ with bounded algebraic moments. The proof has two main ingredients: a cancellation in a negative Sobolev energy estimate, and a bootstrap argument that keeps the moments bounded uniformly in time. For polynomial-tailed equilibria, we construct perturbations supported far out in the tails that survive over diffusive time scales, which gives algebraic lower bounds on the worst-case relaxation rate. We also establish analogous local algebraic stability and lower bound results for mean-field Langevin dynamics, when the equilibrium fails to satisfy a spectral gap. 
\end{abstract}
\section{Introduction}
In this paper, we consider the long-time behavior of Wasserstein gradient descent-ascent (GDA) for entropy-regularized two-player zero-sum games \cite{hsieh2019finding, domingo2020mean} on noncompact domains. More precisely, given a game objective function $k$, the two players act simultaneously, one minimizing and the other maximizing $k$, with the goal of reaching a Nash equilibrium (or saddle point) of the min-max problem \[\min_{x\in \cX}\min_{y\in \cY} k(x,y).\]
Such games arise prominently in many areas of machine learning, including generative adversarial networks \cite{goodfellow2014generative}, multi-agent reinforcement learning \cite{busoniu2008comprehensive}, and distributionally robust learning \cite{sinha2017certifying, madry2017towards}. However, if one naively performs gradient descent in $x\in\cX$ and gradient ascent in $y\in\cY$, the Nash equilibrium may be difficult or even impossible to reach. This work follows the framework of \cite{domingo2020mean}, in which the players choose mixed strategies, modeled as probability measures rather than points, and an entropic regularization term is added. This leads to the following problem:
\begin{equation}\label{eq:MNE}
\begin{aligned}
    &\min_{\mu\in\mathcal{P}(\cX)} \max_{\nu\in\mathcal{P}(\cY)} E_{\beta} (\mu, \nu): =E(\mu, \nu) + \beta^{-1} H(\mu)- \beta^{-1} H(\nu)\\
    &\text{where}\quad E (\mu, \nu):=\int_{\cY} \int_{\cX} k(x,y) \dd\mu(x) \dd\nu(y) , \quad \text{and}~H(\mu):=\int_{\cX}\log \frac{\dd\mu}{\dd x} ~\dd\mu.
\end{aligned}
\end{equation}
In this framework, the saddle point, which we denote as $(\muinf,\nuinf)$ for the rest of this work, also known as a mixed Nash equilibrium (MNE), usually exists; in compact space, this is a consequence of Glicksberg's Theorem \cite{glicksberg1952further}, and is proved in \cite{conforti2023game, conger2024coupled} for general state space. In addition, by the optimality condition of $E_\beta$, we learn that the MNE should satisfy  
\begin{equation}\label{eqn:equilibrium}
    \log \mu_{\infty} = -\beta \int_{\cY}  k(x,y) \dd\nu_{\infty}(y)+\text{const},\quad \log \nu_{\infty} = \beta \int_{\cX}  k(x,y) \dd\mu_{\infty}(x)+\text{const}.
\end{equation}
To find the MNE of this regularized problem, it is natural to consider the gradient descent-ascent flow (GDA) under the Wasserstein metric, which takes the form
 \begin{equation}\label{eqn:GDA}
     \begin{aligned}
         &\d_t \mu_t = \nabla_x \cdot\Big(\mu_t \int_{\cY} \nabla_x k(x,y) \dd\nu_t(y)\Big)+ \beta^{-1}\Delta_x \mu_t\\
         &\d_t \nu_t = -\nabla_y \cdot\Big(\nu_t \int_{\cX} \nabla_yk(x,y) \dd\mu_t(x)   \Big)+ \beta^{-1}\Delta_y \nu_t.
     \end{aligned}
 \end{equation}
For the well-posedness of \eqref{eqn:GDA}, see \cite{domingo2020mean, conforti2023game} for probabilistic approaches, \cite{isobe2025gradient} for the evolution variational inequality formulation, and \cite[Appendix A.3]{seo2026local}.
 
 Wasserstein GDA (\ref{eqn:GDA}) has attracted considerable attention, and the central question is its long-time convergence behavior  \cite{wang2024open}; see \Cref{subsec:references} for a list of references. Local exponential convergence to the MNE, meaning that $(\mu_t,\nu_t)$ converges to $(\muinf,\nuinf)$ whenever the initial condition $(\mu_0,\nu_0)$ is already close to $(\muinf,\nuinf)$, was recently established in \cite{seo2026local, wang2026local}. The result of \cite{seo2026local} requires $\cX$ and $\cY$ to be compact, and gives exponential convergence in Wasserstein distance by exploiting the convexity-concavity of $E(\mu,\nu)$ near the MNE along Wasserstein geodesics. The result of \cite{wang2026local} is formulated in chi-squared divergence and essentially relies on a Poincar\'e inequality for both $\muinf$ and $\nuinf$. Both works embody the principle that strong coercivity of the local convex-concave structure at equilibrium yields local exponential stability for Wasserstein GDA. Global convergence, by contrast, can fail: \cite{mourrat2026oscillating} constructs an oscillatory solution in low temperature, following the companion result \cite{mourrat2026pl} in finite dimensions.

 However, the Poincar\'e inequality assumed in \cite{wang2026local} requires exponential integrability of both equilibria $\muinf$ and $\nuinf$ \cite{bakry_analysis_2014}, which may not be satisfied in practice, especially when the state spaces $\cX,\cY$ are unbounded. This motivates the following question: does the MNE remain asymptotically stable when one or both equilibrium measures fail to satisfy a Poincar\'e inequality, and what relaxation rates can then be expected? This roughly corresponds to a weak (as opposed to strong) coercivity of the energy landscape. The main assumption of our work is the so-called \emph{weighted Poincar\'e inequality} (see \eqref{eqn:cwpx}, \eqref{eqn:cwpy}), which quantitatively characterizes the aforementioned weak coercivity, and has been studied in \cite{Muckenhoupt1972, blanchet2007hardy, bobkov2009weighted}. Weighted Poincar\'e inequality can be used to study the algebraic relaxation of the Fokker--Planck equation, one of the simplest Wasserstein gradient flows, to its equilibrium, see \cite[Appendix B.2]{bouin2021hypocoercivity}. 

In \Cref{sec:wPIconv}, we establish local algebraic stability under weighted Poincar\'e inequalities and quantitative tail assumptions on the MNE. More precisely, let $f_t:=\frac{\dd\mu_t}{\dd\muinf}-1$ and $g_t:=\frac{\dd\nu_t}{\dd\nuinf}-1$ denote the relative density perturbations. If $(f_0,g_0)$ is small in $L^2$ and has bounded algebraic moments, we prove that $(f_t,g_t)$ converges to zero in $L^2$ at an algebraic rate, so that $(\mu_t,\nu_t)\to(\muinf,\nuinf)$. The proof combines a crucial cancellation in the $H^{-1}$ energy estimate observed in \cite{wang2026local} with weighted interpolation and a nonlinear bootstrap on moment bounds, which is also necessary for the Fokker--Planck equation \cite{bouin2021hypocoercivity}.

In \Cref{sec:lower-bound}, we prove lower bounds for polynomial-tailed equilibria. We construct localized, mean-zero perturbations far out in the tail and show that they relax slowly under the full coupled nonlinear evolution. For every sufficiently large observation time, there is an admissible perturbation that satisfies a quantitative algebraic lower bound over a comparable time interval. Hence the interaction between the two players does not eliminate the slow modes caused by weak confinement, and exponential convergence cannot hold uniformly over the class of admissible initial data.

Closely related to Wasserstein GDA is the mean-field Langevin dynamics (MFLD) \cite{hu2021mean}, given by
 \begin{equation}\label{eqn:MFLDorig}
     \partial_t \mu_t = \nabla_x \cdot (\mu_t \nabla F'(\mu_t))+\beta^{-1}\Delta_x \mu_t,
 \end{equation}
where $F'$ denotes the first variation of $F$. It is the Wasserstein gradient flow \cite{ambrosio2008gradient} of the entropy-regularized functional
 \begin{equation}\label{eqn:MFLDFtau}
     F_\tau(\mu)=F(\mu)+\beta^{-1} H( \mu).
 \end{equation}
 The local convergence behavior is also studied in \cite{wang2026local}, using mostly similar techniques as Wasserstein GDA. In \Cref{sec:MFLD}, we will follow the similar setting as in \cite{wang2026local} and study the local convergence behavior when Poincar\'e inequality is no longer satisfied by the equilibrium distribution.

%\textcolor{red}{JA: is this paragraph okay?} 
Lastly, for completeness, in Appendix \ref{sec:hyp} we explore the hypocoercive structure of the two-timescale Wasserstein GDA, which is an extension of \cite{an2025convergence} in analyzing hypocoercivity in finite dimensions. We construct modified Lyapunov energy in the $\dot{H}^{-1}$ norm in the different regimes of the learning-rate ratio, and control the resulting error terms using the stability results of \Cref{sec:wPIconv}. This shows how the skew-symmetric part of the linearized dynamics interacts with its symmetric part, and yields local algebraic convergence results that hold uniformly across all learning-rate ratios.

\subsection{Main Results}\label{subsec:theorems}
For clarity, we assume $\cX=\R^{d_x}, \, \cY=\R^{d_y}$, although our results can be generalized to unbounded manifolds with suitable modifications. We now state the assumptions on the kernel $k$ and, correspondingly, on the MNE $(\muinf,\nuinf)$.
\begin{assumption}\label{ass:wpi}
    The entropy regularized min-max problem \eqref{eq:MNE} has a unique MNE $(\muinf,\nuinf)$. Moreover, the kernel $k$ and the corresponding equilibrium distributions $\mu_\infty, \, \nu_\infty$ satisfy the following:
    \begin{itemize}
        \item The kernel $k\in C^2(\R^{d_x}\times \R^{d_y})$, and 
        \begin{equation} \label{eqn:bddk2}
            \|\nabla^2 k(x,y)\|_{L^\infty(\R^{d_x}\times \R^{d_y})} = M<\infty.
        \end{equation}
   %     By \eqref{eqn:equilibrium}, we have 
    %    \begin{equation}
     %       \|\nabla^2_x \log \mu_\infty(x)\|, \,  \|\nabla^2_y \log \nu_\infty(y)\|\le M.
      %  \end{equation}
       % We can therefore assume that there exists a constant $b\in \R$, such that %\textcolor{red}{Do we need assumption (2.7)? I guess (2.11) is strong enough}
     %   \begin{equation}
    %        \|\nabla_x \log \mu_\infty(x)\|\le M\bangle{x}+b, \qquad  \|\nabla_y \log \nu_\infty(y)\|\le M\bangle{y}+b.
   %     \end{equation}
         \item Weighted Poincar\'e inequality: there exist constants $\cwp^x,\, \cwp^y>0$ and exponents $\sigma_x,\, \sigma_y\in(0,1] $ such that for any $f,g$ with $\int f \dd \muinf(x)=0$ and $\int g\dd \nuinf(y)=0$,
        \begin{equation}\label{eqn:cwpx}
            \int \frac{f(x)^2}{\bangle{x}^{2\sigma_x}} \dd \mu_\infty(x) \le \frac{1}{\cwp^x}  \int|\nabla_x f(x)|^2\dd \mu_\infty(x),
        \end{equation}
        and 
         \begin{equation}\label{eqn:cwpy}
            \int \frac{g(y)^2}{\bangle{y}^{2\sigma_y}}\dd \nu_\infty(y) \le \frac{1}{\cwp^y}  \int|\nabla_y g(y)|^2\dd \nu_\infty(y).
        \end{equation}
      Here $\bangle{x}:=\sqrt{1+|x|^2}$, and we set $\sigma_* :=\max\{\sigma_x,\sigma_y\}$.
          \item Decay at infinity: there exists some $\alpha> 0$  such that 
          \begin{equation} \label{eqn:bddmomentorig}
            \int  \bangle{x}^\alpha \dd\mu_\infty(x) + \int  \bangle{y}^\alpha \dd\nu_\infty(y)<\infty. 
        \end{equation}
          In addition, there exist constants $a\ge 0, r_0\ge 0, p>\max\{d_x, d_y\}+\alpha$ such that 
        \begin{equation}\label{eqn:tailequil}
            x\cdot \nabla \log \muinf(x) \le a \1_{\{|x|\le r_0\}}-p, \qquad y\cdot \nabla \log \nuinf(y) \le a\1_{\{|y|\le r_0\}}-p.
        \end{equation}
    \end{itemize}
\end{assumption}

\begin{remark}
 % The assumptions \eqref{eqn:bddmomentorig} and \eqref{eqn:tailequil} are motivated by the benchmark example $\muinf(x) \propto \bangle{x}^{-p}$. For this example, \eqref{eqn:tailequil} holds with $a=0, \, r_0=1$, and $p>\max\{d_x,d_y\}+\alpha$ is required for \eqref{eqn:bddmomentorig} to hold. They also hold if $\muinf$ or $\nuinf$ decays faster than any algebraic function at infinity.
 Assumptions \eqref{eqn:bddmomentorig} and \eqref{eqn:tailequil} are motivated by the benchmark example $\muinf(x)\propto\bangle{x}^{-q}$ with $q>d_x+\alpha$. The condition $q>d_x+\alpha$ is exactly what makes \eqref{eqn:bddmomentorig} hold. For \eqref{eqn:tailequil}, a direct computation gives
\[
x\cdot\nabla\log\muinf(x)\propto-\frac{q|x|^2}{1+|x|^2}.
\]
Hence, for any $p\in(d_x+\alpha,\,q)$, \eqref{eqn:tailequil} holds with $a\ge 0$ and $r_0^2=p/(q-p)$. The same applies to $\nuinf$ with $d_y$ in place of $d_x$. Both assumptions also hold when $\muinf$ or $\nuinf$ decays faster than any algebraic function at infinity, for instance, Gaussian-type tails.
\end{remark}
\begin{remark}\label{rmk:easewpi}
The weighted Poincar\'e inequality \eqref{eqn:cwpx} holds for a broad class of densities. Two representative examples are:
\begin{itemize}
    \item If $\muinf(x)\sim\exp(-c\bangle{x}^{\theta})$ as $|x|\to\infty$ for some $c>0$ and $\theta\in(0,1)$, then \eqref{eqn:cwpx} holds with $\sigma_x=1-\theta$ (see~\cite{bouin2021hypocoercivity}), and \eqref{eqn:bddmomentorig} holds for every $\alpha\in(0,\infty)$.
    \item If $\muinf(x)\sim\bangle{x}^{-(d_x+q)}$ as $|x|\to\infty$ for some $q>0$, then \eqref{eqn:cwpx} holds with $\sigma_x=1$ (see~\cite{blanchet2007hardy, bobkov2009weighted}), and \eqref{eqn:bddmomentorig} holds for every $\alpha\in(0,q)$.
\end{itemize}
    The same statements hold for $\nuinf$ with $d_y$ and $\sigma_y$ in place of $d_x$ and $\sigma_x$. It is therefore easy to construct equilibrium distributions that satisfy Assumption~\ref{ass:wpi}. Let $k(x,y)=\beta^{-1}\big(U(x)-V(y)\big)$, where $U\in C^2(\R^{d_x})$ and $V\in C^2(\R^{d_y})$ have bounded second derivatives and satisfy
\[
U(x)= a_x\log\bangle{x}\quad (|x|\gg 1),\qquad V(y)= a_y\log\bangle{y}\quad (|y|\gg 1),
\]
for constants $a_x,a_y>0$ to be chosen. By \eqref{eqn:equilibrium}, the equilibrium is
\[
\muinf(x)\propto e^{-U(x)},\qquad \nuinf(y)\propto e^{-V(y)},
\]
so that $\muinf(x)\propto\bangle{x}^{-a_x}$ and $\nuinf(y)\propto\bangle{y}^{-a_y}$ for $|x|\gg 1$ and $|y|\gg 1$. Since $k$ is separable, this MNE is unique. If $a_x,a_y>\max\{d_x,d_y\}+\alpha$, then Assumption~\ref{ass:wpi} is satisfied, with $\sigma_x=\sigma_y=1$. In this case, neither $\muinf$ nor $\nuinf$ satisfies a standard Poincar\'e inequality, because polynomially decaying densities are not exponentially integrable.
\end{remark}
\begin{remark}
    Compared to \eqref{eqn:bddk2}, \cite{wang2026local} only assumes $\|\nabla_x\nabla_y k(x,y)\|_{L^\infty(\R^{d_x}\times \R^{d_y})} <\infty$. A careful check of our paper finds that all results in the main sections of our work hold under the weaker assumption. However, the strengthened assumption is required in the Appendix \ref{sec:hyp}. We therefore keep the stronger assumption for simplicity.
\end{remark}
We now present our first theorem, which establishes the local algebraic $\chi^2$-decay of $(\mu_t,\nu_t)$ to the MNE along the flow \eqref{eqn:GDA}.
\begin{thm}\label{thm:decay_upper_bd}
    Let $(\mu_t,\nu_t)$ be a solution of the Wasserstein GDA \eqref{eqn:GDA} with initial condition $(\mu_0,\nu_0)$, and define the relative density perturbations $f_t := \frac{\dd \mu_t}{\dd \muinf}-1$ and $g_t := \frac{\dd \nu_t}{\dd \nuinf}-1$. Suppose Assumption~\ref{ass:wpi} holds with $\alpha >4\sigma_*$, and fix an energy level $R>0$. Then there exist constants $\varepsilon>0$ and $C>0$, depending only on $\alpha,\, \sigma_*,\, R,\, \cwp^x,\, \cwp^y,\, \beta$, such that the following holds. If the initial perturbations satisfy
    \begin{equation} \label{eqn:bddmoment}
        \int f_0^2 \bangle{x}^\alpha \dd\muinf(x) +\int g_0^2 \bangle{y}^\alpha \dd\nuinf(y)<\frac{R}{2}, \qquad \text{and} \qquad \|f_0\|^2_{L^2(\muinf)}+\|g_0\|^2_{L^2(\nuinf)}\le \varepsilon,
    \end{equation}
    then the solution $(f_t,g_t)$ satisfies the uniform-in-time moment bound
    \begin{equation}\label{eqn:uniform-Ealpha-bound}
        \sup_{t\ge 0}\Big\{\int f_t^2 \bangle{x}^\alpha \dd\muinf(x) +\int g_t^2 \bangle{y}^\alpha \dd\nuinf(y)\Big\}\le R.
    \end{equation}
    Moreover, $(f_t,g_t) \to 0$ at an algebraic rate:
    \begin{equation}\label{eqn:At_upper_decay}
        \|f_t\|^2_{L^2(\muinf)}+\|g_t\|^2_{L^2(\nuinf)} \le C\big(Z_0^{-\frac{2\sigma_*}{\alpha-2\sigma_*}}+t\big)^{-\frac{\alpha-2\sigma_*}{2\sigma_*}}.
    \end{equation}
    Here $Z_0:= \|f_0\|^2_{\dotH^{-1}(\muinf)}+\|g_0\|^2_{\dotH^{-1}(\nuinf)}$ is finite by \eqref{eqn:bddmomentorig} together with the weighted Poincar\'e inequalities \eqref{eqn:cwpx} and \eqref{eqn:cwpy}.
\end{thm}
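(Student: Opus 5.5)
We will work with the relative densities $f_t,g_t$ and three nested energies: the negative Sobolev energy $Z_t:=\|f_t\|^2_{\dotH^{-1}(\muinf)}+\|g_t\|^2_{\dotH^{-1}(\nuinf)}$, the $L^2$ energy $A_t:=\|f_t\|^2_{L^2(\muinf)}+\|g_t\|^2_{L^2(\nuinf)}$, and the weighted moment $E_\alpha(t):=\int f_t^2\bangle{x}^\alpha\dd\muinf+\int g_t^2\bangle{y}^\alpha\dd\nuinf$. Here $\|f\|^2_{\dotH^{-1}(\muinf)}=\int|\nabla\phi|^2\dd\muinf$ with $-\nabla\cdot(\muinf\nabla\phi)=f\muinf$. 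In these variables \eqref{eqn:GDA} reads
\[
\d_t f=\beta^{-1}L_{\muinf}f+\text{(linear coupling in }g)+\text{(nonlinear)},
\]
where $L_{\muinf}=\Delta+\nabla\log\muinf\cdot\nabla$; the $g$-equation is analogous with opposite sign in the coupling. The argument is a continuity/bootstrap argument on the interval $[0,T^*)$ where $E_\alpha(t)\le R$ and $A_t\le 2\varepsilon$ (say), showing that both bounds in fact hold with strict improvement, so that $T^*=\infty$.

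\textbf{Step 1 ($\dotH^{-1}$ estimate).} Following \cite{wang2026local}, we differentiate $Z_t$. The diffusive part gives $-2\beta^{-1}A_t$. The linear cross terms $\iint \nabla_x\nabla_y k\,\nabla\phi\,\nabla\psi\,\dd\muinf\dd\nuinf$ appear with opposite signs in the two equations and cancel exactly. This is the crucial cancellation from the zero-sum (antisymmetric) structure. The remaining nonlinear terms are of the form $\int f\,\nabla\phi\cdot\int\nabla_x k\,g\dd\nuinf\,\dd\muinf$. Using \eqref{eqn:bddk2} (only the mixed derivative is needed), $\int\nabla_x k(x,y)g\dd\nuinf$ is bounded by $M\|g\|_{\dotH^{-1}}$ after integrating by parts in $y$. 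These terms are therefore bounded by $C\|f\|_{L^2}\|\nabla\phi\|_{L^2}\|g\|_{\dotH^{-1}}\le C A_t^{1/2}Z_t$. Next we need $Z\lesssim$ something times $A$, which we obtain by interpolation. Duality with \eqref{eqn:cwpx} gives $\|f\|_{\dotH^{-1}}^2\le (\cwp^x)^{-1}\int f^2\bangle{x}^{2\sigma_x}\dd\muinf$. By Hölder between the weights $\bangle{x}^0$ and $\bangle{x}^\alpha$,
\[
\int f^2\bangle{x}^{2\sigma}\le A^{1-2\sigma/\alpha}E_\alpha^{2\sigma/\alpha},
\]
so $Z\le C R^{2\sigma_*/\alpha}A^{1-2\sigma_*/\alpha}$, i.e. $A\ge c\,Z^{\alpha/(\alpha-2\sigma_*)}$. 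The perturbative term is absorbed whenever $CA^{1/2}Z\le\beta^{-1}A$, which holds on the bootstrap interval if $\varepsilon$ is small, again using the interpolation. This yields $\dot Z\le -cZ^{1+\frac{2\sigma_*}{\alpha-2\sigma_*}}$, hence
\[
Z_t\le C\big(Z_0^{-\frac{2\sigma_*}{\alpha-2\sigma_*}}+t\big)^{-\frac{\alpha-2\sigma_*}{2\sigma_*}}.
\]

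\textbf{Step 2 ($L^2$ estimate).} We differentiate $A_t$. We get $-2\beta^{-1}\int|\nabla f|^2$, plus a linear coupling term bounded by $C\|\nabla f\|\|g\|_{\dotH^{-1}}$, plus nonlinear terms bounded by $CA^{1/2}(\|\nabla f\|^2+\|\nabla g\|^2)$. The linear term is not cancelled in $L^2$. We handle it by Young's inequality, giving $\dot A\le -\beta^{-1}D+CZ$, where $D$ is the Dirichlet energy. We then combine this with the first step: either we consider $A+KZ$ for a large constant $K$, or we use $D\ge c\,A^{1+2\sigma_*/(\alpha-2\sigma_*)}E_\alpha^{-\dots}$ (weighted Poincaré plus interpolation) and a comparison ODE argument. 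Together with the decay of $Z$ this gives $A_t\lesssim(\cdots+t)^{-(\alpha-2\sigma_*)/(2\sigma_*)}$. The constant is expressed through $A_0\le\varepsilon$ and $Z_0\lesssim E_\alpha(0)$, which yields \eqref{eqn:At_upper_decay} and keeps $A_t\le 2\varepsilon$ with room to spare.

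\textbf{Step 3 (moment bootstrap; main obstacle).} We differentiate $E_\alpha$. The diffusion produces $-2\beta^{-1}\int|\nabla f|^2\bangle{x}^\alpha$ plus the commutator term $\beta^{-1}\int f^2 L_{\muinf}^*(\bangle{x}^\alpha)$-type contributions, namely $\int f^2(\Delta\bangle{x}^\alpha+\nabla\log\muinf\cdot\nabla\bangle{x}^\alpha)\dd\muinf$. By \eqref{eqn:tailequil}, $\nabla\log\muinf\cdot\nabla\bangle{x}^\alpha\le\alpha\bangle{x}^{\alpha-2}(a\1_{|x|\le r_0}-p)$, while $\Delta\bangle{x}^\alpha\lesssim\alpha(d+\alpha)\bangle{x}^{\alpha-2}$. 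The condition $p>d+\alpha$ makes this sum nonpositive outside a compact set, leaving only $C\int_{|x|\le r_0}f^2\le CA_t$. The coupling and nonlinear terms have drift $\int\nabla k\,g\dd\nuinf$, which grows at most like $\bangle{x}$. Integrated against $\nabla(\bangle{x}^\alpha)f^2$, we must control them by $\|g\|_{\dotH^{-1}}$ times the weighted Dirichlet energy plus $\sqrt{\varepsilon}\,E_\alpha$-terms. This is delicate because of the linear-in-$g$ term that lacks smallness, and it is where we expect the real difficulty. The plan is to bound it by $C Z_t^{1/2}E_\alpha^{1/2}(\int|\nabla f|^2\bangle{x}^\alpha)^{1/2}$ and absorb via Young, leaving $\dot E_\alpha\le C(A_t+Z_t)$. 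The remaining task is to show that $A_t+Z_t$ is integrable in time with small integral. This uses $\alpha>4\sigma_*$, so the decay exponent $(\alpha-2\sigma_*)/(2\sigma_*)>1$, and the smallness of $Z_0\lesssim\varepsilon^{\theta}R^{1-\theta}$ by interpolation. We then get $E_\alpha(t)\le R/2+C\varepsilon^{\theta'}<R$, which closes the bootstrap and gives \eqref{eqn:uniform-Ealpha-bound}.
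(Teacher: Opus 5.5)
Your overall architecture matches the paper's: the cancellation in the $\dotH^{-1}$ estimate, the interpolation $Z\lesssim R^{2\sigma_*/\alpha}A^{1-2\sigma_*/\alpha}$, the weighted moment estimate driven by \eqref{eqn:tailequil}, and a bootstrap. However, Step~2 has a genuine gap. Neither mechanism you propose yields \eqref{eqn:At_upper_decay}, whose exponent is the \emph{same} as that of $Z_t$.

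\begin{itemize}
\item The functional $A+KZ$ does not close. You get $\frac{\dd}{\dd t}(A+KZ)\le-\beta^{-1}B+CZ-cKA$, and without a spectral gap $Z$ is not controlled linearly by $A$ or by $B$. You only have $Z\lesssim A^{1-2\sigma_*/\alpha}$, which is much larger than $A$ when $A$ is small.
\item The comparison ODE $\dot A\le -cA^{q}+CZ_t$ has $q=1+2\sigma_*/\alpha$ at best from weighted Poincar\'e plus the moment bound. It only gives the quasi-static bound $A_t\lesssim Z_t^{1/q}$, i.e.\ decay like $t^{-\frac{\alpha-2\sigma_*}{2\sigma_*}\cdot\frac{\alpha}{\alpha+2\sigma_*}}$. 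This is strictly slower than the claimed rate.
\end{itemize}

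The missing idea is to use the integrated dissipation from Step~1. Once the nonlinear term is absorbed, you have $\dot Z\le -cA$ with $Z$ nonincreasing, so $\int_{t-1}^t A_s\dd s\le c^{-1}(Z_{t-1}-Z_t)$. Dropping $B$ in Step~2 gives $\dot A\le CZ_u\le CZ_{t-1}$ for $u\in[t-1,t]$. Hence $A_t\le A_s+CZ_{t-1}$ for every $s\in[t-1,t]$. Averaging over $s$ yields $A_t\le (c^{-1}+C)Z_{t-1}$, and combined with the decay of $Z$ this gives \eqref{eqn:At_upper_decay}. This averaging step is how the paper concludes. Alternatively, you could replace weighted Poincar\'e by the gap-free interpolation $A^2\le BZ$. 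This gives $\dot A\le -\beta^{-1}A^2/Z_{t-1}+CZ_{t-1}$ on $[t-1,t]$, and a Riccati comparison again yields $A_t\lesssim Z_{t-1}$.

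Three smaller points.

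\begin{itemize}
\item \textbf{Bootstrap threshold.} The hypothesis $A_t\le 2\varepsilon$ cannot be recovered from the available estimates. The only control is $A_t\le A_0+C\int_0^t Z$. Here $\int_0^\infty Z\lesssim Z_0^{(\alpha-4\sigma_*)/(\alpha-2\sigma_*)}$ and $Z_0\lesssim R^{2\sigma_*/\alpha}\varepsilon^{1-2\sigma_*/\alpha}$, so the increment is of order $\varepsilon^{(\alpha-4\sigma_*)/\alpha}\gg\varepsilon$. Either use a threshold depending only on $R$, or follow the paper. The paper bootstraps only on $E_\alpha\le R$ and absorbs $2MZ\sqrt{A}$ in the $Z$-inequality via smallness of $Z_t^{(\alpha-4\sigma_*)/(2(\alpha-2\sigma_*))}$. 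That smallness propagates because $Z$ is then decreasing.
\item \textbf{Integrability of $A$ in Step~3.} Time integrability of $A$ should come from $\int_0^\infty A\le c^{-1}Z_0$, not from a decay rate for $A$. With the rate your comparison argument actually delivers, you would need a condition stronger than $\alpha>4\sigma_*$.
\item \textbf{Weight-derivative terms in Step~3.} You need the \emph{strictly} negative term $-c\int f^2\bangle{x}^{\alpha-2}\dd\muinf$ that $p>d_x+\alpha$ provides, not mere nonpositivity. It is what absorbs the terms where the derivative falls on the weight, such as $\alpha\int f(1+f)\bangle{x}^{\alpha-2}x\cdot\nabla_x Kg\dd\muinf$. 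These terms are handled with $\|\nabla_xKg\|_{L^\infty}\le M\|g\|_{\dotH^{-1}(\nuinf)}$ together with $\int\bangle{x}^\alpha\dd\muinf<\infty$. Note that this drift bound is uniform in $x$ because $g$ has mean zero; it does not grow like $\bangle{x}$. If it did, the moment estimate would not close.
\end{itemize}
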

We briefly explain the strategy behind the proof of \Cref{thm:decay_upper_bd}; see also \cite[Appendix B]{bouin2021hypocoercivity}. As a toy model, suppose $\mu_t$ solves the Fokker--Planck equation, a benchmark example of a Wasserstein gradient flow \cite{jordan1998variational}:
\[
\partial_t \mu_t = \nabla_x\cdot\Big(\mu_t \nabla_x\log\frac{\mu_t}{\muinf}\Big).
\]
Let $f_t:=\frac{\dd\mu_t}{\dd\muinf}-1$, so that $\int f_t\,\dd\muinf=0$ for all $t$. If $\muinf$ satisfies a standard Poincar\'e inequality, then for any $f_0\in L^2(\muinf)$ the $L^2$ energy estimate directly gives exponential decay of $f_t$ in $L^2(\muinf)$. This is the same as exponential decay of $\chi^2(\mu_t\,\|\,\muinf)=\|f_t\|^2_{L^2(\muinf)}$, with the same rate.

If the Poincar\'e inequality fails for $\muinf$, we can still assume the weighted Poincar\'e inequality \eqref{eqn:cwpx}. In that case, however, $f_0\in L^2(\muinf)$ is no longer sufficient. Suppose in addition that we have a uniform-in-time moment bound
\begin{equation}\label{eqn:simplemomentest}
    \sup_{t\ge 0}\int  f_t^2 \bangle{x}^\alpha\,\dd\muinf(x)<\infty.
\end{equation}
Then H\"older's inequality gives
\[
\int f_t^2\,\dd\muinf \le \Big(\int \frac{f_t^2}{\bangle{x}^{2\sigma_x}}\,\dd\muinf\Big)^{\frac{\alpha}{\alpha+2\sigma_x}}\Big(\int f_t^2\bangle{x}^{\alpha}\,\dd\muinf\Big)^{\frac{2\sigma_x}{\alpha+2\sigma_x}}
\lesssim \Big(\int \frac{f_t^2}{\bangle{x}^{2\sigma_x}}\,\dd\muinf\Big)^{\frac{\alpha}{\alpha+2\sigma_x}}.
\]
Combining this with \eqref{eqn:cwpx}, the energy estimate becomes
\[
\frac{\dd}{\dd t}\frac12\int f_t^2\,\dd\muinf = -\int|\nabla_x f_t|^2\,\dd\muinf \stackrel{\eqref{eqn:cwpx}}{\lesssim} -\int\frac{f_t^2}{\bangle{x}^{2\sigma_x}}\,\dd\muinf \lesssim -\Big(\int f_t^2\,\dd\muinf\Big)^{\frac{\alpha+2\sigma_x}{\alpha}}.
\]
An ODE argument then yields $\|f_t\|^2_{L^2(\muinf)}\lesssim t^{-\frac{\alpha}{2\sigma_x}}$ as $t\to\infty$. For the Fokker--Planck equation, \eqref{eqn:simplemomentest} propagates in time once it holds initially. For the coupled GDA dynamics this is no longer automatic, so in the proof of \Cref{thm:decay_upper_bd} we add a bootstrap argument that handles the interplay between moment bounds such as \eqref{eqn:simplemomentest} and the $\dot H^{-1}$ and $L^2$ energy estimates of \cite{wang2026local}.

Appendix~\ref{sec:hyp} extends \Cref{thm:decay_upper_bd} to the two-timescale setting, with an emphasis on the underlying operator structure. Following the finite-dimensional hypocoercivity analysis of An and Lu~\cite{an2025convergence}, we split the rescaled Wasserstein GDA into a symmetric dissipative part and a skew-symmetric coupling in the homogeneous negative Sobolev space. Resolvent-based cross terms in a modified Lyapunov energy then show how the skew-symmetric operator transfers perturbations into directions dissipated by the symmetric operator. The nonlinear error is controlled by the stability result of \Cref{thm:decay_upper_bd}. All together, we obtain local algebraic convergence for all range of timescale ratios. We place this extension in the appendix because the hypocoercive formulation complements the main stability theorem, whereas the mechanism that produces algebraic decay without a spectral gap is already developed in \Cref{sec:wPIconv}.

To complement \Cref{thm:decay_upper_bd}, we also show that there exist systems for which algebraic decay is unavoidable. Assumption~\ref{ass:wpi} alone is not enough for this, since it also allows strongly confining equilibria, such as Gaussian measures, for which a spectral gap and exponential convergence may hold. We therefore impose a two-sided polynomial-tail assumption on the component with the largest weighted Poincar\'e exponent $\sigma_*$.
\begin{assumption}\label{ass:twosided}
 We identify
$\mu_\infty$ with its density with respect to Lebesgue measure and assume
that there exist constants $R_0>0$, $p>d_x+\alpha$, and
$0<c_\mu\leq C_\mu<\infty$ such that
\begin{equation}
    c_\mu\bangle{x}^{-p}
    \leq
    \mu_\infty(x)
    \leq
    C_\mu\bangle{x}^{-p},
    \qquad |x|\geq R_0.
    \label{eqn:two-sided-polynomial-tail}
\end{equation}
In this case $\sigma_*=1$; see \Cref{rmk:easewpi}. The argument is analogous if $\nuinf$ has the same type of decay \eqref{eqn:two-sided-polynomial-tail} at infinity; in that case, we interchange the roles of the $x$- and $y$-variables in what follows.
\end{assumption}
\begin{thm}\label{thm:lowerbd}
    Suppose Assumptions~\ref{ass:wpi} and~\ref{ass:twosided} hold. Then for every sufficiently large $T>0$ there exists an initial condition $h_0^{(T)}=(f_0^{(T)},g_0^{(T)})^\top$ with
    \[
        \int f_0^{(T)}\,\dd\muinf=0, \qquad \int g_0^{(T)}\,\dd\nuinf=0, \qquad 1+f_0^{(T)}\ge 0, \qquad 1+g_0^{(T)}\ge 0,
    \]
    which satisfies the hypotheses \eqref{eqn:bddmoment} of \Cref{thm:decay_upper_bd}. Moreover, there is a constant $c>0$, independent of $T$, such that the corresponding solution $h_t^{(T)}= (f_t^{(T)}, g_t^{(T)})^\top$ satisfies
    \begin{equation}\label{eqn:lower_bds}
        \|f_t^{(T)}\|^2_{L^2(\muinf)}+\|g_t^{(T)}\|^2_{L^2(\nuinf)} \ge c\,(1+t)^{-\frac{p-d_x}{2}} \qquad \text{for all } t\in[T/2,T].
    \end{equation}
\end{thm}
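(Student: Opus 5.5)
The plan is to build, for each large $T$, a localized mean-zero perturbation of $\muinf$ supported in an annulus at radius $L\sim\sqrt{T}$, and to take $g_0^{(T)}=0$. The idea is that diffusion with bounded drift cannot move mass a distance $L$ in time $T\ll L^2/\text{const}$. Concretely, I will fix a smooth bump $\phi$ supported in $\{1\le|z|\le 2\}$ and set
\[
f_0^{(T)}(x)=\delta\Big(\phi(x/L)-\tfrac{\int\phi(\cdot/L)\,\dd\muinf}{\muinf(B)}\1_B(x)\Big),
\]
where $B$ is a fixed ball near the origin and $L=\lambda\sqrt T$. The correction on $B$ makes the perturbation mean zero. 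Alternatively I can take the correction as a multiple of a smooth bump near the origin.

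First, admissibility. By \eqref{eqn:two-sided-polynomial-tail}, $\int\phi(x/L)^2\,\dd\muinf\sim L^{d_x-p}$ and $\int\phi(x/L)^2\bangle{x}^\alpha\,\dd\muinf\sim L^{d_x+\alpha-p}$, and both tend to zero since $p>d_x+\alpha$. The correction term has size $O(L^{d_x-p})$. Choosing $\delta\in(0,1]$ fixed, with $1+f_0\ge0$ ensured by $\delta\|\phi\|_\infty\le 1/2$, the hypotheses \eqref{eqn:bddmoment} hold for $T$ large. So Theorem~\ref{thm:decay_upper_bd} applies, giving the uniform bound \eqref{eqn:uniform-Ealpha-bound} and smallness of $\|f_t\|_{L^2}+\|g_t\|_{L^2}$ for all $t$.

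Second, the lower bound via a test function. I will use the linear functional $\Phi(t)=\int f_t\,\psi\,\dd\muinf=\int\psi\,\dd\mu_t-\int\psi\,\dd\muinf$, where $\psi(x)=\chi(x/L)$ and $\chi$ is a smooth cutoff equal to $1$ on $\{1/2\le |z|\le 4\}$ and supported in $\{1/4\le|z|\le 8\}$. Then
\[
\frac{\dd}{\dd t}\int\psi\,\dd\mu_t=\int\Big(\beta^{-1}\Delta\psi-\nabla\psi\cdot\int\nabla_xk(x,y)\,\dd\nu_t(y)\Big)\dd\mu_t .
\]
The key estimate is that the drift term is controlled on $\mathrm{supp}\nabla\psi$. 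The $\nu_t$-averaged gradient $b_t(x)$ differs from $\int\nabla_xk\,\dd\nuinf=-\beta^{-1}\nabla\log\muinf$ by $\int(\nabla_xk(x,y)-\nabla_xk(x,0))g_t\,\dd\nuinf$. This is bounded by $M\int|y||g_t|\,\dd\nuinf\lesssim\|g_t\|_{L^2}$ via \eqref{eqn:bddk2} and the moment bound \eqref{eqn:bddmomentorig} (need $\alpha\ge2$, or use $\bangle{y}^{\alpha}$-weighted Cauchy–Schwarz with \eqref{eqn:uniform-Ealpha-bound}; I would check this). Also $|\nabla\log\muinf|\lesssim 1+|x|$ at worst, and with $|\nabla\psi|\lesssim L^{-1}$ the drift contributes $O(1)\cdot\int_{\mathrm{supp}\nabla\psi}\dd\mu_t$. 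For pure polynomial tails $|\nabla\log\muinf|\lesssim|x|^{-1}$ would be better, but only \eqref{eqn:bddk2} is assumed, so I will instead rely on the equation for $\mu_t-\muinf$. Since $\muinf$ is stationary for the frozen drift, subtracting gives
\[
\dot\Phi=\int f_t\,\mathcal L\psi\,\dd\muinf+\text{(nonlinear)},
\]
where $\mathcal L$ is the generator. Here $\mathcal L\psi=\beta^{-1}\Delta\psi+\beta^{-1}\nabla\log\muinf\cdot\nabla\psi$. I will aim for $|\mathcal L\psi|\lesssim L^{-2}+L^{-1}|\nabla\log\muinf|\mathbf 1_{\mathrm{supp}\nabla\psi}$. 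The tail condition \eqref{eqn:tailequil} controls the radial part with the favorable sign, and the two-sided bound suggests $|\nabla\log\muinf|\lesssim1/|x|$ in an averaged sense. This is the main obstacle. If pointwise bounds fail, I would choose $\psi$ radial so that only $x\cdot\nabla\log\muinf$ enters, and use \eqref{eqn:tailequil} one-sidedly together with positivity of $\mu_t$. The nonlinear term is $\int\nabla\psi\cdot(b_t-\bar b)\,\dd\mu_t$, which is bounded by $L^{-1}\|g_t\|\cdot\int_{\mathrm{supp}}(1+f_t)\,\dd\muinf\lesssim L^{-1}\|g_t\|L^{d_x-p}$.

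Putting these together gives $|\dot\Phi|\lesssim L^{-2}\|f_t\|_{L^1(\mathrm{supp})}+\dots$. Since $\|f_t\|_{L^1(\mathrm{supp})}\lesssim L^{d_x-p}$ (by mass on the annulus and Cauchy–Schwarz), and $\Phi(0)\ge c\delta L^{d_x-p}$, integrating over $t\le T=L^2/\lambda^2$ with $\lambda$ large keeps $\Phi(t)\ge \tfrac c2\delta L^{d_x-p}$. Finally, Cauchy–Schwarz gives
\[
\|f_t\|_{L^2}^2\ge \Phi(t)^2\Big/\int\psi^2\,\dd\muinf\gtrsim L^{2(d_x-p)}/L^{d_x-p}=L^{d_x-p}\sim T^{-\frac{p-d_x}{2}}\gtrsim(1+t)^{-\frac{p-d_x}2}
\]
for $t\in[T/2,T]$, as claimed in \eqref{eqn:lower_bds}.
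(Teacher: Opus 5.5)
Your proposal has a genuine gap, and it sits where you flagged ``the main obstacle.'' Differentiating $\Phi(t)=\int f_t\psi\,\dd\muinf$ puts the generator on the test function: $\dot\Phi=-\beta^{-1}\int f_t\,L_x\psi\,\dd\muinf-\int(1+f_t)\nabla_xKg_t\cdot\nabla\psi\,\dd\muinf$. To get $\int_0^T|\dot\Phi|\,\dd t\ll L^{d_x-p}$ you need two bounds that neither the hypotheses nor \Cref{thm:decay_upper_bd} supply.

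\emph{First}, you need $|L_x\psi|\lesssim L^{-2}$ pointwise. The only constraints on $\nabla\log\muinf$ are the Hessian bound, the one-sided inequality \eqref{eqn:tailequil}, and the two-sided size of $\muinf$. These allow $|\nabla\log\muinf|\gg|x|^{-1}$ on thin shells: a sharp dip of $\log\muinf$ followed by slow recovery is compatible with all of them. So $|L_x\psi|$ can be $\gg L^{-2}$ on part of $S:=\mathrm{supp}\nabla\psi$. Your radial sign trick yields only the averaged bound $\int|\nabla\log\muinf\cdot\nabla\psi|\,\dd\muinf\lesssim L^{d_x-p-2}$. Transferring it from $\muinf$ to $\mu_t$ needs $\mu_t\lesssim\muinf$ on $S$. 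That does not follow from stationarity of $\muinf$, because the drift is time dependent. Relatedly, your opening heuristic is off: an $O(1)$ drift moves mass a distance $O(T)\gg L$ in time $T\sim L^2$. What is slow is the relative density $f_t$ in the $\muinf$-weighted geometry, not the mass itself.

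\emph{Second}, even granting $|L_x\psi|\lesssim L^{-2}$, you need $\int_S|f_t|\,\dd\muinf\lesssim L^{d_x-p}$ uniformly on $[0,T]$. Neither ``mass on the annulus'' nor Cauchy--Schwarz gives this. On this time scale the available information is $E_\alpha(t)\le R$ and $A_t\lesssim Z_0\asymp L^{d_x-p+2}$. These yield only $\int_S|f_t|\,\dd\muinf\lesssim\min\{R^{1/2}L^{(d_x-p-\alpha)/2},\,L^{d_x-p+1}\}$, which exceeds $L^{d_x-p}$ by a positive power of $L$ because $p-d_x>\alpha$.

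The missing idea is to put the generator on the datum rather than on the test function. Work in $\mathcal H=\dotH^{-1}(\muinf)\oplus\dotH^{-1}(\nuinf)$, where the coupling is skew and $-\mathcal A$ generates a contraction semigroup $S(t)$. Then $\|(S(t)-I)h_0\|_{\mathcal H}\le t\|\mathcal Ah_0\|_{\mathcal H}$. The norms $\|L_xf_0\|_{\dotH^{-1}(\muinf)}=\|\nabla f_0\|_{L^2(\muinf)}$ and $\|L_yK^\top f_0\|_{\dotH^{-1}(\nuinf)}=\|\nabla_yK^\top f_0\|_{L^2(\nuinf)}$ involve no drift at all. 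The nonlinearity is handled by Duhamel, using $\|\mathcal N(h)\|_{\mathcal H}\lesssim\sqrt{AZ}$ and $\int_0^t\sqrt{A_sZ_s}\,\dd s\lesssim\sqrt t\,Z_0$. The $L^2$ pairing is recovered from $\langle u,\psi\rangle_{L^2(\muinf)}=\langle(u,v),(L_x\psi,0)\rangle_{\mathcal H}$ with $\|(L_x\psi,0)\|_{\mathcal H}=\|\nabla\psi\|_{L^2(\muinf)}$.

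The paper uses a localized mean-zero dipole $q_r$ (width $\sim r$, at distance $r$) as both datum and test function. This gives $\|\mathcal A\Psi_r\|_{\mathcal H}\lesssim r^{-2}\|\Psi_r\|_{\mathcal H}$ and $\|\Psi_r\|_{\mathcal H}\|\nabla q_r\|_{L^2(\muinf)}\asymp\|q_r\|^2_{L^2(\muinf)}$. Your far/near datum and your annular $\psi$ would also work inside this framework, using $p-d_x>4$, provided you take a smooth correction at the origin (since $\nabla\1_B\notin L^2$). It is the direct computation of $\dot\Phi$ that does not close.
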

%\textbf{Motivation:} In the whole space $\Rm^{d_x}\times\Rm^{d_y}$,  how to obtain a unified convergence of the gradient descent-ascent flow for all range of $\eta$ under the weighted Poincar\'e inequality assumption?
Analogous local long-time stability and lower bound results can also be established for MFLD, which are the remaining main theorems of this work. The following assumption is largely borrowed from \cite[Assumption A]{wang2026local}, with the obvious weak confinement modifications, identical to those in Assumption~\ref{ass:wpi}.
\begin{assumption}\label{ass:MFLD}
    The functional $F: \mathcal{P}_2(\R^d)\rightarrow \R$ is displacement-smooth, and there exists a
stationary measure $\muinf$ of the MFLD, that is, \begin{equation}\label{eqn:equiMFLD}
    F'(\muinf) + \beta^{-1}\log \muinf = \textrm{const.}\quad\text{on } \R^d.
\end{equation} Moreover, the second variation $F''(\muinf)$ of $F$ at $\muinf$ satisfies
\begin{equation}\label{eqn:2ndvarMFLD}
        \iint F''(\muinf)(x,x')\,f(x)f(x')\,\dd\muinf(x)\dd\muinf(x') \ge -\tau_0\int f^2\,\dd\muinf
    \end{equation}
    for all $f\in L^2(\muinf)$ with $\int f\,\dd\muinf=0$, for some $0\le\tau_0<\beta^{-1}$. Furthermore, $\muinf$ satisfies a weighted Poincar\'e inequality of the form \eqref{eqn:cwpx}, together with the decay conditions \eqref{eqn:bddmomentorig} and \eqref{eqn:tailequil}.
\end{assumption}

\begin{remark}
One concrete example satisfying Assumption~\ref{ass:MFLD} is the interaction energy with weak confinement
\begin{equation}\label{eqn:MVF}
    F(\mu)=\int V(x)\,\dd\mu(x)+\frac12\iint W(x-y)\,\dd\mu(x)\dd\mu(y),
\end{equation}
where $V(x)=a\log\bangle{x}$ for some $a>0$ and $W(z)=\exp\!\big(-\tfrac{|z|^2}{2}\big)$. The Gaussian kernel is positive definite, so the interaction energy is convex in the linear sense, and $F$ is displacement smooth. By \eqref{eqn:equiMFLD}, a stationary measure satisfies
\[
\muinf(x)\propto \bangle{x}^{-a\beta}\exp\!\big(-\beta\,W*\muinf(x)\big),
\]
which defines a probability density provided $a\beta>d$. Since $0\le W*\muinf\le 1$, we have $e^{-\beta}\le \exp(-\beta\,W*\muinf)\le 1$, and therefore $\muinf(x)\sim\bangle{x}^{-a\beta}$ as $|x|\to\infty$. In this case the standard Poincar\'e inequality fails, because $\muinf$ is not exponentially integrable, while a weighted Poincar\'e inequality holds with $\sigma_x=1$.
\end{remark}
\begin{thm}\label{thm:conv-MFLD}
    Let $\mu_t$ be a solution of the MFLD \eqref{eqn:MFLDorig} with initial condition $\mu_0$, and define $f_t:=\frac{\dd\mu_t}{\dd\muinf}-1$. Write $k(x,x'):=F''(\muinf)(x,x')$, and let $\calR:\mathcal{P}_2(\R^d)\to\R$ be the remainder functional defined by
    \begin{equation}\label{eqn:varF}
        F(\mu)=F(\muinf)-\beta^{-1}\int\log\muinf\,\dd(\mu-\muinf)+\frac12\iint k(x,x')\,\dd(\mu-\muinf)(x)\dd(\mu-\muinf)(x')+\calR(\mu)
    \end{equation}
    for all $\mu$. (The first-order term takes this form by \eqref{eqn:equiMFLD}.) Assume in addition that
    \begin{equation}\label{eqn:operator}
        \sup_{x,y}\|\nabla_x\nabla_y F''(\muinf)\|_{op}<\infty,\quad
        \sup_{\substack{x,y,z\in\R^d\\ \tilde\mu\in\mathcal{P}_2(\R^d)}}\|\nabla_x\nabla_y\nabla_z\calR'''(\tilde\mu)\|_{op}<\infty,\quad
        \sup_{\substack{x,y\in\R^d\\ \tilde\mu\in\mathcal{P}_2(\R^d)}}\|\nabla_x\nabla_y^2\calR''(\tilde\mu)\|_{op}<\infty.
    \end{equation}
    Suppose Assumptions~\ref{ass:wpi} (with $\alpha>6\sigma_x$) and~\ref{ass:MFLD} hold, and fix an energy level $R>0$. Then there exists $\varepsilon>0$, depending only on $\alpha,\,\sigma_x,\,R,\,\cwp^x,\,\beta,\,\tau_0$ and the operator bounds in \eqref{eqn:operator}, such that the following holds. If the initial perturbation satisfies
    \begin{equation}\label{eqn:bddmomentMFLD}
        \int f_0^2\bangle{x}^\alpha\,\dd\muinf(x)<\frac R2, \qquad \text{and} \qquad \|f_0\|^2_{L^2(\muinf)}\le\varepsilon,
    \end{equation}
    then the solution $\mu_t$ satisfies the uniform-in-time moment bound
    \begin{equation}\label{eqn:uniform-Ealpha-bound-MFLD}
        \sup_{t\ge0}\int f_t^2\bangle{x}^\alpha\,\dd\muinf(x)\le R.
    \end{equation}
    Moreover, $f_t\to0$ at an algebraic rate: there is a constant $C$, depending on the same parameters, such that
    \begin{equation}\label{eqn:At_upper_decay_MFLD}
        \|f_t\|^2_{L^2(\muinf)}\le C\big(\|f_0\|^2_{\dotH^{-1}(\muinf)}+t\big)^{-\frac{\alpha-2\sigma_x}{2\sigma_x}}.
    \end{equation}
    In particular, if $F$ is quadratic, that is, $\calR=0$, the same estimates hold under the weaker condition $\alpha>4\sigma_x$.
\end{thm}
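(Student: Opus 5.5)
The proof mirrors that of \Cref{thm:decay_upper_bd}, now for the single equation \eqref{eqn:MFLDorig}. We rewrite the MFLD in terms of $f_t$ using \eqref{eqn:varF}:
\[
\d_t f_t = \beta^{-1}L f_t + L(K f_t) + \mathcal N(f_t),
\]
where $L\phi=\muinf^{-1}\nabla\cdot(\muinf\nabla\phi)$ is the weighted Laplacian, $K f(x)=\int k(x,x')f(x')\dd\muinf(x')$, and $\mathcal N$ collects $\nabla\cdot(\muinf f\nabla Kf)$ and the terms coming from $\calR'(\mu_t)$.

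The first step is an $\dot H^{-1}(\muinf)$ energy estimate. Set $u=(-L)^{-1}f$ and $Z_t=\|f_t\|^2_{\dot H^{-1}}=\int f u\,\dd\muinf$. Then
\[
\tfrac12\tfrac{d}{dt}Z_t=-\beta^{-1}\|f\|_{L^2}^2-\langle f,Kf\rangle+\text{(nonlinear)}.
\]
By \eqref{eqn:2ndvarMFLD}, the linear part is bounded above by $-(\beta^{-1}-\tau_0)\|f\|^2_{L^2}$. This is the analogue of the cancellation used in the GDA case; here the sign of $F''$ replaces the skew structure. For the quadratic-interaction nonlinearity $\int u\,\nabla\cdot(\muinf f\nabla Kf)$, we integrate by parts and use the first bound in \eqref{eqn:operator}, so that $\|\nabla Kf\|_\infty\lesssim\|f\|_{L^1(\muinf)}$. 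This controls it by $\|\nabla u\|_{L^2}\|f\|_{L^2}\|f\|_{L^1}\lesssim Z^{1/2}\|f\|_{L^2}^2$. For the $\calR$ terms, we Taylor expand $\calR'(\mu_t)$ around $\muinf$. The bounds on $\nabla_x\nabla_y^2\calR''$ and $\nabla\nabla\nabla\calR'''$ give $\|\nabla\calR'(\mu_t)\|$-type estimates quadratic in $f$ measured against one derivative. Integrating by parts once more onto $\nabla u$ should give a bound $\lesssim (Z^{1/2}+\|f\|_{L^2})\|f\|_{L^2}^2$, possibly with a weight. Hence if $\|f_t\|_{L^2}\le\varepsilon'$ is small, we get $\frac{d}{dt}Z\le -c\|f\|_{L^2}^2$.

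The second step is an $L^2$ estimate. The same kind of computation gives
\[
\tfrac{d}{dt}\|f\|^2_{L^2}\le -c\|\nabla f\|^2_{L^2}+C\|f\|_{L^2}^2,
\]
with the nonlinear contributions absorbed by smallness. This step keeps $\|f_t\|_{L^2}$ small on bounded time intervals and, combined with the first step, for all times.

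The third step is the weighted moment estimate for $E_\alpha(t)=\int f^2\bangle{x}^\alpha\dd\muinf$. Differentiating yields the dissipation $-\beta^{-1}\int|\nabla f|^2\bangle{x}^\alpha$. The commutator from the weight is
\[
\int f^2 L(\bangle{x}^\alpha)\dd\muinf \lesssim \int f^2\bangle{x}^{\alpha-2}\big(\alpha+x\cdot\nabla\log\muinf\big).
\]
By \eqref{eqn:tailequil} with $p>d+\alpha$, this is negative outside a ball, up to a bounded local term $\lesssim\|f\|_{L^2}^2$. The coupling and nonlinear terms contribute $\int f\bangle{x}^\alpha\,\nabla f\cdot\nabla Kf$ and similar, which are bounded by $\|f\|_{L^2}E_\alpha$ plus $\|f\|_{L^2}\cdot(\text{weighted gradient})$. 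We therefore obtain
\[
\tfrac{d}{dt}E_\alpha\le C\|f\|_{L^2}^2+C\|f\|_{L^2}^{\theta}E_\alpha,
\]
and it is here that integrability in time of $\|f_t\|_{L^2}$-powers is needed.

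The closing argument is a bootstrap. Assume $E_\alpha\le R$ on $[0,T^*)$. By the Hölder interpolation in the introduction applied with the weighted Poincaré inequality and with $\dot H^{-1}$ (using $\|f\|_{L^2}^2\le Z^{1/2}\|\nabla f\|$-type interpolation against the $\bangle{x}^{-2\sigma}$ weight), we get
\[
\|f\|_{L^2}^2\gtrsim Z^{1+2\sigma_x/(\alpha-2\sigma_x)}R^{-\cdots}.
\]
Then the first step gives $Z_t\lesssim (Z_0^{-2\sigma/(\alpha-2\sigma)}+t)^{-(\alpha-2\sigma)/2\sigma}$. The second step upgrades this to the $L^2$ decay \eqref{eqn:At_upper_decay_MFLD}, for instance by integrating $\frac{d}{dt}(t\|f\|^2)$ over dyadic intervals. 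In the third step, Grönwall requires $\int_0^\infty\|f_t\|_{L^2}^{\theta}dt<\infty$. With $\theta=1$ (coming from the cubic $\calR$ terms) this needs $(\alpha-2\sigma)/(4\sigma)>1$, i.e.\ $\alpha>6\sigma_x$. When $\calR=0$, the nonlinearity yields $\theta=2$ and only $\alpha>4\sigma_x$ is needed. Choosing $\varepsilon$ small makes $E_\alpha\le R/2+o(1)<R$, so $T^*=\infty$.

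The main obstacle is the third step for the $\calR$ terms: obtaining moment growth bounded by an integrable rate requires handling the weighted transport term $\int f\bangle{x}^\alpha\nabla f\cdot\nabla\calR'$ without losing more than one power of $\|f\|_{L^2}$. I would first try absorbing $\nabla f$ into the weighted dissipation via Young's inequality.
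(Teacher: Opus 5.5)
\textbf{Verdict: genuine gap.} Your architecture matches the paper's: an $\dotH^{-1}$ estimate, an $L^2$ estimate, a weighted-moment estimate, a bootstrap on $E_\alpha\le R$, the interpolation \eqref{eqn:Zt_At}, Gr\"onwall, and averaging over $[t-1,t]$ to pass from $Z_t$ to $\|f_t\|_{L^2}^2$. However, the one step where $\alpha>6\sigma_x$ is actually needed is asserted rather than proved, and you then invoke the hypothesis where it is not needed.

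\textbf{Step 1: the $\calR$-term is not absorbed by smallness.} Up to sign, the $\calR$-contribution to $\frac{\dd}{\dd t}Z_t$ is $2\int(1+f_t)\nabla\calR'(\mu_t)\cdot\nabla u\,\dd\muinf$.
\begin{itemize}
\item Testing each factor of $f$ against one derivative, as you describe, gives $\sup|\nabla\calR'(\mu_t)|\lesssim\|f_t\|^2_{\dotH^{-1}(\muinf)}=Z_t$. The paper obtains the same bound, \eqref{eqn:LinfRpmu}, from Peyre's comparison $W_2^2\le 4\|f_t\|^2_{\dotH^{-1}}$.
\item The term is therefore of size $Z_t^{3/2}(1+\sqrt{A_t})$. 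The constant $1$ in $1+f_t$ leaves no spare $L^2$ factor.
\item Without a spectral gap, $Z_t$ is not bounded by $A_t$. So the term is not of the form $(Z_t^{1/2}+\|f_t\|_{L^2})\|f_t\|_{L^2}^2$, and smallness of $\|f_t\|_{L^2}$ does not absorb it.
\item The missing step is to insert \eqref{eqn:Zt_At} into the $Z$-inequality \eqref{eqn:Zt-At-mfld} on the bootstrap interval:
\[
Z_t^{3/2}=Z_t^{\frac32-\frac{\alpha}{\alpha-2\sigma_x}}Z_t^{\frac{\alpha}{\alpha-2\sigma_x}}\le C_R\,Z_t^{\frac32-\frac{\alpha}{\alpha-2\sigma_x}}A_t .
\]
This is a small multiple of $A_t$ exactly when $\alpha>6\sigma_x$. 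That is where the paper uses the hypothesis.
\end{itemize}

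\textbf{Step 1: the quadratic-interaction term.} A bound on the mixed derivative $\nabla_x\nabla_y F''(\muinf)$ does not give $\|\nabla Kf\|_\infty\lesssim\|f\|_{L^1(\muinf)}$; the example $k(x,x')=x\cdot x'$ shows this. What it gives is $\|\nabla Kf\|_\infty\le M\|f\|_{\dotH^{-1}}$. That produces a term $Z_t\sqrt{A_t}$, which again can only be absorbed through \eqref{eqn:Zt_At}, here requiring $\alpha>4\sigma_x$.

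\textbf{Step 3: $\theta=1$ has no source.} Suppose you absorb $\nabla f$ into the weighted dissipation by Young's inequality, as you propose, and use $\int\bangle{x}^\alpha\,\dd\muinf<\infty$. Then the $\calR$-term contributes at most
\[
\varepsilon\Big(\int\bangle{x}^\alpha|\nabla f_t|^2\,\dd\muinf+\int f_t^2\bangle{x}^{\alpha-2}\,\dd\muinf\Big)+C_\varepsilon\sup|\nabla\calR'(\mu_t)|^2\,(1+E_\alpha(t)).
\]
This is $\lesssim Z_t^2(1+E_\alpha)$, quartic in $f$, and it is exactly the paper's \eqref{eqn:energy-mfld}. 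Gr\"onwall then only needs $\int_0^\infty Z_s\,\dd s<\infty$, i.e.\ $\alpha>4\sigma_x$. So the moment estimate does not produce $\alpha>6\sigma_x$, and your accounting for that hypothesis is incorrect.

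\textbf{Possible alternative.} If you want Step 1 in the form you stated, you must prove $\sup|\nabla\calR'(\mu_t)|\lesssim\|f_t\|^2_{L^2(\muinf)}$ and $\|\nabla Kf_t\|_\infty\lesssim\|f_t\|_{L^2(\muinf)}$ explicitly. One way is to use the mean-zero property, a double difference controlled by the $\nabla_x\nabla_y\nabla_z\calR'''$ bound, and $\int|x|^2\,\dd\muinf<\infty$. That would be a different route from the paper's. You have not carried it out, and it would not produce your $\theta=1$ either.
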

Similarly to \Cref{thm:lowerbd}, we also have a lower bound for MFLD.
\begin{thm}\label{thm:lowerbd-MFLD}
    Suppose Assumptions~\ref{ass:wpi},~\ref{ass:twosided}, and~\ref{ass:MFLD} hold. Then for every sufficiently large $T>0$ there exists an initial condition $f_0^{(T)}$ with $\int f_0^{(T)}\,\dd\muinf=0, ~ 1+f_0^{(T)}\ge0$,
    which satisfies the hypotheses \eqref{eqn:bddmomentMFLD} of \Cref{thm:conv-MFLD}. Moreover, there is a constant $c>0$, independent of $T$, such that the corresponding solution $f_t^{(T)}$ of \eqref{eqn:MFLDres} satisfies
    \begin{equation}\label{eqn:lower_bds-MFLD}
        \|f_t^{(T)}\|^2_{L^2(\muinf)}\ge c\,(1+t)^{-\frac{p-d}{2}} \qquad \text{for all } t\in[T/2,T].
    \end{equation}
\end{thm}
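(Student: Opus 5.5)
We will prove \Cref{thm:lowerbd-MFLD} by mimicking, in the single-species setting, the strategy behind \Cref{thm:lowerbd}: place a localized, mean-zero bump far in the polynomial tail of $\muinf$ at a scale $L\sim\sqrt{T}$. We then show that it cannot relax before times of order $L^2$, neither under the linear Fokker--Planck part of \eqref{eqn:MFLDres} nor under the nonlocal part generated by $F''(\muinf)$ and $\calR$.

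\emph{Construction.} Fix a unit vector $e$ and set $L:=\sqrt{T}$. Take a smooth bump $\phi\ge0$ supported in the unit ball. Define
\[
f_0^{(T)}(x)=\delta\,\big(\phi((x-Le)/L)-\kappa_L\big)\1_{\{|x|\le 2L\}},
\]
or, more cleanly, subtract a compactly supported correction near the origin, with $\kappa_L$ chosen so that $\int f_0^{(T)}\dd\muinf=0$. By \eqref{eqn:two-sided-polynomial-tail}, $\muinf(B(Le,L))\sim L^{d-p}$, so the correction is of size $O(\delta L^{d-p})$ and $1+f_0^{(T)}\ge0$ holds for small $\delta$. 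The norms of $f_0^{(T)}$ are
\[
\|f_0^{(T)}\|_{L^2(\muinf)}^2\sim\delta^2L^{d-p},\qquad \int (f_0^{(T)})^2\bangle{x}^\alpha\dd\muinf\sim \delta^2L^{d-p+\alpha}.
\]
Since $p>d+\alpha$, both quantities are bounded and tend to zero as $L\to\infty$. Hence \eqref{eqn:bddmomentMFLD} holds for $T$ large, and \Cref{thm:conv-MFLD} applies and yields the uniform moment bound \eqref{eqn:uniform-Ealpha-bound-MFLD}.

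\emph{Persistence.} Let $\psi$ be a test function equal to $1$ on $B(Le,L)$, supported in $B(Le,3L/2)$, with $|\nabla\psi|\lesssim L^{-1}$ and $|\nabla^2\psi|\lesssim L^{-2}$. Track the observable $m(t):=\int f_t\psi\,\dd\muinf$, which satisfies $m(0)\gtrsim\delta L^{d-p}$. Writing \eqref{eqn:MFLDres} in weak form, the diffusive part contributes
\[
-\beta^{-1}\int\nabla f_t\cdot\nabla\psi\,\dd\muinf=\beta^{-1}\int f_t\,\mathcal L\psi\,\dd\muinf,
\]
where $\mathcal L=\Delta+\nabla\log\muinf\cdot\nabla$. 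Using $|\nabla\log\muinf|\lesssim\bangle{x}^{-1}$, which follows from the polynomial tails together with bounded Hessians as in \Cref{rmk:easewpi}, we get $|\mathcal L\psi|\lesssim L^{-2}\1_{\mathrm{supp}\,\psi}$.

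The remaining terms involve $\nabla_x\int k(x,x')f_t(x')\dd\muinf(x')$ together with the nonlinear remainder. By \eqref{eqn:operator}, together with the mean-zero property of $f_t$ and a Cauchy--Schwarz estimate against the weighted norms, these are bounded by $C\|f_t\|_{\dotH^{-1}}\cdot\muinf(\mathrm{supp}\,\psi)^{1/2}\cdot L^{-1}$ plus quadratic terms. The quadratic terms are small because $\|f_t\|_{L^2}$ is small.

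\emph{Closing the argument.} Integrating over $[0,t]$ with $t\le T=L^2$ gives
\[
|m(t)-m(0)|\le C\,\frac{t}{L^2}\,\sup_s\|f_s\|_{L^2(\muinf)}\,\muinf(\mathrm{supp}\,\psi)^{1/2}+\text{small}.
\]
Here $\|f_s\|_{L^2}\le\|f_0\|_{L^2}\sim\delta L^{(d-p)/2}$ by $L^2$ monotonicity, up to nonlinear corrections controlled as in the proof of \Cref{thm:conv-MFLD}. Taking $L=\eta^{-1/2}\sqrt T$ with $\eta$ a small absolute constant makes this error at most $m(0)/2$ for all $t\le T$.

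Cauchy--Schwarz then gives
\[
\|f_t\|_{L^2(\muinf)}^2\ge \frac{m(t)^2}{\muinf(\mathrm{supp}\,\psi)}\gtrsim\delta^2L^{d-p}\sim T^{-\frac{p-d}{2}}.
\]
Since $t\in[T/2,T]$, this is comparable to $(1+t)^{-\frac{p-d}{2}}$, which proves \eqref{eqn:lower_bds-MFLD}.

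\emph{Main obstacle.} The delicate step is bounding the nonlocal forcing $\int f_t\,\nabla\psi\cdot\nabla_x(k*f_t)\,\dd\muinf$ and the $\calR$ terms sharply enough. They must be $o(L^{d-p}/L^2)$ uniformly in time, which requires a factor $\muinf(\mathrm{supp}\,\psi)$ rather than its square root. I would first try to exploit that $\nabla_x(k*f_t)$ is uniformly bounded by $\|f_t\|_{L^2}$, via \eqref{eqn:operator} and the mean-zero property. This gives a bound $L^{-1}\|f_t\|_{L^2}\,\muinf(\mathrm{supp}\,\psi)^{1/2}\,\|f_t\|_{L^2(\mathrm{supp}\,\psi)}$, which, combined with the smallness of $\delta$, suffices after time integration, at the cost of choosing $\eta$ appropriately.
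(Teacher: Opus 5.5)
Tracking $m(t)=\int f_t\psi\,\dd\muinf$ directly along the nonlinear flow is a different route from the paper's, and as written it does not close. The only place the smallness $t/L^2$ comes from is the diffusive term, via $|\mathcal L\psi|\lesssim L^{-2}$. That bound needs $|\nabla\log\muinf|\lesssim\bangle{x}^{-1}$ on $\mathrm{supp}\,\nabla\psi$, and the hypotheses do not give it:

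\begin{enumerate}
\item Two-sided polynomial bounds plus a bounded Hessian of $\log\muinf$ only give $|\nabla\log\muinf|=O(1)$, by interpolation on the bounded function $\log\muinf+p\log\bangle{x}$.
\item \eqref{eqn:tailequil} is only a one-sided bound on the radial derivative.
\end{enumerate}
With $|\nabla\log\muinf|=O(1)$ you get only $|\mathcal L\psi|\lesssim L^{-1}$, and you lose a factor $L$ over $t\sim L^2$.

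Second, the bound $\|f_s\|_{L^2(\muinf)}\le\|f_0\|_{L^2(\muinf)}$ is not available. The source term in \eqref{eqn:At-Bt-mfld} already comes from the linear coupling $LKf$, not only from nonlinear corrections. \Cref{thm:conv-MFLD} only gives $A_t\lesssim Z_{t-1}$, which for your datum is of order $\delta^2L^2m$ with $m\asymp L^{d-p}$. Inserted into your closing estimate, this gives an error of order $\delta Lm\gg m(0)\asymp\delta m$.

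The usable substitute is $\int_0^\infty A_s\,\dd s\lesssim Z_0$ (from $\frac{\dd Z_t}{\dd t}\le -c_0A_t$) together with Cauchy--Schwarz in time. That in turn needs $Z_0\lesssim\delta^2L^2m$, which you never prove. For a tail bump compensated near the origin, it requires a weighted chaining estimate over distance $L$.

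The nonlocal term you call the main obstacle is actually harmless once that $Z_0$ bound is available. Pair $\|\nabla Kf_t\|_{L^\infty}\le M\sqrt{Z_0}$ and $\|\nabla\calR'(\mu_t)\|_{L^\infty}\lesssim Z_0$ (from \eqref{eqn:LinfRpmu}) with $\|\nabla\psi\|_{L^1(\muinf)}\lesssim L^{-1}m$, not with an $L^2(\muinf)$ norm. This gives a rate $\lesssim\delta m^{3/2}$, which is negligible over $t\lesssim L^2$ since $p-d>4$.

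Finally, your geometry is off. $B(Le,L)$ has the origin on its boundary, and $B(Le,3L/2)\supset B(0,L/2)$. So $\muinf(\mathrm{supp}\,\psi)\asymp 1$, and your origin correction lies inside $\mathrm{supp}\,\psi$. The radii must be a small fixed fraction of $L$.

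The idea you are missing is the paper's quasimode/semigroup argument in $\dotH^{-1}(\muinf)$, which never tests $\nabla f_t$ against a cutoff.
\begin{enumerate}
\item The datum is a locally mean-zero dipole $q_r=\psi_{+,r}-\gamma_r\psi_{-,r}$ supported in $B(x_r,r/4)$. Then $\|q_r\|^2_{\dotH^{-1}}\asymp r^2m_r$ follows from testing with $q_r$ and from the Euclidean Poincar\'e inequality on a single ball.
\item The identity $\|Lq_r\|_{\dotH^{-1}}=\|\nabla q_r\|_{L^2(\muinf)}$ needs no information on $\nabla\log\muinf$. Local cancellation gives $\|\nabla Kq_r\|_{L^\infty}\lesssim Mrm_r$. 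Together these yield \eqref{eqn:quasimode-final}.
\item By \eqref{eqn:2ndvarMFLD}, $-\mathcal A$ is dissipative in $\dotH^{-1}$, so $S(t)$ is a contraction there. Hence $\|(S(t)-I)q_r\|_{\dotH^{-1}}\le t\|\mathcal Aq_r\|_{\dotH^{-1}}$, which puts all the smallness on the initial datum.
\item The Duhamel remainder is bounded by $\sqrt{A_sZ_s}+(1+\sqrt{A_s})Z_s$, using only $Z_s\le Z_0$ and $\int A_s\,\dd s\lesssim Z_0$.
\item The final pairing uses $|\langle u,q_r\rangle|\le\|u\|_{\dotH^{-1}}\|\nabla q_r\|_{L^2(\muinf)}$. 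This requires $r^3m_r^{1/2}\to 0$, i.e.\ $p-d>6$, which is where $\alpha>6$ enters.
\end{enumerate}
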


\subsection{Related Works}\label{subsec:references}
There has been abundant work on Wasserstein GDA flows since their formulation in \cite{hsieh2019finding, domingo2020mean}. In \cite{ma2022provably}, a quasistatic two-timescale limit is considered, in which one dynamics runs infinitely faster or slower than the other, so that long-time convergence reduces to the convergence of two decoupled flows. This assumption was weakened in \cite{lu2023two}, where the timescale separation is large but finite; exponential convergence is established on compact domains, where logarithmic Sobolev inequalities (LSI) hold. The work \cite{an2025convergence} uses a hypocoercive structure to prove convergence for quadratic games in the two-timescale regime, quantifying the scaling of the rates in different timescale regimes, and uses a coupling argument for the intermediate regime. Other modifications of the dynamics include \cite{kim2024symmetric, lu2025convergence}, which use time averaged gradients. For non-Wasserstein flows of two-player games, see \cite{lascu2024fisher} for Fisher--Rao flows and \cite{lascu2025entropic} for best-response flows. In the single-timescale setting considered here, convergence is known under a global convexity-concavity assumption on $k$ \cite{conger2024coupled, cai2024convergence}. See also \cite{OJMO_2025__6__A1_0} for a particle algorithm, \cite{conger2025monotone} for multispecies flows, and \cite{liu2025convergence} for discretization analysis.

A second line of literature close to this work concerns subexponential convergence rates obtained from weak functional inequalities. Besides the weighted Poincar\'e inequality, the \emph{weak Poincar\'e inequality} \cite{rockner2001weak} has also been considered; it allows one to study the $L^2$ convergence rate of reversible dynamics (in particular, the Fokker--Planck equation) with $L^\infty$ initial data. The relations between different weak functional inequalities are studied extensively in \cite{cattiaux2010functional}; in particular, the weighted Poincar\'e inequality implies the weak Poincar\'e inequality. Analogous results for the Fokker--Planck equation with non-gradient drift are given in \cite{kavian2021fokker}. For kinetic Fokker--Planck equations with weak confinement, see \cite{cao2020kinetic, hu2019subexponential, grothaus2019weak, bouin2023mathrm, brigati2024explicit}. For convergence lower bounds for ergodic Markov processes, we refer to \cite{brevsar2026subexponential}.

MFLD has also been well studied since \cite{hu2021mean}, motivated in particular by the mean-field description of noisy neural network training \cite{mei2018mean, chizat2018global}. Simple cases in which convergence is well understood include overdamped Langevin dynamics \cite{bakry_analysis_2014}, where $F(\mu)=\int V\,\dd\mu$ is linear, and the case where $F$ is displacement convex \cite{ambrosio2008gradient}. In \cite{chizat2022meanfield, nitanda2022convex}, a uniform LSI is identified, leading to global exponential convergence in entropy; this was improved to $L^p$ convergence in \cite{chen2025uniform}. MFLD is closely connected to McKean-Vlasov equations, especially when $F$ has the form of \eqref{eqn:MVF}. One work that was established primarily for McKean-Vlasov equations is \cite[Theorem 1.1(b)]{carrillo2020long}, where the equation is posed on the torus and has no external potential. For local convergence, prior to \cite{wang2026local}, \cite{cormier2025stability} gave a criterion for exponential local stability of MFLD with quadratic $F$ in the 1-Wasserstein distance, and \cite{monmarche2025local} established local exponential convergence under a ``local nonlinear LSI'' when $F$ is not convex and multiple stationary states exist. As the name suggests, MFLD is also the mean-field limit of a particle system, and long-time convergence enables uniform-in-time propagation of chaos; see \cite{chen2025uniform, suzuki2023uniform} for continuous-time dynamics, \cite{suzuki2023convergence} for discretizations, and \cite{chewi2026uniform} for a uniform-in-$N$ LSI.
\subsection*{Notation}
Throughout this work, we mostly follow the notation of \cite{wang2026local}. We write $f_t:=\frac{\dd\mu_t}{\dd\muinf}-1, ~~ g_t:=\frac{\dd\nu_t}{\dd\nuinf}-1$ for the relative density perturbations, and study the convergence of $f_t$ and $g_t$ to zero.

For two nonnegative quantities $A$ and $B$, we write $A\lesssim B$ if $A\le CB$ for a constant $C>0$ that does not depend on $t$ or on the initial data. We write $A\asymp B$ if both $A\lesssim B$ and $B\lesssim A$ hold.

We define the bilinear form $\mathcal{K}:L^2(\muinf)\times L^2(\nuinf)\to\R$ by
\[
\mathcal{K}(f,g)=\int_{\cY}\Big(\int_{\cX}k(x,y)f(x)\,\dd\muinf(x)\Big)g(y)\,\dd\nuinf(y),
\]
and the linear operators $K:L^2(\nuinf)\to L^2(\muinf)$ and $K^\top:L^2(\muinf)\to L^2(\nuinf)$ by
\begin{equation}\label{eqn:defK}
    (Kg)(x)=\int_{\cY}k(x,y)g(y)\,\dd\nuinf(y), \qquad (K^\top f)(y)=\int_{\cX}k(x,y)f(x)\,\dd\muinf(x).
\end{equation}
Let $\nabla_x^*:=-\frac{1}{\muinf}\nabla_x\cdot(\muinf\,\bullet)$ denote the adjoint of $\nabla_x$ in $L^2(\muinf)$, and let $L_x:=\nabla_x^*\nabla_x$ be the corresponding elliptic operator. Similarly, let $\nabla_y^*:=-\frac{1}{\nuinf}\nabla_y\cdot(\nuinf\,\bullet)$ and $L_y:=\nabla_y^*\nabla_y$. For moment estimates, we use the Japanese bracket $\bangle{x}:=(1+|x|^2)^{1/2}$. We write $\mathcal{P}_2(\R^d)$ the set of probability measures on $\R^d$ with finite second moments.

\subsection*{Acknowledgment}
The work of J. An was partially supported by NSFC Grant No.12601384. Part of this work is completed during the authors' mutual visits to SJTU and NUS, and we would like to thank SJTU and NUS for their hospitality.

\subsection*{Use of Artificial Intelligence}
OpenAI ChatGPT is used in this work for literature search, formulating some of the proofs and drafting the manuscript. The important ideas, in particular in \Cref{sec:wPIconv}, including combining weighted Poincar\'e inequality with the estimates in \cite{wang2026local}, and the bootstraping argument, are found by the authors. The authors reviewed and edited the content generated by artificial intelligence as needed and take full responsibility for the content of the manuscript.

\section{Proof of \Cref{thm:decay_upper_bd}: Local algebraic convergence}\label{sec:wPIconv}
Following \cite{wang2026local}, we rewrite \eqref{eqn:GDA}, using \eqref{eqn:equilibrium}
    \begin{align*}
  \d_t \mu_t &= \nabla_x \cdot\Big(\mu_t \int_{\cY} \nabla_x k(x,y) \dd\nu_t(y)\Big)+ \beta^{-1}\Delta_x \mu_t\\
  & \quad -\nabla_x \cdot\Big(\mu_t \int_{\cY} \nabla_x k(x,y) \dd\nu_{\infty}(y)\Big)-\beta^{-1} \nabla_x\cdot (\mu_t \nabla_x \log \mu_{\infty})\\
  &=\nabla_x \cdot\Big(\mu_t \int_{\cY} \nabla_x k(x,y) \dd(\nu_t-\nu_{\infty})(y)\Big)+\beta^{-1} \nabla_x\cdot (\mu_t \nabla_x \log \frac{\dd\mu_t}{\dd\mu_{\infty}}).
    \end{align*}
Divide above by $\muinf$ and we obtain a PDE for $f_t$ and repeat the procedure for $\nu_t$, we derive the equations, see also \cite[(4.2)]{wang2026local} %(temporarily setting $\eta=1$)
\begin{equation}\label{eqn:modGDA}
\begin{aligned}
    \frac{\d}{\d t} f_t &= -\beta^{-1} L_x f_t - L_x Kg_t - \nabla_x^*(f_t\nabla_x K g_t),\\
    \frac{\d}{\d t} g_t &= -\beta^{-1} L_y g_t + L_y K^\top f_t + \nabla_y^*(g_t\nabla_y K^\top f_t).
    \end{aligned}
\end{equation}

The important part of this proof is to consider the interplay between these four quantities:
\begin{equation*}
 \begin{aligned}
        Z_t&:=\|f_t\|^2_{\dotH^{-1}(\muinf)}+\|g_t\|^2_{\dotH^{-1}(\nuinf)}, \\A_t&:=\|f_t\|^2_{L^2(\muinf)}+\|g_t\|^2_{L^2(\nuinf)}, \\ B_t&:=\|\nabla_x f_t\|^2_{L^2(\muinf)}+\|\nabla_y g_t\|^2_{L^2(\nuinf)}, \\
         E_{\alpha}(t)&:=\int f_t^2 \bangle{x}^\alpha \dd\mu_\infty(x) +\int g_t^2 \bangle{y}^\alpha \dd\nu_\infty(y),\\
    \end{aligned}
\end{equation*}
The first three quantities are already considered in \cite{wang2026local}; in particular, $Z_t$ is defined using homogeneous Sobolev norm $\dotH^{-1}$, which is different from \cite{wang2026local} since it is no longer directly controlled by the $L^2$ norm.  The last quantity is necessary in our weak confinement setting, following the discussions after \Cref{thm:decay_upper_bd}. Our goal is to show, under Assumption \ref{ass:wpi}, the convergence of $A_t$ to zero as well as its convergence rate.

Following identical arguments to the proof of \cite[Theorem 4.1]{wang2026local}, we find exactly the same differential inequalities 
\begin{equation}\label{eqn:wangchizatkey}
    \frac{\dd Z_t}{\dd t} \le -2\beta^{-1} A_t+2MZ_t\sqrt{A_t}, \qquad \frac{\dd A_t}{\dd t}\le -2\beta^{-1}B_t+2M\sqrt{B_t Z_t}(1+\sqrt{A_t}).
\end{equation}
We note that these two inequalities only use \cite[Lemma 3.9]{wang2026local} and do not require Poincar\'e inequality, which closes the argument after substituting $c^{-1} Z_t \le A_t \le cB_t$.

%A useful observation which we will use repeatedly is that by H\"older's inequality, if $E_{\alpha}(t)$ remains bounded, then 
%    \[ \|f_t\|^2_{L^2(\muinf)} \le \Big(\int f_t^2\bangle{x}^\alpha \dd\muinf \Big)^\frac{2\sigma_x}{\alpha+2\sigma_x}\Big(\int \frac{f_t^2}{\bangle{x}^{2\sigma_x}} \dd\muinf \Big)^\frac{\alpha}{\alpha+2\sigma_x} \lesssim \|\nabla_x f_t\|_{L^2(\muinf)}^\frac{2\alpha}{\alpha+2\sigma_x}.\]

\begin{comment}
The remaining proof relies on a bootstrap argument. Suppose, given a large constant $R>0$ and a small constant $a_0>0$, there is a $T>0$ such that for all $t\in [0,T]$, we have 
\begin{equation}
    E_{\alpha}(t)\leq R,\quad A_t\leq a_R.
\end{equation}
Under $E_{\alpha}(t)\leq R$
\end{comment}

\begin{proof}
Define the weighted dissipation
\[
\begin{aligned}
    D_\alpha(t)
    &:=
    \int \bangle{x}^{\alpha}|\nabla_x f_t|^2\,\dd\mu_\infty
    +
    \int f_t^2\bangle{x}^{\alpha-2}\,\dd\mu_\infty  \\
    &\quad
    +
    \int \bangle{y}^{\alpha}|\nabla_y g_t|^2\,\dd\nu_\infty
    +
    \int g_t^2\bangle{y}^{\alpha-2}\,\dd\nu_\infty .
\end{aligned}
\]
We first claim that, for all sufficiently small local solutions, there exists small constant $c_0>0$ and large constant $C_0>0$ such that 
\begin{equation}
\label{eqn:Ealpha-diff-ineq}
    \frac{\dd}{\dd t}E_\alpha(t)
    +
    c_0D_\alpha(t)
    \le
    C_0 A_t
    +
    C_0 Z_t\bigl(1+E_\alpha(t)\bigr).
\end{equation}
Indeed, differentiating the weighted moment of $f_t$ gives
    \begin{align*}
        \frac{\dd}{\dd t} \frac{1}{2}\int f_t^2\bangle{x}^\alpha \dd\muinf & = \int f_t \bangle{x}^\alpha\Big( -\beta^{-1} L_x f_t - L_x Kg_t - \nabla_x^*(f_t\nabla_x K g_t)\Big) \dd\muinf \\ & = -\beta^{-1}\int \bangle{x}^\alpha |\nabla_x f_t|^2 \dd\muinf -\frac{\alpha}{\beta}\int \bangle{x}^{\alpha-2}f_t x\cdot \nabla_x f_t \dd\muinf \\ & \quad -\int (1+f_t)\nabla_x(f_t\bangle{x}^\alpha) \cdot \nabla_x(Kg_t) \dd\muinf \\ & = -\beta^{-1}\int \bangle{x}^\alpha |\nabla_x f_t|^2 \dd\muinf +\frac{\alpha}{2\beta}\int f_t^2 \nabla_x \cdot(x\bangle{x}^{\alpha-2}) \dd\muinf \\ & \quad + \frac{\alpha}{2\beta} \int f_t^2 \bangle{x}^{\alpha-2} x\cdot \nabla \log \muinf \dd\muinf  -\alpha\int f_t(1+f_t)\bangle{x}^{\alpha-2}x \cdot \nabla_x(Kg_t) \dd\muinf \\ & \quad -\int \bangle{x}^\alpha (1+f_t)\nabla_x f_t\cdot \nabla_x(Kg_t) \dd\muinf  := I_{good} +I_{pert}
    \end{align*}
Here we call the collection of the first three terms ``good terms'' and the latter two terms ``perturbation terms''. Had $f_t$ solved the Fokker-Planck equation (that is, $g_t=0$ in the first equation of \eqref{eqn:modGDA}), only the ``good terms'' remain in the moment estimate. The non-perturbative good part $I_{good}$ satisfies, by \eqref{eqn:tailequil}, $x\cdot \nabla \log \muinf(x) \le a\1_{|x|\le r_0}-p$, when $p\geq\max\{d_x, d_y\}+\alpha$, there exists $0
<c\leq \min\{1/\beta, \alpha/\beta, \alpha^2/2\beta\}$ such that 
\begin{equation}\label{eqn:estIgood}
\begin{aligned}
   I_{good} = &-\beta^{-1}
    \int \bangle{x}^{\alpha}|\nabla_x f_t|^2\,\dd\mu_\infty
    +
    \frac{\alpha}{2\beta}
    \int f_t^2\bangle{x}^{\alpha-2}
    \left[
        d_x+(\alpha-2)\frac{|x|^2}{\bangle{x}^2}
        +x\cdot\nabla\log\mu_\infty
    \right]\dd\mu_\infty                                    \\
    &\qquad\le
    -c
    \int \bangle{x}^{\alpha}|\nabla_x f_t|^2\,\dd\mu_\infty
    -c
    \int f_t^2\bangle{x}^{\alpha-2}\,\dd\mu_\infty
    +
    C_{r_0}\|f_t\|_{L^2(\mu_\infty)}^2 .
\end{aligned}
\end{equation}
The last contribution arises from the $a\1_{|x|\le r_0}$ part of \eqref{eqn:tailequil} and uses $\bangle{x} \le \max\{1,r_0\}$ when $|x|\le r_0$. This positive contribution can be controlled by $A_t$. The same estimate holds for the $g_t$-equation.

It remains to estimate the perturbative terms in $I_{pert}$. An important estimate from \cite[Lemma 3.9]{wang2026local} is that 
    \begin{equation}\label{eqn:nablaKft}
        \sup_{\mathbb{R}^{d_y}} \|\nabla (Kg_t)\| \le M \|g_t\|_{H^{-1}(\nuinf)},
    \end{equation}  with $M = \sup_{x,y}\|\nabla_x \nabla_y k\|_{op}$.
Similarly, we also have $\sup_{\mathbb{R}^{d_x}} \|\nabla (K^\top f_t)\| \le M \|f_t\|_{H^{-1}(\muinf)}$. Therefore, using the zero-average conditions on \(f_t,g_t\), we have
\[
    \|\nabla_x (K g_t)\|_{L^\infty_x}^2
    +
    \|\nabla_y (K^\top f_t)\|_{L^\infty_y}^2
    \le CZ_t .
\]
By Cauchy's inequality and Young's inequality, as well as $\int \bangle{x}^{\alpha} \dd\muinf <+\infty$, 
\[
\begin{aligned}
    &I_{pert}\leq C\sqrt{Z_t}\Big(\int f_t\bangle{x}^{\alpha-1}\dd\muinf+\int f_t^2\bangle{x}^{\alpha-1}\dd\muinf+\int|\nabla_x f_t|(1+f_t)\bangle{x}^{\alpha}\dd\muinf \Big)                                                 \\
    &\qquad\le
    \varepsilon
    \left(
        \int \bangle{x}^{\alpha}|\nabla_x f_t|^2\,\dd\mu_\infty
        +
        \int f_t^2\bangle{x}^{\alpha-2}\,\dd\mu_\infty
    \right)
    +
    C_\varepsilon Z_t
    \left(
        1+
        \int f_t^2\bangle{x}^{\alpha}\,\dd\mu_\infty
    \right).
\end{aligned}
\]
The analogous estimate for the \(g_t\)-equation is
\[
\begin{aligned}
    &\left|
    \int (1+g_t)\nabla_y(g_t\bangle{y}^{\alpha})
    \cdot\nabla_y (K^\top f_t)\,\dd\nu_\infty
    \right|                                                   \\
    &\qquad\le
    \varepsilon
    \left(
        \int \bangle{y}^{\alpha}|\nabla_y g_t|^2\,\dd\nu_\infty
        +
        \int g_t^2\bangle{y}^{\alpha-2}\,\dd\nu_\infty
    \right)
    +
    C_\varepsilon Z_t
    \left(
        1+
        \int g_t^2\bangle{y}^{\alpha}\,\dd\nu_\infty
    \right).
\end{aligned}
\]
We then choose $\varepsilon\in (0,c/2)$ sufficiently small, and combined with \eqref{eqn:estIgood}, we obtain \eqref{eqn:Ealpha-diff-ineq}.

We now close the estimate by a bootstrap argument. Set
\begin{equation} \label{eqn:Reps}
    R>2E_\alpha(0)+\delta,
\end{equation}
without loss of generality, we may assume $\delta\in (0,1)$. For simplicity we also assume $\sigma_x\geq \sigma_y>0$. Let \(T>0\) be such that, on \([0,T]\),
\begin{equation}
\label{eqn:bootstrap-assumption}
    E_\alpha(t)\le R.
\end{equation}
To control $Z_t$, we use the dual formula of the $\dotH^{-1}$ norm
\begin{equation}\label{def:H-1}
    \|f_t\|_{\dot H^{-1}(\mu_\infty)}
=\sup_{\substack{
\varphi\in \dot H_0^1(\mu_\infty)\\
\|\varphi\|_{\dot H_0^1(\mu_\infty)}=1
}}
\left|
\int f_t \varphi \, \dd\mu_\infty
\right|.
\end{equation}
We take any test function $\varphi \in \dotH^1(\muinf)$ with $\int \varphi \dd \muinf=0$ and $\|\varphi\|_{\dotH^1_0(\muinf)}=1$, and repeatedly use H\"older's inequality:
    \begin{align*}
    \int f_t\varphi \dd\muinf & \le \Big(\int f_t^2\bangle{x}^{2\sigma_x} \dd\muinf\Big)^\frac{1}{2}\Big(\int \frac{\varphi^2}{\bangle{x}^{2\sigma_x}}\dd\muinf\Big)^\frac{1}{2} \\ & \leq C \Big(\int f_t^2\bangle{x}^{2\sigma_x} \dd\muinf\Big)^\frac{1}{2}  \\ & \le C\Big(\int f_t^2\bangle{x}^\alpha \dd\muinf \Big)^{\frac{\sigma_x}{\alpha}} \Big(\int f_t^2\dd\muinf \Big)^{\frac{1}{2}-\frac{\sigma_x} {\alpha}}\lesssim R^{\frac{\sigma_x}{\alpha}} \Big(\int f_t^2\dd\muinf \Big)^{\frac{1}{2}-\frac{\sigma_x}{\alpha}}.
    \end{align*}
    Here the second inequality uses
    \[ \int \frac{\varphi^2}{\bangle{x}^{2\sigma_x}} \dd \muinf \stackrel{\eqref{eqn:cwpx}}{\lesssim} \int |\nabla_x \varphi|^2 \dd \muinf =1.\]
    This yields by duality \eqref{def:H-1} \[\|f_t\|_{\dotH^{-1}(\muinf)} \lesssim R^{\frac{\sigma_x}{\alpha}} \Big(\int f_t^2\dd\muinf \Big)^{\frac{1}{2}-\frac{\sigma_x}{\alpha}} .\] Similarly, $ \|g_t\|_{\dotH^{-1}(\nuinf)}\lesssim R^{\frac{\sigma_x}{\alpha}} \Big(\int g_t^2\dd\nuinf \Big)^{\frac{1}{2}-\frac{\sigma_x}{\alpha}}$.
    As a consequence, we get for all $t\in [0,T]$
    \begin{equation}\label{eqn:Zt_At}
        Z_t\lesssim R^{\frac{2\sigma_x}{\alpha}} A_t^{1-\frac{2\sigma_x}{\alpha}}.
    \end{equation}
    We are now ready to prove local algebraic convergence rates. From the above inequality, we have
    \begin{equation}\label{eqn:Z_split}
        Z_t = Z_t^{1-\frac{\alpha}{2(\alpha-2\sigma_x)}}Z_t^{\frac{\alpha}{2(\alpha-2\sigma_x)}}\lesssim  R^{\frac{\sigma_x}{\alpha-2\sigma_x}} Z_t^{1-\frac{\alpha}{2(\alpha-2\sigma_x)}}\sqrt{A_t}
    \end{equation} 
Now  if $\alpha>4\sigma_x$, then substitute \eqref{eqn:Z_split} into the first inequality of \eqref{eqn:wangchizatkey}, and we find that there exists a small constant  $c>0$ such that 
    \begin{equation}\label{eqn:dZtneg}
        \frac{\dd Z_t}{\dd t} \le -2\beta^{-1} A_t+2CR^{\frac{\sigma_x}{\alpha-2\sigma_x}} Z_t^{1-\frac{\alpha}{2(\alpha-2\sigma_x)}}A_t.
    \end{equation}
    As long as $A_0$ is sufficiently small (the threshold depends on $R$), then so is $Z_0$ thanks to \eqref{eqn:Zt_At}, which makes the right-hand side of \eqref{eqn:dZtneg} negative, and in particular smaller than $-cA_t$ for some $c>0$. Therefore on the bootstrap interval $[0,T]$, $Z_t$ is always decreasing, and \eqref{eqn:Zt_At} gives
    \begin{equation}\label{eqn:dZtlemcA_t}
        \frac{\dd Z_t}{\dd t}\le -cA_t \lesssim - R^{-\frac{\alpha-2\sigma_x}{2\sigma_x}} Z_t^{\frac{\alpha}{\alpha-2\sigma_x}},
    \end{equation} which leads to (note that $\frac{\alpha-2\sigma_x}{2\sigma_x}>1$)
    \begin{equation}\label{eqn:Zt_decay}
        Z_t \le \Big(\frac{2\sigma_x}{\alpha-2\sigma_x}cR^{-\frac{\alpha-2\sigma_x}{2\sigma_x}} t+ Z_0^{-\frac{2\sigma_x}{\alpha-2\sigma_x}}\Big)^{-\frac{\alpha-2\sigma_x}{2\sigma_x}}\sim \big(Z_0^{-\frac{2\sigma_x}{\alpha-2\sigma_x}}+C_Rt\big)^{-\frac{\alpha-2\sigma_x}{2\sigma_x}}, \quad \text{for}~t\in[0,T].
    \end{equation}
    Here and below all constants depend on $R$. We now return to \eqref{eqn:Ealpha-diff-ineq}. Dropping the nonnegative dissipation term and applying Gr\"onwall's inequality gives that for $C_0$ that appears in \eqref{eqn:Ealpha-diff-ineq},
\[
\begin{aligned}
    E_\alpha(t)
    &\le
    \left(
        E_\alpha(0)
        +
        C_0\int_0^t A_s\,ds
        +
        C_0\int_0^t Z_s\,ds
    \right)
    \exp\left(
        C_0\int_0^t Z_s\,ds
    \right).
\end{aligned}
\]
Thanks to $\frac{\alpha-2\sigma_x}{2\sigma_x}>1$, the right-hand side of \eqref{eqn:Zt_decay} is integrable as $t\in (0,\infty)$. Meanwhile, the relationship $\frac{\dd Z_t}{\dd t} \le -cA_t$ yields $\int_0^t A_s \dd s \lesssim Z_0$. We therefore obtain
\[
    E_\alpha(t)
    \le
    \left(
        E_\alpha(0)
        +C_0 Z_0
        +\frac{2\sigma_x}{\alpha-4\sigma_x}C_0Z_0^{\frac{\alpha-4\sigma_x}{\alpha-2\sigma_x}}
    \right)
    \exp\left(\frac{2\sigma_x}{\alpha-4\sigma_x}C_0 Z_0\right),
    \qquad 0\le t\le T .
\]
 Since \(R>2E_\alpha(0)\), we can choose a  constant $\varepsilon>0$ depending on $\alpha, \sigma_*,R,C_0$ small enough so that \(A_0\leq \varepsilon\) implies (note that $Z_0$ is also sufficiently small due to \eqref{eqn:Zt_At})
\[
   \left(
        E_\alpha(0)
        +C_0 Z_0
        +\frac{2\sigma_x}{\alpha-4\sigma_x}C_0Z_0^{\frac{\alpha-4\sigma_x}{\alpha-2\sigma_x}}
    \right)
    \exp\left(\frac{2\sigma_x}{\alpha-4\sigma_x}C_0 Z_0\right) \leq
    \frac R2 .
\]
Thus the bootstrap bound \(E_\alpha(t)\le R\) improves to \(E_\alpha(t)\le R/2\).
Hence the bootstrap closes globally, and
\[
    \sup_{t\ge 0}E_\alpha(t)\le R.
\]   
    
    We finally control $A_t$. By Young's inequality, we obtain from the second inequality of \eqref{eqn:wangchizatkey}
    \[ \frac{\dd A_t}{\dd t} \le -\beta^{-1}B_t + CZ_t(1+A_t) \le CZ_t(1+E_\alpha(t)) \le CRZ_t.\]
Here we are using $A_t \le E_\alpha(t)\le R$. For $t\ge 1$ and $s\in [t-1,t]$, we obtain by integrating the above inequality in time, using monotonicity of $Z_t$
\[A_t \le A_s + CR\int_s^t Z_u \dd u \le A_s+CRZ_{t-1}. \]
Averaging over $s\in [t-1,t]$, 
and combining with \eqref{eqn:dZtlemcA_t}, $\frac{\dd Z_t}{\dd t} \le -cA_t$, we find
\[ A_t \le \int_{t-1}^t A_s \dd s +CRZ_{t-1} \le c^{-1}(Z_{t-1}-Z_t) +CRZ_{t-1} \le CRZ_{t-1}.\]
Together with \eqref{eqn:Zt_decay}, this finishes the proof.

\end{proof}

\section{Proof of \Cref{thm:lowerbd}: A lower bound}
\label{sec:lower-bound}

In this section, we construct slowly relaxing perturbations for the full
coupled dynamics~\eqref{eqn:modGDA}. The purpose is to give a lower bound on
the worst-case convergence rate allowed by the assumptions. More precisely,
for every sufficiently large observation time $T$, we construct an admissible
initial condition, with the same uniform moment and smallness bounds as in
\Cref{thm:decay_upper_bd}, whose solution has not decayed too much by time $T$.

Notice that, with $h=(f,g)^\top$, equation~\eqref{eqn:modGDA} can be written as
\begin{equation}
    \partial_t h=-\mathcal A h+\mathcal N(h),
    \label{eqn:lower-bound-abstract-evolution}
\end{equation}
where
\[
    \mathcal A
    :=
    \begin{pmatrix}
        \beta^{-1}L_x & L_xK\\
        -L_yK^\top & \beta^{-1}L_y
    \end{pmatrix},
    \qquad
    \mathcal N(f,g)
    :=
    \begin{pmatrix}
        -\nabla_x^*(f\nabla_xKg)\\
        \nabla_y^*(g\nabla_yK^\top f)
    \end{pmatrix}.
    \]
We work in $\mathcal H
    :=
    \dot H^{-1}(\mu_\infty)
    \oplus
    \dot H^{-1}(\nu_\infty),$
with inner product
\[
    \langle h,\widetilde h\rangle_{\mathcal H}
    =
    \langle f,L_x^{-1}\widetilde f\rangle_{L^2(\mu_\infty)}
    +
    \langle g,L_y^{-1}\widetilde g\rangle_{L^2(\nu_\infty)}.
    \]
The off-diagonal part of $\mathcal A$ is skew-adjoint in $\mathcal H$, and
hence
\begin{align*}
    \langle \mathcal Ah,h\rangle_{\mathcal H}
    &=
    \beta^{-1}
    \left(
        \|f\|_{L^2(\mu_\infty)}^2
        +
        \|g\|_{L^2(\nu_\infty)}^2
    \right).
\end{align*}
Moreover, the estimate in \eqref{eqn:nablaKft} gives
\begin{equation}
    \|L_xKg\|_{\dot H^{-1}(\mu_\infty)}
    +
    \|L_yK^\top f\|_{\dot H^{-1}(\nu_\infty)}
    \leq
    C M\|h\|_{\mathcal H}.
    \label{eqn:coupling-bounded-Hminus1}
\end{equation}
Consequently, $-\mathcal A$ generates a contraction semigroup $S(t):=e^{-t\mathcal A}$ on $\mathcal H$, that is, $\|S(t)\|_{\mathcal{H}\to \mathcal{H}}\leq 1$.

The idea of the proof is to construct an initial perturbation supported in the far field where the equilibrium has little mass. Its width is proportional to its distance \(r\) from the origin, giving a diffusive relaxation time of order \(r^2\), while the small equilibrium mass makes the perturbation admissible and suppresses the nonlinear interaction. We first construct these perturbations and estimate their initial velocity under the linearized dynamics. We then show that the nonlinear solution remains close to its initial condition in \(\mathcal H\) until time \(O(r^2)\). This persistence estimate gives the $\dotH^{-1}$ lower bound directly and the \(L^2\) lower bound by testing against the initial perturbation. Choosing \(r\sim \sqrt T\) produces the claimed bounds on \([T/2,T]\).
\begin{proof}
We divide the proof into four steps. All asymptotics throughout are with respect to $r\to\infty$.

\medskip
\noindent\emph{Step 1: construction of initial condition.}
Fix a unit vector $e_1\in\mathbb R^{d_x}$ and set $x_r:=re_1$, then for all sufficiently
large $r$,
\[ B(x_r,\frac{r}{4})
    \subset
    \left\{x:\frac r2\leq |x|\leq\frac{3r}{2}\right\}.
\]
It follows from~\eqref{eqn:two-sided-polynomial-tail} that
\begin{equation}
    \mu_\infty(x)\asymp r^{-p},
    \qquad x\in B(x_r,\frac{r}{4}),
    \label{eqn:mu-local-comparability}
\end{equation}
where all implicit constants are independent of $r$.  Define
$m_r:=\mu_\infty(B(x_r,\frac{r}{4})),$
then $m_r\asymp r^{-p+d_x}.$ 
Choose a nonnegative function $\psi\in C_c^\infty(B(0,1/8)), \,
    \psi\not\equiv0, $
and fix a unit vector $e\in\mathbb R^{d_x}$. Set
\[
    z_+:=\frac12e,
    \qquad
    z_-:=-\frac12e,
\]
and define
\[
    \psi_{\pm,r}(x)
    :=
    \psi\left(\frac{x-x_r}{r/4}-z_\pm\right).
\]
Their supports are disjoint and both are contained in $B(x_r,\frac{r}{4})$. Let
\[
    M_{\pm,r}
    :=
    \int \psi_{\pm,r}\,\dd\mu_\infty,
    \qquad
    \gamma_r:=\frac{M_{+,r}}{M_{-,r}},
\]
and define
\[
    q_r:=\psi_{+,r}-\gamma_r\psi_{-,r}.
\]
By construction,
\begin{equation}
    \int q_r\,\dd\mu_\infty=0.
    \label{eqn:qr-zero-mean}
\end{equation}
By~\eqref{eqn:mu-local-comparability},
$M_{+,r}\asymp M_{-,r}\asymp m_r, \,
    \gamma_r\asymp1.$ Consequently,
\begin{equation}
    \|q_r\|_{L^\infty}\leq C,
    \qquad
    \|q_r\|_{L^1(\mu_\infty)}\asymp m_r,
    \qquad
    \|q_r\|_{L^2(\mu_\infty)}^2\asymp m_r.
    \label{eqn:qr-Lp-estimates}
\end{equation}
Since
$ \nabla\psi_{\pm,r}(x)
   \asymp r^{-1}
    \nabla\psi\left(\frac{x-x_r}{\ell_r}-z_\pm\right),
$
we also have
$\|\nabla q_r\|_{L^2(\mu_\infty)}^2 \asymp r^{-2}m_r.$
Choose a fixed $\kappa>0$ sufficiently small that $ 1+\kappa q_r\geq0 $
for every sufficiently large $r$; this is possible by the uniform $L^\infty$ bound in~\eqref{eqn:qr-Lp-estimates}. Our choice of initial data $h_{0,r}$ of \eqref{eqn:lower-bound-abstract-evolution} has the form
\[
    h_{0,r}
    :=
    \binom{f_{0,r}}{g_{0,r}}
     =
    \binom{\kappa q_r}{0}:=
    \kappa\Psi_r.
    \]
     Then $A_0\asymp\kappa^2m_r\ll 1$, and because $\bangle{x}\asymp r$ on the support of $q_r$,
\[
    E_\alpha(0)
    \asymp
    \kappa^2r^\alpha m_r
    \asymp
    \kappa^2r^{\alpha-p+d_x}.
    \]
Since $p>d_x+\alpha$, both $A_0$ and
$E_\alpha(0)$ tend to zero as $r\to\infty$. Thus, after fixing $\kappa$
and then taking $r$ sufficiently large, the hypotheses of \Cref{thm:decay_upper_bd}
hold uniformly.

We also need estimates on $\|q_r\|_{\dotH^{-1}(\muinf)}$. Taking
$\phi =\frac{q_r}{\|\nabla q_r\|_{L^2(\mu_\infty)}}$
in the dual characterization~\eqref{def:H-1}, we get
\begin{equation}
\begin{aligned}
    \|q_r\|_{\dot H^{-1}(\mu_\infty)}
    &\geq
    \frac{\|q_r\|_{L^2(\mu_\infty)}^2}
    {\|\nabla q_r\|_{L^2(\mu_\infty)}}
    \gtrsim
    rm_r^{1/2}.
\end{aligned}
    \label{eqn:qr-Hminus1-lower}
\end{equation}
For the reverse inequality, take an arbitrary test function $\phi$ supported in $B(x_r,\frac{r}{4})$, and let
$\overline\phi_r:=\frac{1}{m_r}\int_{B(x_r,\frac{r}{4})}\phi\,\dd\mu_\infty$. Since $q_r$ is supported in $B(x_r,\frac{r}{4})$ and has zero
$\mu_\infty$-mean,
\begin{align*}
    \left|\int q_r\phi\,\dd\mu_\infty\right|
    &=
    \left|
        \int_{B(x_r,\frac{r}{4})}
        q_r(\phi-\overline\phi_r)\,\dd\mu_\infty
    \right|\\
    &\leq
    \|q_r\|_{L^2(\mu_\infty)}
    \|\phi-\overline\phi_r\|_{L^2(B(x_r,\frac{r}{4}),\mu_\infty)}.
\end{align*}
The local comparability~\eqref{eqn:mu-local-comparability} and the standard
Poincar\'e inequality on a Euclidean ball imply
\[
    \|\phi-\overline\phi_r\|_{L^2(B(x_r,\frac{r}{4}),\mu_\infty)}
    \leq
    Cr
    \|\nabla\phi\|_{L^2(B(x_r,\frac{r}{4}),\mu_\infty)}.
\]
Therefore,
\begin{equation}
    \|q_r\|_{\dot H^{-1}(\mu_\infty)}
    \lesssim
    rm_r^{1/2}.
    \label{eqn:qr-Hminus1-upper}
\end{equation}
Combining~\eqref{eqn:qr-Hminus1-lower} and
\eqref{eqn:qr-Hminus1-upper}, we obtain
\begin{equation}
    \|q_r\|_{\dot H^{-1}(\mu_\infty)}^2
    \asymp
    r^2m_r, \qquad \textrm{ and } \qquad Z_0 \asymp \kappa^2 r^2 m_r.
    \label{eqn:qr-Hminus1-equivalence}
\end{equation}

\medskip
\noindent\emph{Step 2: estimates on the linear equation.} Notice that
\[
    \mathcal A\Psi_r
    =
    \binom{\beta^{-1}L_xq_r}{-L_yK^\top q_r}.
    \]
Since $L_x=\nabla_x^*\nabla_x$,
\[
    \|L_xq_r\|_{\dot H^{-1}(\mu_\infty)}
    =
    \|\nabla q_r\|_{L^2(\mu_\infty)}
    \lesssim
    r^{-1}m_r^{1/2}.
    \]
For the other component, the cancellation~\eqref{eqn:qr-zero-mean} gives for every $y$,
\begin{align*}
    \nabla_yK^\top q_r(y)
    &=
    \int \nabla_yk(x,y)q_r(x)\,\dd\mu_\infty(x)\\
    &=
    \int
    \bigl(\nabla_yk(x,y)-\nabla_yk(x_r,y)\bigr)
    q_r(x)\,\dd\mu_\infty(x).
\end{align*}
Using~\eqref{eqn:bddk2}, the support property of $q_r$, and
\eqref{eqn:qr-Lp-estimates}, $|\nabla_yK^\top q_r(y)| \leq CMrm_r,$
which leads to
\[
    \|L_yK^\top q_r\|_{\dot H^{-1}(\nu_\infty)}
    =
    \|\nabla_yK^\top q_r\|_{L^2(\nu_\infty)}
    \lesssim
    Mrm_r.
    \]
Together with~\eqref{eqn:qr-Hminus1-equivalence}, this yields
\begin{equation}
    \frac{\|\mathcal A\Psi_r\|_{\mathcal H}}
    {\|\Psi_r\|_{\mathcal H}}
    \lesssim
    r^{-2}+m_r^{1/2}.
    \label{eqn:full-quasimode-residual}
\end{equation}
Now $r^2m_r^{1/2}
    \asymp
    r^{2-\frac12(p-d_x)}.$
The assumptions $p>d_x+\alpha$ and $\alpha>4\sigma_*=4$ imply $p-d_x>4$, hence $m_r^{1/2}\ll r^{-2},$ and therefore, for all sufficiently large $r$,
\begin{equation}
    \|\mathcal A\Psi_r\|_{\mathcal H}
    \leq
    Cr^{-2}\|\Psi_r\|_{\mathcal H}.
    \label{eqn:quasimode-final}
\end{equation}
% In particular, if
% \begin{equation}
%     \Phi_r:=\frac{\Psi_r}{\|\Psi_r\|_{\mathcal H}},
% \end{equation}
% then
% \begin{equation}
%     \|\Phi_r\|_{\mathcal H}=1,
%     \qquad
%     \|\mathcal A\Phi_r\|_{\mathcal H}\lesssim\ell_r^{-2},
%     \label{eqn:normalized-quasimode}
% \end{equation}
% while
% \begin{equation}
%     \|q_r\|_{L^2(\mu_\infty)}^2
%     \asymp m_r,
%     \qquad
%     \|\Psi_r\|_{\mathcal H}^2
%     \asymp\ell_r^2m_r.
%     \label{eqn:Psi-r-size-summary}
% \end{equation}

% The proof of Proposition~2.2 gives
% \begin{equation}
%     \sup_{s\geq0}Z_s\leq Z_0,
%     \qquad
%     \int_0^\infty A_s\,ds\leq C Z_0.
%     \label{eqn:Z-monotone-A-integrable}
% \end{equation}

 Let $t_r:=c_1r^2,$ where $c_1>0$ will be chosen sufficiently small, independently of $r$.  Let $h_r(t)$ denote the nonlinear solution with initial datum
$h_{0,r}=\kappa\Psi_r$. By Duhamel's formula,
\begin{equation}
    h_r(t)
    =
    \kappa S(t)\Psi_r
    +
    \int_0^tS(t-s)\mathcal N(h_r(s))\,\dd s.
    \label{eqn:nonlinear-Duhamel-lower-bound}
\end{equation}
As long as $c_1$ is sufficiently small, due to the diffusive behavior of $S(t)$, $S(t)\Psi_r$ should be sufficiently close to $\Psi_r$ up to time $t_r$. By contractivity of $S(t)$ and~\eqref{eqn:quasimode-final}, for $0\leq t\leq t_r$,
\begin{equation}
\begin{aligned}
    \|(S(t)-I)\Psi_r\|_{\mathcal H}
    &\leq
    \int_0^{t}
    \|S(s)\mathcal A\Psi_r\|_{\mathcal H}\,ds\leq
    Ctr^{-2}\|\Psi_r\|_{\mathcal H}
    \leq
    Cc_1\|\Psi_r\|_{\mathcal H}.
\end{aligned}
    \label{eqn:linear-persistence-H}
\end{equation}
\begin{comment}
In particular, choosing $c_1$ sufficiently small,
\[
    \|S(t)\Psi_r\|_{\mathcal H}
    \geq
    \frac34\|\Psi_r\|_{\mathcal H},\qquad \text{for}~t\in[0,t_r].
    \]\end{comment}
We also need a componentwise estimate. Since
\[
    \langle u,q_r\rangle_{L^2(\mu_\infty)}
    =
    \left\langle
        \binom{u}{v},
        \binom{L_xq_r}{0}
    \right\rangle_{\mathcal H},
\]
and
\[
    \left\|\binom{L_xq_r}{0}\right\|_{\mathcal H}
    =
    \|\nabla q_r\|_{L^2(\mu_\infty)},
\]
we infer from~\eqref{eqn:linear-persistence-H} that, for $0\leq t\leq t_r$,
\[\left|
        \left\langle
            [S(t)\Psi_r]_1-q_r,q_r
        \right\rangle_{L^2(\mu_\infty)}
    \right|
    \leq
    Cc_1
    \|\Psi_r\|_{\mathcal H}
    \|\nabla q_r\|_{L^2(\mu_\infty)}
    \leq
    Cc_1m_r.
\]
Using~\eqref{eqn:qr-Lp-estimates}, and decreasing $c_1$ if necessary, we
obtain
\begin{equation}
    \left\langle
        [S(t)\Psi_r]_1,q_r
    \right\rangle_{L^2(\mu_\infty)}
    \geq
\frac{3}{4}\|q_r\|_{L^2(\mu_\infty)}^2,\qquad \text{for}~t\in[0,t_r].
    \label{eqn:linear-persistence-pairing}
\end{equation}

\medskip
\noindent\emph{Step 3: estimates on the nonlinear perturbation.}
The contribution from the $\mathcal{N}(h)$ term is treated as the error term, which can be controlled by
\begin{equation}
\begin{aligned}
    \|\mathcal N(h_r(s))\|_{\mathcal H}
    & = \Big(\|\nabla_x^*(f_{r,s}\nabla_x(Kg_{r,s}))\|^2_{\dotH^{-1}(\muinf)} + \|\nabla_y^*(g_{r,s}\nabla_y(K^\top f_{r,s}))\|^2_{\dotH^{-1}(\nuinf)}\Big)^\frac{1}{2} \\ & \le \Big(\|f_{r,s}\nabla_x(Kg_{r,s})\|^2_{L^2(\muinf)} + \|g_{r,s}\nabla_y(K^\top f_{r,s})\|^2_{L^2(\nuinf)}\Big)^\frac{1}{2} \\ & \le \Big( \|f_{r,s}\|_{L^2(\muinf)}^2 \|\nabla_x (Kg_{r,s})\|_{L^\infty}^2 + \|g_{r,s}\|_{L^2(\nuinf)} ^2\|\nabla_y (Kf_{r,s})\|_{L^\infty}^2\Big)^\frac{1}{2}
    \\ & \stackrel{\eqref{eqn:nablaKft}}{\leq}
    C M\sqrt{A_sZ_s}.
\end{aligned}
    \label{eqn:remainder-bound}
\end{equation}
Note that in this time frame, $Z_t$ is decreasing, and $\frac{\dd Z_t}{\dd t} \le -cA_t$, hence \[ \int_0^t Z_s \dd s \le tZ_0, \qquad \int_0^t A_s \dd s \lesssim Z_0,\]
thus by Cauchy-Schwarz,
\begin{equation}\label{eqn:rtAsZs}
    \int_0^t \sqrt{A_sZ_s} \dd s \le \sqrt{t}Z_0.
\end{equation} 
Using~\eqref{eqn:remainder-bound} and \eqref{eqn:rtAsZs}, we get that 
\begin{align*}
    \left\|
        \int_0^{t}S(t-s)\mathcal N(h_r(s))\,\dd s
    \right\|_{\mathcal H}
    \leq CM\int_0^{t}\sqrt{A_sZ_s}\,\dd s\leq
    C M\sqrt{t} Z_0\asymp
    \kappa^2r^3 m_r.
\end{align*}
% Relative to the size of the slow mode, this error is controlled by
% \begin{equation}
%     \frac{\kappa^2\ell_r^3m_r}
%     {\kappa\|\Psi_r\|_{\mathcal H}}
%     \lesssim
%     \kappa\ell_r^2m_r^{1/2}.
%     \label{eqn:nonlinear-relative-error}
% \end{equation}
% By~\eqref{eqn:mr-scaling},
% \begin{equation}
%     \ell_r^2m_r^{1/2}
%     \asymp
%     r^{2\sigma_*-\frac12(p-d_x\sigma_*)}
%     \longrightarrow0.
% \end{equation}
% Thus, for all sufficiently large $r$, the nonlinear error in
% \eqref{eqn:nonlinear-Duhamel-error} is at most
% $\frac14\kappa\|\Psi_r\|_{\mathcal H}$. Combining this with
% \eqref{eqn:linear-persistence-norm}, we obtain
% \begin{equation}
%     \|h_r(t_r)\|_{\mathcal H}
%     \geq
%     \frac12\kappa\|\Psi_r\|_{\mathcal H}.
% \end{equation}
% Therefore,
% \begin{equation}
%     Z_{t_r}
%     \gtrsim
%     \kappa^2\ell_r^2m_r.
%     \label{eqn:Z-lower-at-tr}
% \end{equation}
Now let
$w_r(t):=\int_0^tS(t-s)\mathcal N(h_r(s))\,ds$. By the same duality argument as above,
\begin{align*}
    \left|
        \langle [w_r(t)]_1,q_r\rangle_{L^2(\mu_\infty)}
    \right|
    &\leq
    \|w_r(t)\|_{\mathcal H}
    \|\nabla q_r\|_{L^2(\mu_\infty)}\\
    &\lesssim
   \kappa^2r^2 m_r^{3/2} \ll \kappa m_r \asymp \kappa \|q_r\|^2_{L^2(\muinf)},\qquad \text{as}~r\to \infty.
\end{align*}
% Since
% \begin{equation}
%     \kappa\ell_r^2m_r^{1/2}\longrightarrow0,
% \end{equation}
% this is, for all sufficiently large $r$, bounded by
% \begin{equation}
%     \frac14\kappa\|q_r\|_{L^2(\mu_\infty)}^2.
% \end{equation}

\medskip
\noindent\emph{Step 4: conclusion.} We are now ready to obtain the slow time decay estimates. Multiplying
\eqref{eqn:nonlinear-Duhamel-lower-bound} with $(q_r,0)^\top$, together with~\eqref{eqn:linear-persistence-pairing}, we obtain that for $0\leq t\leq t_r$,
\[
    \langle f_{t,r},q_r\rangle_{L^2(\mu_\infty)}
    \geq \frac{3}{4}\kappa\|q_r\|_{L^2(\mu_\infty)}^2 - c\kappa^2r^2 m_r^{3/2}\geq \frac{1}{2}\kappa\|q_r\|_{L^2(\mu_\infty)}^2 \asymp \kappa m_r.
\]
Thus, by Cauchy-Schwarz inequality,
\[
    \|f_{t,r}\|_{L^2(\mu_\infty)}
    \geq
    \frac{
        |\langle f_{t,r},q_r\rangle_{L^2(\mu_\infty)}|
    }{
        \|q_r\|_{L^2(\mu_\infty)}
    }
    \gtrsim
    \kappa m_r^{1/2}.
\]
Consequently,
\begin{equation}
    A_{t}
    \geq
    \|f_{t,r}\|_{L^2(\mu_\infty)}^2
    \gtrsim
    \kappa^2m_r, \qquad \text{for}~t\in[0,t_r].
    \label{eqn:A-lower-at-tr}
\end{equation}
Recall that $t_r=c_1r^{2}$, and $m_r\asymp r^{-p+d_x}$.
Thus~\eqref{eqn:A-lower-at-tr} gives
\[
    A_{t}
    \gtrsim
    t_r^{-\frac{p-d_x}{2}}\gtrsim t^{-\frac{p-d_x}{2}}, \qquad \text{for }~ t\in[t_r/2, t_r],
\] 
%which implies that
%\[
%    Z_{t}
 %   \gtrsim
  %  t^{-\frac{p-(d_x+2)}{2}},  \qquad \text{for }~ t\in[t_r/2, t_r].
%\]
Finally, for each sufficiently large $T$, we can choose $r=r(T)$ so that
$t_r=T$. This proves~\eqref{eqn:lower_bds}.
\end{proof}

% \begin{corollary}[Polynomial-tail benchmark]
% \label{cor:polynomial-tail-benchmark}
% Assume that $\sigma_*=1$ and
% \begin{equation}
%     p=d_x+\alpha+\delta
% \end{equation}
% for some $\delta>0$. Then, for every sufficiently large $T$, there exists an
% admissible initial condition satisfying the same uniform bounds as in
% Proposition~2.2 such that
% \begin{equation}
%     Z_T
%     \geq
%     c(1+T)^{-\frac{\alpha+\delta-2}{2}},
%     \label{eqn:Z-lower-benchmark}
% \end{equation}
% and
% \begin{equation}
%     A_T
%     \geq
%     c(1+T)^{-\frac{\alpha+\delta}{2}}.
%     \label{eqn:A-lower-benchmark}
% \end{equation}
% In particular, as $\delta\downarrow0$, the exponent in
% \eqref{eqn:Z-lower-benchmark} approaches the upper-bound exponent
% $(\alpha-2)/2$ obtained in Section~2 for $Z_t$.
% \end{corollary}

\begin{remark}[Quantifiers in the lower bound]
The initial condition constructed in \Cref{thm:lowerbd} depends on the observation time $T$. The theorem therefore gives a lower bound on the worst-case convergence rate: in particular, it rules out any faster decay estimate that holds uniformly over the admissible class of initial data. It does not assert that a single fixed initial condition satisfies the same lower bound for all sufficiently large times. Such a fixed-data statement would require a multiscale initial datum, together with additional control of the nonlinear interactions among its different tail packets.
\end{remark}

\section{Results for Mean-field Langevin Dynamics} \label{sec:MFLD}
For this section, since there is no $g$ or $\nu$, we use $K$ to denote the operator $K^\top$ defined in \eqref{eqn:defK}, and also use $L$ to denote $L_x$, and $\sigma$ for $\sigma_x$.
\begin{proof}[Proof of \Cref{thm:conv-MFLD}]
    We again start with writing out the equation for $f_t$, which is present in \cite[(3.5)]{wang2026local}
    \begin{equation}\label{eqn:MFLDres}
      \partial_t f_t=-\beta^{-1}Lf_t-LKf_t-\nabla^*(f_t\nabla Kf_t)-\nabla^*((f_t+1)\nabla \calR'(\mu_t)) 
    \end{equation}
   The treatment of the first three terms is identical to the proof of \Cref{thm:decay_upper_bd}; the third term was changed from $\nabla_x^*(f_t\nabla Kg_t)$ to $\nabla^*(f_t\nabla Kf_t)$, but the analysis of this term only uses \eqref{eqn:nablaKft} with $f_t$ in place of $g_t$, and the estimate is thus also identical. As for the final term, the inequality $W_2^2 (\mu,\muinf) \le 2c^{-1}_{PI}\chi^2 (\mu\|\muinf)$ in \cite{liu2020poincare} no longer works, but we can use \cite[Theorem 2.1]{peyre2018comparison} to bound the term by $\dot{H}^{-1}$ norm:
   \begin{equation}\label{eqn:LinfRpmu}
        \sup_{\R^d}\|\nabla \calR'(\mu_t)\| \lesssim W_2^2(\mu_t,\muinf) \le 4\|f_t\|^2_{\dotH^{-1}(\muinf)}. 
   \end{equation}  
   Substituting this into the $\dotH^{-1}$ energy estimate in \cite[Section 3]{wang2026local}, we arrive at
   \begin{multline*}
        \frac{\dd}{\dd t}\|f_t\|^2_{\dotH^{-1}(\muinf)} \le -2(\beta^{-1}-\tau_0)\|f_t\|^2_{L^2(\muinf)}+  C\|f_t\|_{\dotH^{-1}(\muinf)} \Big[\|f_t\|_{L^2(\muinf)} \|f_t\|_{\dotH^{-1}(\muinf)} \\ + (1 + \|f_t\|_{L^2(\muinf)})\|f_t\|^2_{\dotH^{-1}(\muinf)}  \Big],
   \end{multline*}
   or equivalently, 
   \begin{equation}\label{eqn:Zt-At-mfld}
       \frac{\dd Z_t}{\dd t} \le -c_0 A_t +CZ_t\sqrt{A_t}+CZ_t^{3/2}(1+\sqrt{A_t}).
   \end{equation}  
   Similarly, the $L^2$ energy estimate now becomes
   \begin{equation}\label{eqn:At-Bt-mfld}
       \frac{\dd A_t}{\dd t} \le -2\beta^{-1} B_t +C\sqrt{B_tZ_t}(1+\sqrt{A_t})(1+\sqrt{Z_t}).
   \end{equation}
   As for the moment estimate, the extra term can be estimated using
   \begin{align*}
       & \Big|\int (1+f_t)\nabla \calR'(\mu_t)\cdot \nabla(\bangle{x}^\alpha f_t)\Big| \\ & \qquad \le \varepsilon
    \left(
        \int \bangle{x}^{\alpha}|\nabla f_t|^2\,\dd\nu_\infty
        +
        \int f_t^2\bangle{x}^{\alpha-2}\,\dd\nu_\infty
    \right)
    +
    C_\varepsilon Z_t^2
    \left(
        1+
        \int f_t^2\bangle{x}^{\alpha}\,\dd\nu_\infty
    \right).
   \end{align*} 
   this becomes
   \begin{equation}\label{eqn:energy-mfld}
        \frac{\dd}{\dd t}E_\alpha(t)
    \le
    C_0 A_t
    +
    C_0 Z_t(1+Z_t)\bigl(1+E_\alpha(t)\bigr).
   \end{equation}
   The rest of the arguments proceed similarly to \Cref{sec:wPIconv}. The inequality \eqref{eqn:Zt_At} does not depend on the dynamics, hence substituting it into \eqref{eqn:Zt-At-mfld}, and we derive
   \[\frac{\dd Z_t}{\dd t} \le -c_0A_t + C_R A_t(Z_t^{1-\frac{\alpha}{2(\alpha-2\sigma)}}+Z_t^{\frac{3}{2}-\frac{\alpha}{\alpha-2\sigma}}). \]
   As long as $\alpha>6\sigma$, we can choose $A_0$ (and hence $Z_0$) sufficiently small so that $\frac{\dd Z_t}{\dd t} \le -c_0A_t$ still holds, leading us to the same estimate as \eqref{eqn:Zt_decay}, and in particular, $Z_t$ is still decreasing. We comment here that when $F$ is quadratic, or $\calR=0$, we obtain identical differential inequalities as \eqref{eqn:wangchizatkey} and \eqref{eqn:Ealpha-diff-ineq}, and therefore $\alpha>4\sigma$ is sufficient. The rest of the proof, including the closure of bootstrap and the bound on $A_t$, is identical to that of \Cref{thm:decay_upper_bd}.
\end{proof}
   \begin{proof}[Proof of \Cref{thm:lowerbd-MFLD}]
       The proof is very similar to that of \Cref{thm:lowerbd}, and we only point out the differences. Here we need to define
       \[ \mathcal{A}f=\beta^{-1}Lf +LKf.\] As before, define $S(t)$ to be the contraction semigroup generated by $-\mathcal{A}$. The construction of $q_r$ is identical, and $\Psi_r$ is replaced by $q_r$. The estimate \eqref{eqn:full-quasimode-residual} and \eqref{eqn:quasimode-final} remain the same. The initial condition is chosen as $\kappa q_r$. The remainder term needs a separate estimate from \eqref{eqn:remainder-bound}:
       \begin{align*}
             \|\mathcal N(f_{r,s})\|_{\mathcal H}
    & = \Big(\|\nabla_x^*(f_{r,s}\nabla_x(Kf_{r,s}))\|^2_{\dotH^{-1}(\muinf)} + \big\|\nabla_x^*\big((1+f_{r,s})\nabla_x \calR'(\mu_s)\big)\big\|^2_{\dotH^{-1}(\muinf)}\Big)^\frac{1}{2} \\ & \le \Big(\|f_{r,s}\nabla_x(Kf_{r,s})\|^2_{L^2(\muinf)} + \|(1+f_{r,s})\nabla_x \calR'(\mu_s)\|^2_{L^2(\muinf)}\Big)^\frac{1}{2} \\ & \le \Big( \|f\|_{L^2(\muinf)}^2 \|\nabla_x (Kf_{r,s})\|_{L^\infty}^2 + (1+\|f_{r,s}\|_{L^2(\muinf)}^2)\|\nabla_x \calR'(\mu_s)\|_{L^\infty}^2\Big)^\frac{1}{2}
    \\ & \stackrel{\eqref{eqn:nablaKft},\eqref{eqn:LinfRpmu} }{\leq}
    C M\big(\sqrt{A_sZ_s}+(1+\sqrt{A_s})Z_s\big).
       \end{align*}
    The extra remainder term has the scaling (the term $\sqrt{A_s}Z_s$ is of higher order and thus neglected)
    \[\int _0^t Z_s \dd s \le tZ_0 \asymp \kappa^2 r^4 m_r, \]
    hence \[ \left|
        \langle w_r(t),q_r\rangle_{L^2(\mu_\infty)}
    \right|
    \leq
    \|w_r(t)\|_{\mathcal H}
    \|\nabla q_r\|_{L^2(\mu_\infty)} \lesssim \kappa^2 r^3m_r^{3/2} \ll \kappa m_r \asymp \kappa\|q_r\|^2_{L^2(\muinf)}. \]
    The last estimate uses $p>d_x+\alpha$ and $\alpha>6$. The rest of the proof proceeds identically.
   \end{proof}

\appendix

\section{Hypocoercivity}\label{sec:hyp}
% The main purpose of this section is to explore the hypocoercivity structure of the two-timescale mean-field GDA, and especially how the skew-symmetric operator transfers perturbations into directions dissipated by the symmetric operator. 
This appendix continues the finite-dimensional hypocoercivity analysis of GDA by An and Lu \cite{an2025convergence}. Their analysis of quadratic games uses modified Lyapunov functions to expose the interaction between symmetric dissipation and skew-symmetric coupling. We develop the corresponding formulation for two-timescale Wasserstein GDA under Assumption~\ref{ass:wpi}.

\Cref{thm:decay_upper_bd} provides the starting point: a cancellation in the negative Sobolev energy estimate, combined with weighted interpolation and a moment bootstrap, yields local algebraic stability for equal timescales without requiring a spectral gap. Here we retain this mechanism while making the operator structure explicit. After rescaling, the coupling is skew-symmetric in the homogeneous negative Sobolev space $\dot H^{-1}$. Resolvent-based cross terms then produce modified energies adapted to the two regimes $0<\eta\le 1$ and $\eta>1$. The additional quadratic forms in the resulting dissipation capture the contribution of the coupling, and the nonlinear error is controlled by the stability result of \Cref{thm:decay_upper_bd}.

Consider the two-timescale mean-field GDA 
 \begin{equation}\label{eqn:etaGDA}
     \begin{aligned}
         &\d_t \mu_t = \nabla_x \cdot\Big(\mu_t \int_{\cY} \nabla_x k(x,y) \dd\nu_t(y)\Big)+ \beta^{-1}\Delta_x \mu_t\\
         &\d_t \nu_t = \eta\Big(-\nabla_y \cdot\Big(\nu_t \int_{\cX} \nabla_yk(x,y) \dd\mu_t(x)   \Big)+ \beta^{-1}\Delta_y \nu_t\Big).
     \end{aligned}
 \end{equation}
With the same relative density perturbations $f_t=\dd\mu_t/\dd\mu_\infty-1$ and $g_t=\dd\nu_t/\dd\nu_\infty-1$ as in the main text, this becomes
\begin{equation}\label{eqn:etamodGDA}
\begin{aligned}
    \frac{\d}{\d t} f_t &= -\beta^{-1} L_x f_t - L_x Kg_t - \nabla_x^*(f_t\nabla_x K g_t),\\
    \frac{\d}{\d t} g_t &= \eta\Big(-\beta^{-1} L_y g_t + L_y K^\top f_t + \nabla_y^*(g_t\nabla_y K^\top f_t)\Big).
    \end{aligned}
\end{equation}
All perturbations have zero mean with respect to the corresponding equilibrium measure. Set $h_t=(\sqrt\eta f_t,g_t)^\top$. In the original time variable,
\begin{equation}
    \d_t h_t =
    -\widetilde{\mathcal L}_s h_t-\sqrt{\eta}\mathcal L_a h_t+\widetilde{\mathcal N}(h_t),
\end{equation}
where
\[
    \widetilde{\mathcal L}_s
    =\beta^{-1}
    \begin{pmatrix}
        L_x & 0\\
        0 & \eta L_y
    \end{pmatrix},
    \qquad
    \mathcal L_a
    =
    \begin{pmatrix}
        0 & L_xK\\
         -L_yK^\top & 0
    \end{pmatrix},
\]
and
\begin{equation}
        \widetilde{\mathcal N}(h_1, h_2) =
    \begin{pmatrix}
        -\nabla_x^*(h_1\nabla_xK h_2)\\
        \sqrt{\eta}\nabla_y^*(h_2\nabla_yK^\top h_1)
    \end{pmatrix}.
\end{equation}
Rescale time by $\tau=\sqrt\eta\,t$, and, with a slight abuse of notation,
write $h_\tau=(\sqrt\eta f_{\tau/\sqrt\eta},g_{\tau/\sqrt\eta})^\top$.
Then
\begin{equation}\label{eqn:dms_evol}
    \d_\tau h_\tau =
    -\mathcal L_s h_\tau-\mathcal L_a h_\tau+\mathcal N(h_\tau),
\end{equation}
with the same $\mathcal{L}_a$ and 
\begin{equation}
    \mathcal{L}_s
    =\beta^{-1}
    \begin{pmatrix}
        \eta^{-1/2} L_x & 0\\
        0 & \eta^{1/2} L_y
    \end{pmatrix},
    \qquad
    \mathcal N(h_1, h_2) =
    \begin{pmatrix}
        -\eta^{-1/2}\nabla_x^*(h_1\nabla_xK h_2)\\
        \nabla_y^*(h_2\nabla_yK^\top h_1)
    \end{pmatrix}.
\end{equation}
We work in the same homogeneous negative Sobolev space as in \Cref{sec:wPIconv}
\begin{equation*}
    \mathcal H
    =
    \dot H^{-1}(\mu_\infty)\oplus \dot H^{-1}(\nu_\infty)
\end{equation*}
with inner product
\begin{equation*} 
    \langle h, h'\rangle_{\mathcal H}
    =
    \langle h_1,L_x^{-1} h'_1\rangle_{L^2(\muinf)}
    +
    \langle h_2,L_y^{-1}h'_2\rangle_{L^2(\nuinf)}.
\end{equation*}
With respect to this inner product,
\begin{equation*}  
    \mathcal L_s^*=\mathcal L_s,
    \qquad
    \mathcal L_a^*=-\mathcal L_a,
\end{equation*}
which are symmetric and skew-symmetric operators, respectively.

For a fixed $\rho>0$, we define
\begin{equation}\label{eq:wc-resolvents}
\begin{aligned}
 \mathcal M_x&=L_y K^\top(\rho+\beta^{-1}L_x)^{-1}:\dot H^{-1}(\muinf)\longrightarrow \dot H^{-1}(\nuinf),\\
\mathcal M_y&=L_x K(\rho+\beta^{-1}L_y)^{-1}:\dot H^{-1}(\nuinf)\longrightarrow \dot H^{-1}(\muinf).
\end{aligned}
\end{equation}
Then, we introduce the modified Lyapunov energy
\begin{equation}\label{eqn:DMS-energy-def}
    \mathscr H^\varepsilon(h)
    :=
    \frac12\|h\|_{\mathcal H}^2+
    \begin{cases}
 \varepsilon_\eta \langle \mathcal{M}_x h_1, L_y^{-1}h_2\rangle_{L^2(\nuinf)},
       &0<\eta\le 1,\\
 -\eps_\eta\langle h_1, L_x^{-1}\mathcal{M}_y h_2\rangle_{L^2(\muinf)},
       &\eta>1.
\end{cases}
\end{equation}
where a sufficiently small $\eps_\eta>0$ is to be chosen depending on $\eta$. Note that with (\ref{eqn:bddk2}) in Assumption \ref{ass:wpi}, both $\mathcal{M}_x$ and $\mathcal{M}_y$ are bounded operators in the $\dot{H}^{-1}$ sense, therefore sufficiently small chosen $\eps_\eta$ ensures that there exists $0<\eps<1/2$,
\begin{equation}
    \frac{1-\eps}{2}\|h\|_{\mathcal H}^2\leq \mathscr H^\varepsilon(h)\leq \frac{1+\eps}{2}\|h\|_{\mathcal H}^2,
\end{equation}
and thus we have the norm equivalence. 
\begin{lemma}\label{lem:M_bounds}
    For $\muinf$ and $\nuinf$ satisfying Assumption \ref{ass:wpi} with $\alpha>4\sigma_*$, both $\mathcal{M}_x$ and $\mathcal{M}_y$ are bounded, in both $\dotH^{-1}$ and $L^2$ spaces, in the sense that for fixed $\rho>0$ and $M$ given by \eqref{eqn:bddk2}, for any mean-zero $f$ and $g$ in the corresponding $L^2\cap\dot H^{-1}$ spaces, there exist constants $M_{x,1},\, M_{x,2}, \, M_{y,1}, \, M_{y,2}>0$ such that
    \begin{align}
         \|\mathcal{M}_x f\|_{\dot{H}^{-1}(\nuinf)}\le\frac{M}{\rho}  \|f\|_{\dot{H}^{-1}(\muinf)}, & \qquad
\|\mathcal{M}_yg\|_{ \dot{H}^{-1}(\muinf)}\le\frac{M}{\rho} \|g\|_{\dot{H}^{-1}(\nuinf)}. \label{eqn:MxH-1} \\ \|\mathcal{M}_x f\|_{L^2(\nuinf)}^2\leq M_{x,1} \|f\|_{L^2(\muinf)}^2, &\qquad \|\mathcal{M}_y g\|_{L^2(\muinf)}^2\leq M_{y,1} \|g\|_{L^2(\nuinf)}^2, \label{eqn:MxL2}\\ |\langle \mathcal{M}_x f, K^\top f\rangle_{L^2(\nuinf)}|\leq M_{x,2}\|f\|_{L^2(\muinf)}^2,
         &\qquad |\langle \mathcal{M}_y g, K g\rangle_{L^2(\muinf)}|\leq M_{y,2}\|g\|_{L^2(\nuinf)}^2. \label{eqn:Mxmix}
    \end{align}
\end{lemma}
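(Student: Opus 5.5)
We prove each bound for $\mathcal M_x$; the statements for $\mathcal M_y$ follow by exchanging $x\leftrightarrow y$ and $K\leftrightarrow K^\top$. The basic tool is the $\dot H^{-1}\to L^\infty$ gradient bound \eqref{eqn:nablaKft}, used together with resolvent estimates for $(\rho+\beta^{-1}L_x)^{-1}$. The latter follow from $L_x\ge0$ being self-adjoint in both $L^2(\muinf)$ and $\dot H^{-1}(\muinf)$. Throughout, set $u:=(\rho+\beta^{-1}L_x)^{-1}f$, which is again mean zero.

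\emph{Step 1: the bound \eqref{eqn:MxH-1}.} Since $\|L_y w\|_{\dot H^{-1}(\nuinf)}=\|\nabla_y w\|_{L^2(\nuinf)}$, we have $\|\mathcal M_x f\|_{\dot H^{-1}(\nuinf)}=\|\nabla_y K^\top u\|_{L^2(\nuinf)}$. This is at most $\sup_y|\nabla_y K^\top u|\le M\|u\|_{\dot H^{-1}(\muinf)}$ by \eqref{eqn:nablaKft}. In $\dot H^{-1}(\muinf)$ the operator $L_x$ is nonnegative and self-adjoint, so by the spectral theorem the resolvent satisfies $\|(\rho+\beta^{-1}L_x)^{-1}\|_{\dot H^{-1}\to\dot H^{-1}}\le\rho^{-1}$. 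This gives $\|\mathcal M_x f\|_{\dot H^{-1}}\le \frac{M}{\rho}\|f\|_{\dot H^{-1}}$. The same estimate is what makes $\mathscr H^\eps$ norm-equivalent to $\tfrac12\|h\|_{\mathcal H}^2$.

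\emph{Step 2: the bound \eqref{eqn:MxL2}.} We now need $\|L_yK^\top u\|_{L^2(\nuinf)}$. Write $L_yK^\top u=-\Delta_y K^\top u-\nabla\log\nuinf\cdot\nabla_y K^\top u$. For the first term, \eqref{eqn:bddk2} gives $|\nabla_y^2 K^\top u(y)|=|\int \nabla_y^2k(x,y)u\,\dd\muinf|\le M\|u\|_{L^1(\muinf)}\le M\|u\|_{L^2(\muinf)}$. For the second term, \eqref{eqn:nablaKft} gives $|\nabla_y K^\top u|\le M\|u\|_{\dot H^{-1}}$. By \eqref{eqn:equilibrium} and \eqref{eqn:bddk2}, $|\nabla\log\nuinf(y)|\lesssim 1+|y|$, so this term lies in $L^2(\nuinf)$ provided $\int\bangle{y}^2\dd\nuinf<\infty$. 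That holds because $\alpha>4\sigma_*$ allows $\alpha\ge2$ in the polynomial case $\sigma_*=1$; in general one must assume or verify it. Finally, $\|u\|_{\dot H^{-1}}$ is controlled by $\|u\|_{L^2}$ through the weighted Poincar\'e duality estimate \eqref{eqn:Zt_At}. That step needs a weighted moment of $u$, so instead we use $\|u\|_{\dot H^{-1}}^2=\langle u,L_x^{-1}u\rangle\le \beta\rho^{-1}\langle f,u\rangle$-type bounds. Concretely, $\beta^{-1}\|u\|_{\dot H^{-1}}$-pairing with the resolvent equation $\rho u+\beta^{-1}L_xu=f$ gives $\rho\|u\|_{\dot H^{-1}}^2+\beta^{-1}\|u\|_{L^2}^2=\langle f,L_x^{-1}u\rangle$. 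Combined with $\|u\|_{L^2}\le\rho^{-1}\|f\|_{L^2}$, this yields the constant $M_{x,1}$.

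\emph{Step 3: the bound \eqref{eqn:Mxmix}.} Integrating by parts once in $y$ gives $\langle \mathcal M_xf,K^\top f\rangle_{L^2(\nuinf)}=\int \nabla_yK^\top u\cdot\nabla_yK^\top f\,\dd\nuinf$. Both factors are bounded in $L^\infty$ by $M\|u\|_{L^1}$ and $M\|f\|_{L^1}$, using \eqref{eqn:bddk2} and the mean-zero cancellation from Step 2 of the lower-bound proof. So the pairing is at most $M^2\rho^{-1}\|f\|_{L^2}^2$ after the $L^2$ resolvent bound.

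\emph{Main obstacle.} The main obstacle is Step 2: the drift term $\nabla\log\nuinf$ grows linearly, and the $\dot H^{-1}$ norm is not controlled by $L^2$ without a spectral gap. I would first try to avoid this entirely by replacing the bound with a weighted $L^2$ bound via \eqref{eqn:tailequil}. Failing that, I would use the $L^\infty$ gradient bound directly, with $\|\nabla_yK^\top u\|_{L^\infty}\lesssim\|u\|_{L^1}$ obtained by subtracting $\nabla_yk(x_0,y)$ and using the moment condition \eqref{eqn:bddmomentorig}.
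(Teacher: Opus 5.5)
Your Step 1 is the paper's argument. The gap is in Step 2.

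\textbf{Where Step 2 breaks.} Your main line bounds $\sup_y|\nabla_yK^\top u|$ by $M\|u\|_{\dotH^{-1}(\muinf)}$ and then tries to control $\|u\|_{\dotH^{-1}}$ by $\|f\|_{L^2}$. This cannot work without a spectral gap.
\begin{itemize}
\item The identity $\rho\|u\|^2_{\dotH^{-1}}+\beta^{-1}\|u\|^2_{L^2}=\langle f,L_x^{-1}u\rangle_{L^2(\muinf)}$ is correct. But its right side is $\langle f,u\rangle_{\dotH^{-1}}$, which you can only bound by $\|f\|_{\dotH^{-1}}\|u\|_{\dotH^{-1}}$.
\item Bounding it by $\|f\|_{L^2}$ instead would need $L_x^{-1}$ to be bounded on $L^2(\muinf)$, which is exactly the Poincar\'e inequality that fails here.
\item Spectrally, with $E$ the spectral resolution of $L_x$, you have $\|u\|^2_{\dotH^{-1}}=\int\lambda^{-1}(\rho+\beta^{-1}\lambda)^{-2}\,\dd\langle E_\lambda f,f\rangle$. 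The multiplier blows up as $\lambda\to0$. So $\|u\|_{\dotH^{-1}}/\|f\|_{L^2}$ is unbounded over mean-zero $f\in L^2\cap\dotH^{-1}$.
\item Adding $\|u\|_{L^2}\le\rho^{-1}\|f\|_{L^2}$ does not change this. No constant $M_{x,1}$ comes out of this chain.
\end{itemize}

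\textbf{The fix, and the misstated fallback.} The fallback in your ``main obstacle'' paragraph is the right idea, and it is what the paper does. However, you state the resulting bound incorrectly.
\begin{itemize}
\item Subtracting $\nabla_yk(x_0,y)$ and using mean zero gives $|\nabla_yK^\top u(y)|\le M\int|x-x_0||u(x)|\,\dd\muinf(x)$. This is not $\lesssim\|u\|_{L^1(\muinf)}$: for $k(x,y)=x\cdot y$, $\nabla_yK^\top u=\int xu\,\dd\muinf$.
\item The correct estimate is Cauchy--Schwarz: $|\nabla_yK^\top u(y)|\le M\sqrt{m_{2,x}}\,\|u\|_{L^2(\muinf)}$, with $m_{2,x}=\int|x|^2\dd\muinf$.
\item The paper argues that $m_{2,x}<\infty$ here because exponential-type tails give all moments, while polynomial tails force $\sigma_*=1$ and hence $\alpha>4$.
\end{itemize}
With this bound, $\|u\|_{L^2}\le\rho^{-1}\|f\|_{L^2}$, and your $\Delta_y$ bound, the drift term is handled by your estimate $|\nabla\log\nuinf(y)|\lesssim 1+|y|$ together with $\int\bangle{y}^2\dd\nuinf<\infty$. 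That closes \eqref{eqn:MxL2}. The paper handles the drift differently: it proves $\int|\nabla\log\nuinf|^2\dd\nuinf<\infty$ by a cutoff argument using boundedness of $\nabla^2\log\nuinf$. Your drift bound is equally valid.

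\textbf{Step 3.} It inherits the same error. The cancellation in the lower-bound proof gives an $O(r\,m_r)$ bound only because $q_r$ is supported in $B(x_r,r/4)$. For a general mean-zero $f$ you get $M\sqrt{m_{2,x}}\|f\|_{L^2}$, not $M\|f\|_{L^1}$.

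Once corrected, your integration by parts gives $|\langle\mathcal M_xf,K^\top f\rangle|\le M^2m_{2,x}\rho^{-1}\|f\|_{L^2}^2$. This is a legitimate alternative to the paper's route:
\begin{itemize}
\item The paper pairs $\|\mathcal M_xf\|_{L^2}$ (mean zero) against an $L^2(\nuinf)$ bound for the centered $K^\top f$. It obtains that bound from the double difference $k(x,y)-k(x,z)-k(0,y)+k(0,z)$, which needs second moments of both measures.
\item Your version needs only the uniform gradient bound and $m_{2,x}$, but you must justify the integration by parts.
\end{itemize}
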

\begin{proof}
We first prove \eqref{eqn:MxH-1}. For $\psi_g=L_y^{-1}g$, weighted integration by parts gives
\begin{equation*}
 \nabla_x Kg(x)
 =\int_{\mathcal{Y}}\nabla_x\nabla_y k(x,y)\nabla_y\psi_g(y)\,
                        \mathrm d\nu_\infty(y).
\end{equation*}
Consequently,
$\|\nabla_xKg\|_{L^\infty}\le M\|g\|_{\dot H^{-1}(\nu_\infty)}$.
The same argument gives
$\|\nabla_yK^\top f\|_{L^\infty}\le M\|f\|_{\dot H^{-1}(\mu_\infty)}$.
These formulas extend by continuity to the homogeneous spaces.
The resolvent of either nonnegative self-adjoint generator has operator norm
at most $\rho^{-1}$ on its homogeneous negative Sobolev space. Therefore,
% \begin{equation*}
%  |\nabla_xKg(x)|
%  \le M\int|\nabla_y\psi_g|\,\mathrm d\nu_\infty
%  \le M\|g\|_{\dot{H}^{-1}(\nuinf)}.
% \end{equation*}
% Now for any function $\phi\in H^{-1}(\nuinf)$, we set $g= (\rho+\beta^{-1}L_y)^{-1} \phi$, then
\begin{equation*}
\|\mathcal{M}_y \phi\|_{\dot{H}^{-1}(\muinf)}= \|L_xK g\|_{\dot{H}^{-1}(\muinf)}=\|\nabla_x Kg\|_{L^2(\mu_\infty)}\le M\|g\|_{\dot{H}^{-1}(\nuinf)}\leq \frac{M}{\rho}\|\phi\|_{\dot{H}^{-1}(\nuinf)}.
\end{equation*}

We next prove \eqref{eqn:MxL2}. To prepare for this, we note that our Assumption \ref{ass:wpi} with $\alpha>4\sigma_*$ implies that $\muinf,\nuinf \in \mathcal{P}_2$. Indeed, if $\muinf$ has exponential tail, this is automatic; if $\muinf$ has algebraic tail, then $\sigma_*=1$ and $\alpha>4$, again $\muinf\in \mathcal{P}_2(\R^{d_x})$. We also note that $\muinf$ has finite Fisher information in the sense of 
\[ I(\muinf):=\int |\nabla \log \muinf|^2 \dd \muinf<\infty.\]
Indeed, the Hessian bound \eqref{eqn:bddk2} and equilibrium equation \eqref{eqn:equilibrium} means that $\nabla^2 \log \muinf$ must be bounded. Thus, let $\chi_R \in [0,1]$ be a cutoff function that is 1 in $B_R(0)$ and 0 in $B_{2R}^c(0)$ with $|\nabla \chi_R|\le \frac{2}{R}$, we have
\begin{align*}
    I_R&:= \int \chi_R^2|\nabla \log \muinf|^2 \dd \muinf  = -\int \chi_R^2 \Delta \log \muinf \dd \muinf -2\int \chi_R \nabla \chi_R\cdot \nabla \log \muinf \dd \muinf \\ & \le -\int \chi_R^2 \Delta \log \muinf \dd \muinf + \frac{1}{2}\int \chi_R^2|\nabla \log \muinf|^2 \dd \muinf + 2 \int |\nabla \chi_R|^2 \dd \muinf.
\end{align*}
The first term is uniformly bounded in $R$; the second term can be absorbed into left hand-side; while the third term converges to zero as $R\to\infty$. Hence $I_R$ is uniformly bounded in $R$, and Fatou's lemma allows us to pass to $R\to\infty$ to establish $I(\muinf)<\infty$. Similar estimates hold for $\nuinf$. Now take any $u\in L^2(\muinf)$ with mean zero, we have
\begin{align*}
    |\nabla_y K^\top u(y)|& = \Big|\int (\nabla_y k(x,y)-\nabla_y k(0,y)) u(x) \dd \muinf(x)\Big| \\ & \le M\int |x||u(x)| \dd \muinf \le M \sqrt{m_{2,x}}\|u\|_{L^2(\muinf)},
\end{align*} 
where $m_{2,x}:=\int |x|^2 \dd \muinf<\infty$. We similarly define $m_{2,y}:= \int |y|^2 \dd \nuinf<\infty$. Additionally, 
\[ |\Delta_y K^\top u(y)|\le M d_y \int |u(x)|\dd \muinf \le Md_y\|u\|_{L^2(\muinf)}.\]
Now take $u=(\rho+\beta^{-1}L_x)^{-1}f$, we arrive at
\begin{align*}
    \|\mathcal{M}_x f\|^2_{L^2(\nuinf)} & \le 2\Big(\|\Delta_y K^\top u\|_{L^2(\nuinf)}^2 + \|\nabla_y \log \nuinf\cdot \nabla_yK^\top u\|^2_{L^2(\nuinf)} \Big) \\ & \le 2\big(M^2 d_y^2+ M^2m_{2,x}I(\nuinf)\big)\|u\|^2_{L^2(\muinf)} \le \frac{2}{\rho}\big(M^2 d_y^2+ M^2m_{2,x}I(\nuinf)\big)\|f\|^2_{L^2(\muinf)}.
\end{align*}
This is \eqref{eqn:MxL2}.

We finally treat the mixed term in \eqref{eqn:Mxmix}. By Cauchy-Schwarz inequality, and the fact that $\mathcal{M}_x f$ has zero mean, it suffices to prove
\begin{equation}\label{eqn:KTfL2}
    \Big\|K^\top f - \int K^\top f \dd \nuinf \Big\|_{L^2(\nuinf)} \lesssim \|f\|_{L^2(\muinf)}.
\end{equation} 
The trick is to rewrite the expression using centrality of $f$, that is,
\begin{align*}
    \Big|K^\top f(y)-\int K^\top f \dd \nuinf\Big| & = \Big| \int \big(k(x,y)-k(x,z)\big) f(x)\dd\muinf(x) \dd \nuinf(z)\Big| \\ & = \Big| \int \big(k(x,y)-k(x,z)-k(0,y)+k(0,z)\big) f(x)\dd\muinf(x) \dd \nuinf(z)\Big| \\ & \le M\int |x||f(x)|\dd \muinf(x) \int |y-z|\dd \nuinf(z) \\ & \le M\sqrt{m_{2,x}}\|f\|_{L^2(\muinf)}\sqrt{2(|y|^2 + m_{2,y})}.
\end{align*}
This implies
\[ \int \Big|K^\top f(y)-\int K^\top f \dd \nuinf\Big|^2 \dd \nuinf(y) \le 4M^2m_{2,x}m_{2,y}\|f\|^2_{L^2(\muinf)},\] which is \eqref{eqn:KTfL2}. 
The estimates for $\mathcal M_y$ are identical.
\end{proof}

%We need further estimates on $\mathcal{M}_x$ and $\mathcal{M}_y$, which are unfortunately not implied by Lemma \ref{lem:M_bounds}.
%\begin{assumption}\label{ass:l2andcbounds}
%    There exists constants $M_{x,1}, M_{x,2}, M_{y,1}, M_{y,2}>0$ such that for all $f\in H^{-1}(\muinf),~ g\in H^{-1}(\nuinf)$,
    
%\end{assumption}
\begin{thm}\label{thm:eta-stability}
   Suppose Assumption~\ref{ass:wpi} holds with
$\alpha>4\sigma_*$. Fix $\eta,\rho>0$ and choose sufficiently small $R_0>0$ independent of $\eta$, and let $h_\tau$ solve
\eqref{eqn:dms_evol} with initial condition $h_0$ corresponding to
probability densities. There exists $\epsilon_*>0$, depending on
$\eta,\rho,R_0>0$ and the constants in the assumptions, such that if initial data $h_0$ satisfies
\[
\eta^{-1}\int h_{0,1}^2\langle x\rangle^\alpha\,d\mu_\infty
       +\int h_{0,2}^2\langle y\rangle^\alpha\,d\nu_\infty<\frac {R_0}{2},
\qquad
\eta^{-1}\|h_{0,1}\|_{L^2(\mu_\infty)}^2
       +\|h_{0,2}\|_{L^2(\nu_\infty)}^2\le\epsilon_*,
\]
    then we have the convergence (the constant $C>0$ depending on $\beta, R, c_{\rm WP}^x,c_{\rm WP}^y$ but not on $\eta$),
    \begin{equation*}
    \|h_\tau\|_{\mathcal{H}}^2\lesssim \left[\|h_0\|_{\mathcal{H}}^{-\frac{4\sigma_*}{\alpha-2\sigma_*}}+\frac{C\min\{\eta^{1/2},\eta^{-1/2}\} \tau}{\max\{\eta,1\}^{\frac{2\sigma_*}{\alpha-2\sigma_*}}}\right]^{-\frac{\alpha-2\sigma_*}{2\sigma_*}}.
    \end{equation*}
\end{thm}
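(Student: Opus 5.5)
The plan is to differentiate the modified energy $\mathscr H^\varepsilon(h_\tau)$ along \eqref{eqn:dms_evol} and show that, on a bootstrap interval where the weighted moments stay bounded, it satisfies a dissipation inequality of the form
\[
\frac{\dd}{\dd\tau}\mathscr H^\varepsilon(h_\tau)\le -c\min\{\eta^{1/2},\eta^{-1/2}\}\big(\|h_{\tau,1}\|_{L^2(\muinf)}^2+\|h_{\tau,2}\|_{L^2(\nuinf)}^2\big).
\]
Once this holds, the argument of \Cref{thm:decay_upper_bd} applies essentially verbatim. The interpolation \eqref{eqn:Zt_At} converts the $L^2$ dissipation into a power of the $\mathcal H$-norm, and the norm equivalence $\mathscr H^\varepsilon\asymp\|h\|_{\mathcal H}^2$ then closes a nonlinear ODE inequality. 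Because $h_1=\sqrt\eta f$, the moment bound on $h_1$ involves the factor $\eta^{-1}$. This rescaling produces the factor $\max\{\eta,1\}^{2\sigma_*/(\alpha-2\sigma_*)}$ in the denominator when \eqref{eqn:Zt_At} is applied with $R$ replaced by $\max\{\eta,1\}R_0$.

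\textbf{Step 1: the linear part.} By symmetry and skew-symmetry, the plain term satisfies
\[
\frac{\dd}{\dd\tau}\tfrac12\|h\|_{\mathcal H}^2=-\beta^{-1}\big(\eta^{-1/2}\|h_1\|_{L^2}^2+\eta^{1/2}\|h_2\|_{L^2}^2\big)+\langle\mathcal N(h),h\rangle_{\mathcal H}.
\]
For $0<\eta\le1$, the weak dissipation is in $h_2$, with coefficient $\eta^{1/2}$. We differentiate the cross term $\varepsilon_\eta\langle\mathcal M_xh_1,L_y^{-1}h_2\rangle$ and pick out the contribution from $-\mathcal L_a$ acting on $h_2$. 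This contribution is $\varepsilon_\eta\langle\mathcal M_xh_1,K^\top h_1\rangle$-type terms together with $-\varepsilon_\eta\langle \mathcal M_x \mathcal M_y$-type$\rangle$, and the resolvent structure makes the coercive piece $\langle L_yK^\top u, L_y^{-1}L_yK^\top h_1\rangle$ compare to $\|\nabla_y K^\top h_1\|^2$.

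Let me restate this more accurately, since the hypocoercive transfer actually runs from the well-dissipated component to the poorly dissipated one. For $\eta\le1$, $h_1$ is strongly dissipated and $h_2$ is not. We therefore choose the cross term so that the $\mathcal L_a$-contribution of $\partial_\tau h_1=\dots-L_xKh_2$ inserted into $\mathcal M_x$ produces $-\varepsilon_\eta\langle\mathcal M_xL_xKh_2,L_y^{-1}h_2\rangle$. Its sign and lower bound follow from writing $\mathcal M_xL_xK=L_yK^\top(\rho+\beta^{-1}L_x)^{-1}L_xK$ and using positivity of $(\rho+\beta^{-1}L_x)^{-1}L_x$. All remaining terms (those from $\mathcal L_s$, from $\mathcal L_a h_1$, and from the $h_2$-equation) are bounded by Lemma~\ref{lem:M_bounds}: \eqref{eqn:MxH-1} for the $\dotH^{-1}$ pairings, \eqref{eqn:MxL2} for $\eta^{-1/2}$-weighted $L^2$ terms via Young's inequality, and \eqref{eqn:Mxmix} for the $\langle\mathcal M_xh_1,K^\top h_1\rangle$ term. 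We then take $\varepsilon_\eta\sim\eta^{1/2}$ small, so that the error terms are absorbed into $\beta^{-1}\eta^{-1/2}\|h_1\|^2$ and the total dissipation is $\gtrsim\eta^{1/2}\|h\|_{L^2}^2$. The case $\eta>1$ is symmetric, with $\mathcal M_y$ and $\varepsilon_\eta\sim\eta^{-1/2}$.

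\textbf{Step 2: nonlinear terms and bootstrap.} The terms $\langle\mathcal N(h),h\rangle_{\mathcal H}$ and the cross term's nonlinear contribution are bounded using \eqref{eqn:nablaKft} and \eqref{eqn:MxH-1} by $C\|h\|_{\mathcal H}^2\|h\|_{L^2}$ times $\eta$-dependent factors, exactly as in \eqref{eqn:wangchizatkey}. We then split $\|h\|_{\mathcal H}^2$ as in \eqref{eqn:Z_split}, which uses $\alpha>4\sigma_*$, to absorb these terms into the dissipation once $\|h_0\|$ is small. The moment bound is propagated by rerunning the derivation of \eqref{eqn:Ealpha-diff-ineq} for \eqref{eqn:etamodGDA}, which only changes constants by factors of $\eta$, followed by the same Grönwall and bootstrap closure. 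This is where $\epsilon_*$ acquires its dependence on $\eta$.

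\textbf{Main obstacle.} The hardest step is the coercivity in Step 1. I must verify that the cross term yields dissipation genuinely controlling $\|h_2\|_{L^2}^2$, and not merely $\|\nabla_yK^\top\cdot\|$-type quantities. Without a spectral gap, $K$ need not be invertible, so full coercivity in $h_2$ may fail. My first attempt would keep the explicit $\eta^{1/2}\beta^{-1}\|h_2\|^2$ dissipation already present from $\mathcal L_s$, and use the cross term only to ensure its own errors are absorbable. Because $\varepsilon_\eta\lesssim\eta^{1/2}$, I then accept the rate $\min\{\eta^{1/2},\eta^{-1/2}\}$, which matches the stated bound.
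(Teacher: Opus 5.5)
Your proposal follows the paper's argument: the same resolvent-based modified energy with $\varepsilon_\eta\lesssim\min\{\eta^{1/2},\eta^{-1/2}\}$, and the nonpositive term $-\varepsilon_\eta\|(\rho+\beta^{-1}L_x)^{-1/2}L_xKh_{\tau,2}\|_{\dotH^{-1}(\muinf)}^2$ used only to pay for the cross term's own errors, so that the net rate $\min\{\eta^{1/2},\eta^{-1/2}\}$ comes from $\mathcal L_s$ alone; it also uses the remainder bound $C_\eta Z_\tau\sqrt{A_\tau}$ and weighted interpolation with $R$ replaced by $\max\{\eta,1\}R_0$ (closing through smallness of $Z_\tau$ with an $\eta$-dependent $\epsilon_*$, and rerunning the moment bootstrap for \eqref{eqn:etamodGDA}, is a sound variant of the paper's appeal to small $R_0$ and to \Cref{thm:decay_upper_bd}). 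The one term your Step~1 does not actually control is $-\frac{\varepsilon_\eta}{\beta\sqrt\eta}\langle\mathcal M_xL_xh_{\tau,1},L_y^{-1}h_{\tau,2}\rangle_{L^2(\nuinf)}$, which the stated bounds of Lemma~\ref{lem:M_bounds} do not handle (\eqref{eqn:MxH-1} would cost $\|\nabla_xh_{\tau,1}\|_{L^2(\muinf)}\|h_{\tau,2}\|_{\dotH^{-1}(\nuinf)}$); the paper rewrites it as $\langle h_{\tau,1},(\rho+\beta^{-1}L_x)^{-1}L_xKh_{\tau,2}\rangle_{L^2(\muinf)}$ and absorbs it into that nonpositive term using $(\rho+\beta^{-1}L_x)^{-2}\le\beta L_x^{-1}(\rho+\beta^{-1}L_x)^{-1}$ and Young's inequality with $\gamma=\sqrt{\eta\beta}/2$, while alternatively \eqref{eqn:KTfL2} applied to $u=(\rho+\beta^{-1}L_x)^{-1}L_xh_{\tau,1}$ bounds it by a multiple of $\eta^{-1/2}\varepsilon_\eta\|h_{\tau,1}\|_{L^2(\muinf)}\|h_{\tau,2}\|_{L^2(\nuinf)}$, which your choice $\varepsilon_\eta\sim\eta^{1/2}$ absorbs.
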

We prove Theorem~\ref{thm:eta-stability} in four steps. Sections \ref{appendix:small} and
\ref{appendix:large} estimate the linear terms in the two modified energies, \ref{appendix:error} controls
the nonlinear remainders, and \ref{appendix:convergence} closes the
argument.
\subsection{The small $\eta$ regime}\label{appendix:small}
We consider $0<\eta\leq 1$ and write $\eps = \eps_\eta,~ L^2(\nuinf) = L^2$ temporarily for notation simplicity. By differentiating  the modified energy combined with (\ref{eqn:dms_evol}), we have 
\begin{equation}
\begin{aligned}\label{eqn:dms_der1}
    \frac{d}{d\tau}\mathscr H^\varepsilon(h_\tau)
    &=
    -\langle \mathcal{L}_s h_\tau,h_\tau\rangle_{\mathcal H}
    + \mathcal R_x^\eps(h_\tau) -\frac{\eps}{\beta \sqrt{\eta}}
    \langle \mathcal{M}_x L_x h_{\tau,1},L_y^{-1}h_{\tau,2}\rangle_{L^2}
    \\
    &\quad
    -
    \eps
    \langle \mathcal{M}_x L_x K h_{\tau,2}, L_y^{-1}h_{\tau,2}\rangle_{L^2}
    -
    \frac{\eps\sqrt{\eta}}{\beta}
    \langle \mathcal{M}_x h_{\tau,1},h_{\tau,2}\rangle_{L^2} + \eps\langle \mathcal{M}_x h_{\tau,1},K^\top h_{\tau,1}\rangle_{L^2},
\end{aligned}
\end{equation}
where $\mathcal{R}_x^\eps$ is a nonlinear error 
\begin{equation}\label{eqn:error1}
    \mathcal R_x^\varepsilon(h)
    =
    \langle \mathcal N(h),h\rangle_{\mathcal H}
    +
    \varepsilon\langle \mathcal{M}_x\mathcal N(h)_1, L_y^{-1 }h_2\rangle_{L^2}
    +
    \varepsilon\langle \mathcal{M}_x h_1,L_y^{-1}\mathcal{N}_\eta(h)_2\rangle_{L^2}.
\end{equation}
% Writing $\psi_f=L_x^{-1}f$, the adjoint identity follows from
% \begin{equation*}
% \begin{aligned}
% \langle L_x K g,L_x^{-1}f\rangle_{L^2(\muinf)}
% &=\int_{\mathcal{Y}}\int_{\mathcal{X}} \psi_f(x)^{\top}
%        \nabla_x\nabla_y k(x,y)\nabla\psi_g(y)\,
%                       \mathrm d\mu_\infty(x)\mathrm d\nu_\infty(y)\\
% &=\langle g,L_y K^\top f\rangle_{H_y}.
% \end{aligned}
% \end{equation*}
% The double integral is absolutely convergent by (A2) and
% Cauchy--Schwarz. These formulas justify the extensions without requiring
% $Kg$ itself to belong to $L^2(\mu_\infty)$.

% \begin{equation}\label{eqn:DMS-energy-equiv}
%     \frac{1-\varepsilon}{2}\|h\|_{\mathcal H}^2
%     \le
%     \mathscr H_\Pi^\varepsilon(h)
%     \le
%     \frac{1+\varepsilon}{2}\|h\|_{\mathcal H}^2.
% \end{equation}
\noindent The dissipation part is
\begin{equation}
    \langle \mathcal{L}_s h_\tau,h_\tau\rangle_{\mathcal H} = \frac{1}{\beta}\Big(\eta^{-1/2}\|h_{\tau,1}\|_{L^2(\muinf)}^2+\eta^{1/2}\|h_{\tau,2}\|_{L^2(\nuinf)}^2\Big).
\end{equation}
We leave $R_x^\eps$ to be estimated later and now focus on the last four terms in (\ref{eqn:dms_der1}). First of all, spectral calculus gives $(\rho+\beta^{-1}L_x)^{-2}\leq \beta L_x^{-1} (\rho+\beta^{-1}L_x)^{-1}$, and hence
\begin{equation*}
\begin{aligned}
    \Big| 
    \langle \mathcal{M}_x L_x h_{\tau,1},L_y^{-1}h_{\tau,2}\rangle_{L^2}\Big|&=\Big| 
    \langle  h_{\tau,1},(\rho+\beta^{-1}L_x)^{-1}L_x K h_{\tau,2}\rangle_{L^2}\Big|\\
    &\leq \sqrt{\beta}\|h_{\tau,1}\|_{L^2(\muinf)}\|(\rho+\beta^{-1}L_x)^{-1/2} L_x K h_{\tau,2}\|_{\dot{H}^{-1}(\muinf)}\\
    &\leq \frac{\sqrt{\beta}}{4\gamma}\|h_{\tau,1}\|_{L^2(\muinf)}^2+ \gamma\sqrt{\beta}\|(\rho+\beta^{-1}L_x)^{-1/2} L_x K h_{\tau,2}\|_{\dot{H}^{-1}(\muinf)}^2,
\end{aligned}
\end{equation*}
with $\gamma>0$ to be chosen later. Secondly, 
\begin{equation*}
    -\eps \langle \mathcal{M}_x L_x K h_{\tau,2}, L_y^{-1}h_{\tau,2}\rangle_{L^2} = -\eps\|(\rho+\beta^{-1}L_x)^{-1/2} L_x K h_{\tau,2}\|_{\dot{H}^{-1}(\muinf)}^2.
\end{equation*}
Moreover, by \eqref{eqn:MxL2} and \eqref{eqn:Mxmix} of \Cref{lem:M_bounds}, we have
\begin{equation*}
\begin{aligned}
    \Big|-
    \frac{\eps\sqrt{\eta}}{\beta}
    \langle \mathcal{M}_x h_{\tau,1},h_{\tau,2}\rangle_{L^2} \Big|&\leq \frac{\eps\sqrt{\eta}}{\beta}\|\mathcal{M}_x h_{\tau,1}\|_{L^2(\nuinf)} \|h_{\tau,2}\|_{L^2(\nuinf)}\\
    & \leq \frac{\eps\sqrt{\eta}M_{x,1}}{2\beta}\| h_{\tau,1}\|_{L^2(\muinf)}^2+  \frac{\eps\sqrt{\eta}}{2\beta} \|h_{\tau,2}\|_{L^2(\nuinf)}^2
    \end{aligned}
\end{equation*}
and 
\begin{equation*}
    \Big| \eps\langle \mathcal{M}_x h_{\tau,1},K^\top h_{\tau,1}\rangle_{L^2}\Big|\leq \eps M_{x,2} \|h_{\tau,1}\|_{L^2(\muinf)}^2.
\end{equation*}
Hence, collecting all bounds above together and rearrange, we obtain that
\begin{equation}\label{app:eqnsmall}
\begin{aligned}
\frac{d}{d\tau}\mathscr H^\varepsilon(h_\tau)&\leq -\eta^{-1/2}\Big(\frac{1}{\beta}-\frac{\eps}{4\gamma\sqrt{\beta}}-\frac{\eps\eta M_{x,1}}{2\beta}-\eps\sqrt{\eta}M_{x,2}\Big) \| h_{\tau,1}\|_{L^2(\muinf)}^2\\
&-\frac{\sqrt{\eta}}{\beta}\big(1-\frac{\eps}{2}\big)\|h_{\tau,2}\|_{L^2(\nuinf)}^2+\eps \big(\frac{\gamma}{\sqrt{\eta \beta}}-1\big)\|(\rho+\beta^{-1}L_x)^{-1/2} L_x K h_{\tau,2}\|_{\dot{H}^{-1}(\muinf)}^2\\
&+ \mathcal R_x^\varepsilon(h_\tau).
\end{aligned}
\end{equation}
Since $0<\eta\leq 1$, we may choose $\gamma=\sqrt{\eta\beta}/2$, and set sufficiently small $\eps_\eta>0$, so that it satisfies 
\begin{equation*}
    \eps_\eta\leq 1/2, \quad\text{and }\quad \eps_\eta \Big(\frac{1}{2\sqrt{\eta}}+\frac{\eta M_{x,1}}{2}+\beta \sqrt{\eta} M_{x,2}\Big)\leq 1/4
\end{equation*}
then the third term in (\ref{app:eqnsmall}) can be dropped, and we get
\begin{equation}
    \frac{d}{d\tau}\mathscr H^\varepsilon(h_\tau)\leq -\frac{3\sqrt{\eta}}{4\beta} \|h_\tau\|_{L^2}^2 + \mathcal R_x^\varepsilon(h_\tau).
\end{equation}
Here and below, we have $\|h\|_{L^2}^2 = \|h_1\|_{L^2{\muinf}}^2+\|h_2\|_{L^2{\nuinf}}^2$.
\subsection{The large $\eta$ regime}\label{appendix:large}
We consider $\eta>1$ and  write $\eps = \eps_\eta,~ L^2(\muinf) = L^2$ temporarily for notation simplicity. Similar as the previous section, but instead we consider $\mathscr H^\eps(h)=\frac{1}{2}\|h\|_{\mathcal{H}}^2-\eps\langle h_1, L_x^{-1}\mathcal{M}_y h_2\rangle_{L^2}$ for large $\eta$, differentiation then gives 
\begin{equation}
\begin{aligned}\label{eqn:dms_der2}
    \frac{d}{d\tau}\mathscr H^\varepsilon(h_\tau)
    &=
    -\langle \mathcal{L}_s h_\tau,h_\tau\rangle_{\mathcal H}
    + \mathcal R_y^\eps(h_\tau) +\frac{\eps}{\beta \sqrt{\eta}}
    \langle  L_x h_{\tau,1},L_x^{-1} \mathcal{M}_y h_{\tau,2}\rangle_{L^2}
    + \eps
    \langle  L_x K h_{\tau,2}, L_x^{-1}\mathcal{M}_y h_{\tau,2}\rangle_{L^2}\\
    &\quad 
    +\frac{\eps\sqrt{\eta}}{\beta}
    \langle  h_{\tau,1},L_x^{-1}\mathcal{M}_y L_y h_{\tau,2}\rangle_{L^2} - \eps\langle  h_{\tau,1},L_x^{-1}\mathcal{M}_y L_y K^\top h_{\tau,1}\rangle_{L^2},
\end{aligned}
\end{equation}
where 
\begin{equation}\label{eqn:error2}
    \mathcal R_y^\varepsilon(h)
    =
    \langle \mathcal N(h),h\rangle_{\mathcal H}
    -
    \varepsilon\langle \mathcal N(h)_1, L_x^{-1 }\mathcal{M}_y h_2\rangle_{L^2}
    -
    \varepsilon\langle  h_1,L_x^{-1}\mathcal{M}_y\mathcal{N}_\eta(h)_2\rangle_{L^2}.
\end{equation}
Here, the last four terms in (\ref{eqn:dms_der2}) can treated similarly as in the previous subsection:
\begin{equation*}
\begin{aligned}
&\Big|\frac{\eps}{\beta \sqrt{\eta}}
    \langle  L_x h_{\tau,1},L_x^{-1} \mathcal{M}_y h_{\tau,2}\rangle_{L^2}\Big|\leq  \frac{\eps}{2\beta\sqrt{\eta}}\| h_{\tau,1}\|_{L^2(\muinf)}^2+  \frac{\eps M_{y,1}}{2\beta\sqrt{\eta}} \|h_{\tau,2}\|_{L^2(\nuinf)}^2,\\
&\Big|\eps
    \langle  L_x K h_{\tau,2}, L_x^{-1}\mathcal{M}_y h_{\tau,2}\rangle_{L^2}\Big|\leq \eps M_{y,2}\|h_{\tau,2}\|_{L^2(\nuinf)}^2,\\
    &\Big|
    \langle  h_{\tau,1},L_x^{-1}\mathcal{M}_y L_y h_{\tau,2}\rangle_{L^2} \Big|\leq \frac{\sqrt{\beta}}{4\gamma}\|h_{\tau,2}\|_{L^2(\nuinf)}^2+ \gamma\sqrt{\beta}\|(\rho+\beta^{-1}L_y)^{-1/2} L_y K^\top h_{\tau,1}\|_{\dot{H}^{-1}(\nuinf)}^2,\\
    &- \eps\langle  h_{\tau,1},L_x^{-1}\mathcal{M}_y L_y K^\top h_{\tau,1}\rangle_{L^2} = -\eps\|(\rho+\beta^{-1}L_y)^{-1/2} L_y K^\top h_{\tau,1}\|_{\dot{H}^{-1}(\nuinf)}^2.
\end{aligned}
\end{equation*}
Hence, with all bounds above, we have 
\begin{equation}
\begin{aligned}
\frac{d}{d\tau}\mathscr H^\varepsilon(h_\tau)&\leq -\sqrt{\eta}\Big(\frac{1}{\beta}-\frac{\eps}{4\gamma\sqrt{\beta}}-\frac{\eps M_{y,1}}{2\beta \eta }-\frac{\eps M_{y,2}}{\sqrt{\eta}}\Big) \| h_{\tau,2}\|_{L^2(\nuinf)}^2\\
&-\frac{1}{\beta\sqrt{\eta}}\big(1-\frac{\eps}{2}\big)\|h_{\tau,1}\|_{L^2(\muinf)}^2+\eps \big(\frac{\gamma\sqrt{\eta}}{\sqrt{\beta}}-1\big)\|(\rho+\beta^{-1}L_y)^{-1/2} L_y K^\top h_{\tau,1}\|_{\dot{H}^{-1}(\nuinf)}^2\\
&+ \mathcal R_y^\eps(h_\tau).
\end{aligned}
\end{equation}
Now we set $\gamma = \frac{\sqrt{\beta}}{2\sqrt{\eta}}$, and choose sufficiently small $\eps_\eta>0$ such that
\begin{equation*}
    \eps_\eta\leq 1/2, \quad\text{and }\quad \eps_\eta \Big(\frac{\sqrt{\eta}}{2}+\frac{ M_{y,1}}{2\eta}+\frac{\beta  M_{y,2}}{\sqrt{\eta}}\Big)\leq 1/4,
\end{equation*}
then we can conclude that
\begin{equation}
    \frac{d}{d\tau}\mathscr H^\varepsilon(h_\tau)\leq -\frac{3}{4\beta\sqrt{\eta}} \|h_\tau\|_{L^2}^2 + \mathcal R_y^\varepsilon(h_\tau).
\end{equation}
\subsection{Control the error $\mathcal{R}^\varepsilon$}\label{appendix:error}
We know that with (\ref{eqn:bddk2}) in Assumption \ref{ass:wpi}, 
\begin{equation*}
\begin{aligned}
    \|\mathcal{N}(h)_1\|_{\dot{H}^{-1}(\muinf)}&\leq \frac{M}{\sqrt{\eta}}
\|h_1\|_{L^2(\muinf)}\|h_2\|_{\dot{H}^{-1}(\nuinf)},\\
    \|\mathcal{N}(h)_2\|_{\dot{H}^{-1}(\nuinf)}&\leq M
    \|h_2\|_{L^2(\nuinf)}\|h_1\|_{\dot{H}^{-1}(\muinf)}.
  \end{aligned}
\end{equation*}
When $0<\eta\leq 1$, Lemma \ref{lem:M_bounds} gives that,
\begin{equation*}
    \begin{aligned}
        |R_x^\eps(h)|&\leq |\langle \mathcal N(h),h\rangle_{\mathcal H}|
    +
    \eps_\eta|\langle \mathcal{M}_x\mathcal N(h)_1, L_y^{-1 }h_2\rangle_{L^2}|
    +
    \eps_\eta|\langle \mathcal{M}_x h_1,L_y^{-1}\mathcal{N}_\eta(h)_2\rangle_{L^2}|\\
    &\leq M \|h_1\|_{\dot{H}^{-1}(\muinf)}\|h_2\|_{\dot{H}^{-1}(\nuinf)}\big(\eta^{-1/2}\|h_1\|_{L^2(\muinf)}+\|h_2\|_{L^2(\nuinf)}\big)\\
    &\quad +\frac{\eps_\eta M^2}{\rho \sqrt{\eta}}\|h_1\|_{L^2(\muinf)}\|h_2\|_{\dot{H}^{-1}(\nuinf)}^2 + \frac{\eps_\eta M^2}{\rho} \|h_2\|_{L^2(\nuinf)}\|h_1\|_{\dot{H}^{-1}(\muinf)}^2.
    \end{aligned}
\end{equation*}
And for $\eta>1$, we also have the same uppper bound since
\begin{align*}
\eps_\eta|\langle \mathcal N(h)_1,L_x^{-1}\mathcal M_yh_2\rangle_{L^2(\mu_\infty)}|
&\le\frac{\eps_\eta M}\rho
       \|\mathcal N(h)_1\|_{\dot H^{-1}(\mu_\infty)}
       \|h_2\|_{\dot H^{-1}(\nu_\infty)}\\
&\le\frac{\eps_\eta M^2}{\rho\sqrt\eta}
       \|h_1\|_{L^2(\mu_\infty)}
       \|h_2\|_{\dot H^{-1}(\nu_\infty)}^2,\\
\eps_\eta|\langle h_1,L_x^{-1}\mathcal M_y\mathcal N(h)_2\rangle_{L^2(\mu_\infty)}|
&\le\frac{\eps_\eta M}\rho
       \|h_1\|_{\dot H^{-1}(\mu_\infty)}
       \|\mathcal N(h)_2\|_{\dot H^{-1}(\nu_\infty)}\\
&\le\frac{\eps_\eta M^2}\rho
       \|h_2\|_{L^2(\nu_\infty)}
       \|h_1\|_{\dot H^{-1}(\mu_\infty)}^2.
\end{align*}
In particular, the different signs in the two definitions of the modified
energy do not affect the absolute remainder estimates.

To summarize, we utilize the same notation as in the proof of Theorem \ref{thm:decay_upper_bd} and set $Z=\|h\|_{\mathcal H}^2, A=\|h_1\|_{L^2(\muinf)}^2+\|h_2\|_{L^2(\nuinf)}^2$.
By Cauchy-Schwarz inequality, 
\[
\|h_1\|_{\dot H^{-1}(\mu_\infty)}
\|h_2\|_{\dot H^{-1}(\nu_\infty)}\le\frac Z2,\qquad
\eta^{-1/2}\|h_1\|_{L^2(\mu_\infty)}+\|h_2\|_{L^2(\nu_\infty)}
 \le\sqrt{1+\eta^{-1}}\sqrt A.
\]
Hence, both error terms obey
\begin{equation}\label{eqn:error-summary}
|\mathcal R_x^\varepsilon(h_{\tau})|,\ |\mathcal R_y^\varepsilon(h_{\tau})|
 \leq \left(\frac M2+\frac{\eps_\eta M^2}\rho\right)
          \sqrt{1+\eta^{-1}}\,Z\sqrt A
 =:C_\eta Z\sqrt A.
\end{equation}
Together with Sections \ref{appendix:small} and \ref{appendix:large}, this gives, in either regime,
\begin{equation}\label{eq:app-master-energy}
\frac d{d\tau}\mathscr H^\varepsilon(h_\tau)
 \le-a_\eta \|h_\tau\|_{L^2}^2+C_\eta Z_\tau\sqrt{A_\tau},\qquad
a_\eta:=\frac{3}{4\beta}\min\{\eta^{1/2},\eta^{-1/2}\}.
\end{equation}
Here $Z_\tau=\|h_\tau\|_{\mathcal H}^2$ and $A_\tau=\|h_\tau\|_{L^2}^2$.
Smallness of $Z_\tau$ alone does not directly absorb the last term, since
there is no spectral gap. The weighted interpolation established next is
the required additional ingredient.

\subsection{Algebraic convergence}\label{appendix:convergence}
Due the rescaling, let us also introduce
\begin{equation*}
    E_{\alpha}^\eta(\tau):=\eta^{-1}\int h_{\tau,1}^2 \bangle{x}^\alpha \dd\mu_\infty(x) +\int h_{\tau,2}^2 \bangle{y}^\alpha \dd\nu_\infty(y),
\end{equation*}
and $A_\tau \leq \max\{\eta,1\} E_{\alpha}^\eta(\tau)$. 
With weighted Poincar\'e inequalities (\ref{eqn:cwpx}), (\ref{eqn:cwpy}) in Assumption \ref{ass:wpi}, we have
\begin{equation*}
    \|h_{\tau,1}\|_{\dot H^{-1}(\mu_\infty)}^2
\le(c_{\rm WP}^x)^{-1}
 \left(\int h_{\tau,1}^2\langle x\rangle^\alpha\,d\mu_\infty\right)^{\frac{2\sigma_*}{\alpha}}
 \|h_{\tau,1}\|_{L^2(\mu_\infty)}^{2(1-2\sigma_*/\alpha)}.
\end{equation*}
Fix $R_0>0$ and suppose $E_\alpha^\eta(0)\le R_0/2$. Then there exists $\epsilon_*>0$ such that, if
\[
\eta^{-1}\|h_{0,1}\|_{L^2(\muinf)}^2+\|h_{0,2}\|_{L^2(\nuinf)}^2\le\epsilon_*,
\]
\Cref{thm:decay_upper_bd} gives $E_\alpha^\eta(\tau)\le R_0$ for all $\tau\ge0$. Consequently,
\begin{equation}
    Z_\tau\le\frac{\big(\max\{\eta,1\}R_0\big)^{\frac{2\sigma_*}{\alpha}}}{\min\{\cwp^x,\cwp^y\}}\,A_\tau^{1-\frac{2\sigma_*}{\alpha}},
\end{equation}
and therefore
\begin{equation}
    C_\eta Z_\tau\sqrt{A_\tau}\le\frac{C_\eta\big(\max\{\eta,1\}R_0\big)^{\frac{2\sigma_*}{\alpha}}}{\min\{\cwp^x,\cwp^y\}}\,A_\tau^{\frac32-\frac{2\sigma_*}{\alpha}}.
\end{equation}
Since $\alpha>4\sigma_*$, the power of $A_\tau$ above is strictly larger than $1$. Using $A_\tau\le \max\{\eta,1\}R_0$, we can choose $R_0$ sufficiently small, independently of $\eta$, because
\[
C_\eta\big(\max\{\eta,1\}R_0\big)^{\frac{2\sigma_*}{\alpha}}A_\tau^{\frac12-\frac{2\sigma_*}{\alpha}}
\lesssim M\sqrt{R_0}\,\max\{\eta,1\}^{1/2}\sqrt{1+\eta^{-1}}
\lesssim M\sqrt{R_0}.
\]
This yields
\begin{equation}
    \frac{\dd}{\dd\tau}\mathscr H^\varepsilon(h_\tau)
    \le -\frac{a_\eta}{2}\|h_\tau\|_{L^2}^2
    \le -\frac{a_\eta}{2}\,\frac{\big(\min\{\cwp^x,\cwp^y\}\big)^{\frac{\alpha}{\alpha-2\sigma_*}}}{\big(\max\{\eta,1\}R_0\big)^{\frac{2\sigma_*}{\alpha-2\sigma_*}}}\,Z_\tau^{\frac{\alpha}{\alpha-2\sigma_*}}.
\end{equation}
By the norm equivalence between $\mathscr H^\varepsilon(h_\tau)$ and $Z_\tau$, we can find a constant $C>0$ depending on $\beta, R_0, c_{\rm WP}^x,c_{\rm WP}^y$ such that 
\begin{equation}
    \mathscr H^\varepsilon(h_\tau)\leq \left[\mathscr H^\varepsilon(h_0)^{-\frac{2\sigma_*}{\alpha-2\sigma_*}}+\frac{C\min\{\eta^{1/2},\eta^{-1/2}\} \tau}{\max\{\eta,1\}^{\frac{2\sigma_*}{\alpha-2\sigma_*}}}\right]^{-\frac{\alpha-2\sigma_*}{2\sigma_*}},
\end{equation}
which implies that
\begin{equation}
    \|h_\tau\|_{\mathcal{H}}^2\lesssim \left[\|h_0\|_{\mathcal{H}}^{-\frac{4\sigma_*}{\alpha-2\sigma_*}}+\frac{C\min\{\eta^{1/2},\eta^{-1/2}\} \tau}{\max\{\eta,1\}^{\frac{2\sigma_*}{\alpha-2\sigma_*}}}\right]^{-\frac{\alpha-2\sigma_*}{2\sigma_*}}
\end{equation} \hfill $\Box$

\bibliographystyle{abbrv} 
\bibliography{references}

\begin{thebibliography}{10}

\bibitem{ambrosio2008gradient}
L.~Ambrosio, N.~Gigli, and G.~Savar{\'e}.
\newblock {\em Gradient flows in metric spaces and in the space of probability
  measures}.
\newblock Springer Science \& Business Media, 2008.

\bibitem{an2025convergence}
J.~An and J.~Lu.
\newblock Convergence of two-timescale gradient descent ascent dynamics:
  finite-dimensional and mean-field perspectives.
\newblock {\em arXiv preprint arXiv:2501.17122}, 2025.

\bibitem{bakry_analysis_2014}
D.~Bakry, I.~Gentil, and M.~Ledoux.
\newblock {\em Analysis and geometry of {Markov} diffusion operators}.
\newblock Springer, Cham; New York, 2014.

\bibitem{blanchet2007hardy}
A.~Blanchet, M.~Bonforte, J.~Dolbeault, G.~Grillo, and J.-L. V{\'a}zquez.
\newblock {H}ardy--{P}oincar{\'e} inequalities and applications to nonlinear
  diffusions.
\newblock {\em Comptes rendus. Math{\'e}matique}, 344(7):431--436, 2007.

\bibitem{bobkov2009weighted}
S.~G. Bobkov and M.~Ledoux.
\newblock Weighted {P}oincar{\'e}-type inequalities for {C}auchy and other
  convex measures.
\newblock {\em The Annals of Probability}, 37(2):403--427, 2009.

\bibitem{bouin2021hypocoercivity}
E.~Bouin, J.~Dolbeault, L.~Lafleche, and C.~Schmeiser.
\newblock Hypocoercivity and sub-exponential local equilibria.
\newblock {\em Monatshefte f{\"u}r Mathematik}, 194(1):41--65, 2021.

\bibitem{bouin2023mathrm}
E.~Bouin, J.~Dolbeault, and L.~Ziviani.
\newblock ${L}^2$ hypocoercivity methods for kinetic {F}okker-{P}lanck
  equations with factorised {G}ibbs states.
\newblock In {\em INdAM Meeting: Kolmogorov Operators and their Applications
  Workshop}, pages 23--56. Springer, 2022.

\bibitem{brevsar2026subexponential}
M.~Bre{\v{s}}ar and A.~Mijatovi{\'c}.
\newblock Subexponential lower bounds for f-ergodic {M}arkov processes.
\newblock {\em Probability Theory and Related Fields}, 194(1):997--1054, 2026.

\bibitem{brigati2024explicit}
G.~Brigati, G.~Stoltz, A.~Q. Wang, and L.~Wang.
\newblock Explicit convergence rates of underdamped langevin dynamics under
  weighted and weak poincar$\backslash$'e--lions inequalities.
\newblock {\em arXiv preprint arXiv:2407.16033, to appear in Annals of Applied
  Probability}, 2024.

\bibitem{busoniu2008comprehensive}
L.~Busoniu, R.~Babuska, and B.~De~Schutter.
\newblock A comprehensive survey of multiagent reinforcement learning.
\newblock {\em IEEE Transactions on Systems, Man, and Cybernetics, Part C
  (Applications and Reviews)}, 38(2):156--172, 2008.

\bibitem{cai2024convergence}
Y.~Cai, S.~Mitra, X.~Wang, and A.~Wibisono.
\newblock On the convergence of min-max {L}angevin dynamics and algorithm.
\newblock In {\em The Thirty Eighth Annual Conference on Learning Theory}.
  PMLR, 2025.

\bibitem{cao2020kinetic}
C.~Cao.
\newblock The kinetic {F}okker--{P}lanck equation with weak confinement force.
\newblock {\em Communications in Mathematical Sciences}, 17(8):2281--2308,
  2019.

\bibitem{carrillo2020long}
J.~A. Carrillo, R.~S. Gvalani, G.~A. Pavliotis, and A.~Schlichting.
\newblock Long-time behaviour and phase transitions for the {Mckean–Vlasov}
  equation on the torus.
\newblock {\em Archive for Rational Mechanics and Analysis}, 235:635–690,
  2020.

\bibitem{cattiaux2010functional}
P.~Cattiaux, N.~Gozlan, A.~Guillin, and C.~Roberto.
\newblock Functional inequalities for heavy tailed distributions and
  application to isoperimetry.
\newblock {\em Electronic Journal of Probability}, 15:346--385, 2010.

\bibitem{chen2025uniform}
F.~Chen, Z.~Ren, and S.~Wang.
\newblock {Uniform-in-time propagation of chaos for mean field Langevin
  dynamics}.
\newblock {\em Annales de l'Institut Henri Poincaré, Probabilités et
  Statistiques}, 61(4):2357 -- 2404, 2025.

\bibitem{chewi2026uniform}
S.~Chewi, A.~Nitanda, and M.~S. Zhang.
\newblock Uniform-in-\({N}\) log-sobolev inequality for the mean-field langevin
  dynamics with convex energy.
\newblock {\em SIAM Journal on Mathematical Analysis}, 58(4):3652--3668, 2026.

\bibitem{chizat2022meanfield}
L.~Chizat.
\newblock Mean-field {L}angevin dynamics: Exponential convergence and
  annealing.
\newblock {\em Transactions on Machine Learning Research}, 2022.

\bibitem{chizat2018global}
L.~Chizat and F.~Bach.
\newblock On the global convergence of gradient descent for over-parameterized
  models using optimal transport.
\newblock {\em Advances in neural information processing systems}, 31, 2018.

\bibitem{conforti2023game}
G.~Conforti, A.~Kazeykina, and Z.~Ren.
\newblock Game on random environment, mean-field langevin system, and neural
  networks.
\newblock {\em Mathematics of Operations Research}, 48(1):78--99, 2023.

\bibitem{conger2024coupled}
L.~Conger, F.~Hoffmann, E.~Mazumdar, and L.~J. Ratliff.
\newblock Coupled {W}asserstein gradient flows for min-max and cooperative
  games.
\newblock {\em arXiv preprint arXiv:2411.07403}, 2024.

\bibitem{conger2025monotone}
L.~Conger, F.~Hoffmann, E.~Mazumdar, and L.~J. Ratliff.
\newblock Monotone multispecies flows.
\newblock {\em arXiv preprint arXiv:2506.22947}, 2025.

\bibitem{cormier2025stability}
Q.~Cormier.
\newblock {On the stability of the invariant probability measures of
  McKean–Vlasov equations}.
\newblock {\em Annales de l'Institut Henri Poincaré, Probabilités et
  Statistiques}, 61(4):2405 -- 2429, 2025.

\bibitem{domingo2020mean}
C.~Domingo-Enrich, S.~Jelassi, A.~Mensch, G.~Rotskoff, and J.~Bruna.
\newblock A mean-field analysis of two-player zero-sum games.
\newblock {\em Advances in neural information processing systems},
  33:20215--20226, 2020.

\bibitem{glicksberg1952further}
I.~L. Glicksberg.
\newblock A further generalization of the {K}akutani fixed theorem, with
  application to {N}ash equilibrium points.
\newblock {\em Proceedings of the American Mathematical Society},
  3(1):170--174, 1952.

\bibitem{goodfellow2014generative}
I.~J. Goodfellow, J.~Pouget-Abadie, M.~Mirza, B.~Xu, D.~Warde-Farley, S.~Ozair,
  A.~Courville, and Y.~Bengio.
\newblock Generative adversarial nets.
\newblock {\em Advances in neural information processing systems}, 27, 2014.

\bibitem{grothaus2019weak}
M.~Grothaus and F.-Y. Wang.
\newblock Weak {P}oincar{\'e} inequalities for convergence rate of degenerate
  diffusion processes.
\newblock {\em The Annals of Probability}, 47(5):2930--2952, 2019.

\bibitem{hsieh2019finding}
Y.-P. Hsieh, C.~Liu, and V.~Cevher.
\newblock Finding mixed nash equilibria of generative adversarial networks.
\newblock In {\em International Conference on Machine Learning}, pages
  2810--2819. PMLR, 2019.

\bibitem{hu2021mean}
K.~Hu, Z.~Ren, D.~Šiška, and {\L}.~Szpruch.
\newblock {Mean-field Langevin dynamics and energy landscape of neural
  networks}.
\newblock {\em Annales de l'Institut Henri Poincaré, Probabilités et
  Statistiques}, 57(4):2043 -- 2065, 2021.

\bibitem{hu2019subexponential}
S.~Hu and X.~Wang.
\newblock Subexponential decay in kinetic {F}okker--{P}lanck equation: Weak
  hypocoercivity.
\newblock {\em Bernoulli}, 25(1):174--188, 2019.

\bibitem{isobe2025gradient}
N.~Isobe and S.~Shimoyama.
\newblock On gradient descent-ascent flows in metric spaces.
\newblock {\em arXiv preprint arXiv:2506.20258}, 2025.

\bibitem{jordan1998variational}
R.~Jordan, D.~Kinderlehrer, and F.~Otto.
\newblock The variational formulation of the {F}okker--{P}lanck equation.
\newblock {\em SIAM journal on mathematical analysis}, 29(1):1--17, 1998.

\bibitem{kavian2021fokker}
O.~Kavian, S.~Mischler, and M.~Ndao.
\newblock The {Fokker-Planck} equation with subcritical confinement force.
\newblock {\em Journal de Math{\'e}matiques Pures et Appliqu{\'e}es},
  151:171--211, 2021.

\bibitem{kim2024symmetric}
J.~Kim, K.~Yamamoto, K.~Oko, Z.~Yang, and T.~Suzuki.
\newblock Symmetric mean-field {L}angevin dynamics for distributional minimax
  problems.
\newblock In {\em International Conference on Learning Representations}, volume
  2024, pages 55056--55098, 2024.

\bibitem{lascu2024fisher}
R.-A. Lascu, M.~B. Majka, and {\L}.~Szpruch.
\newblock A {Fisher-Rao} gradient flow for entropic mean-field min-max games.
\newblock {\em Transactions on Machine Learning Research}, 2024.

\bibitem{lascu2025entropic}
R.-A. Lascu, M.~B. Majka, and {\L}.~Szpruch.
\newblock Entropic mean-field min--max problems via best response flow.
\newblock {\em Applied Mathematics \& Optimization}, 91(2):48, 2025.

\bibitem{liu2020poincare}
Y.~Liu.
\newblock {The Poincaré inequality and quadratic transportation-variance
  inequalities}.
\newblock {\em Electronic Journal of Probability}, 25(none):1 -- 16, 2020.

\bibitem{liu2025convergence}
Z.~Liu, F.~Liu, R.~Gao, and S.~Li.
\newblock Convergence of mean-field {L}angevin stochastic descent-ascent for
  distributional minimax optimization.
\newblock In {\em Forty-second International Conference on Machine Learning},
  2025.

\bibitem{lu2023two}
Y.~Lu.
\newblock Two-scale gradient descent ascent dynamics finds mixed {N}ash
  equilibria of continuous games: A mean-field perspective.
\newblock In {\em International Conference on Machine Learning}, pages
  22790--22811. PMLR, 2023.

\bibitem{lu2025convergence}
Y.~Lu and P.~Monmarch{\'e}.
\newblock Convergence of time-averaged mean field gradient descent dynamics for
  continuous multi-player zero-sum games.
\newblock {\em arXiv preprint arXiv:2505.07642}, 2025.

\bibitem{ma2022provably}
C.~Ma and L.~Ying.
\newblock Provably convergent quasistatic dynamics for mean-field two-player
  zero-sum games.
\newblock In {\em International Conference on Learning Representations}, 2022.

\bibitem{madry2017towards}
A.~Madry, A.~Makelov, L.~Schmidt, D.~Tsipras, and A.~Vladu.
\newblock Towards deep learning models resistant to adversarial attacks.
\newblock In {\em International Conference on Learning Representations}, 2018.

\bibitem{mei2018mean}
S.~Mei, A.~Montanari, and P.-M. Nguyen.
\newblock A mean field view of the landscape of two-layer neural networks.
\newblock {\em Proceedings of the National Academy of Sciences},
  115(33):E7665--E7671, 2018.

\bibitem{monmarche2025local}
P.~Monmarch{\'e} and J.~Reygner.
\newblock Local convergence rates for {Wasserstein gradient flows and
  McKean-Vlasov} equations with multiple stationary solutions.
\newblock {\em Probability Theory and Related Fields}, pages 1--59, 2025.

\bibitem{mourrat2026pl}
J.-C. Mourrat.
\newblock {PL} conditions do not guarantee convergence of gradient
  descent-ascent dynamics.
\newblock {\em arXiv preprint arXiv:2602.16517}, 2026.

\bibitem{mourrat2026oscillating}
J.-C. Mourrat and L.~Pillaud-Vivien.
\newblock Oscillating solutions to the mean-field {L}angevin descent-ascent
  flow.
\newblock {\em arXiv preprint arXiv:2604.11134}, 2026.

\bibitem{Muckenhoupt1972}
B.~Muckenhoupt.
\newblock Hardy's inequality with weights.
\newblock {\em Studia Mathematica}, 44(1):31--38, 1972.

\bibitem{nitanda2022convex}
A.~Nitanda, D.~Wu, and T.~Suzuki.
\newblock Convex analysis of the mean field {L}angevin dynamics.
\newblock In G.~Camps-Valls, F.~J.~R. Ruiz, and I.~Valera, editors, {\em
  Proceedings of The 25th International Conference on Artificial Intelligence
  and Statistics}, volume 151 of {\em Proceedings of Machine Learning
  Research}, pages 9741--9757. PMLR, 28--30 Mar 2022.

\bibitem{peyre2018comparison}
R.~Peyre.
\newblock Comparison between {$W^2$} distance and {$\dot{H}^{-1}$} norm, and
  localization of {W}asserstein distance.
\newblock {\em ESAIM: Control, Optimisation and Calculus of Variations},
  24(4):1489--1501, 2018.

\bibitem{rockner2001weak}
M.~R{\"o}ckner and F.-Y. Wang.
\newblock Weak {P}oincar{\'e} inequalities and ${L^2}$-convergence rates of
  {M}arkov semigroups.
\newblock {\em Journal of Functional Analysis}, 185(2):564--603, 2001.

\bibitem{seo2026local}
G.~Seo, M.~Shin, P.~Monmarch{\'e}, and B.~Choi.
\newblock Local exponential stability of mean-field {L}angevin descent-ascent
  in {W}asserstein space.
\newblock {\em arXiv preprint arXiv:2602.01564}, 2026.

\bibitem{sinha2017certifying}
A.~Sinha, H.~Namkoong, R.~Volpi, and J.~Duchi.
\newblock Certifying some distributional robustness with principled adversarial
  training.
\newblock In {\em International Conference on Learning Representations}, 2018.

\bibitem{suzuki2023uniform}
T.~Suzuki, A.~Nitanda, and D.~Wu.
\newblock Uniform-in-time propagation of chaos for the mean-field gradient
  {L}angevin dynamics.
\newblock In {\em International Conference on Learning Representations}, 2023.

\bibitem{suzuki2023convergence}
T.~Suzuki, D.~Wu, and A.~Nitanda.
\newblock Convergence of mean-field langevin dynamics: time-space
  discretization, stochastic gradient, and variance reduction.
\newblock In A.~Oh, T.~Naumann, A.~Globerson, K.~Saenko, M.~Hardt, and
  S.~Levine, editors, {\em Advances in Neural Information Processing Systems},
  volume~36, pages 15545--15577. Curran Associates, Inc., 2023.

\bibitem{wang2024open}
G.~Wang and L.~Chizat.
\newblock Open problem: Convergence of single-timescale mean-field langevin
  descent-ascent for two-player zero-sum games.
\newblock In {\em The Thirty Seventh Annual Conference on Learning Theory},
  pages 5345--5350. PMLR, 2024.

\bibitem{OJMO_2025__6__A1_0}
G.~Wang and L.~Chizat.
\newblock An exponentially converging particle method for the mixed {Nash}
  equilibrium of continuous games.
\newblock {\em Open Journal of Mathematical Optimization}, 6:1--66, 2025.

\bibitem{wang2026local}
G.~Wang and L.~Chizat.
\newblock Local convergence of mean-field {L}angevin dynamics: from gradient
  flows to linearly monotone games.
\newblock {\em arXiv preprint arXiv:2602.11999}, 2026.

\end{thebibliography}

\end{document}